\newtheorem{prop}{Proposition}[section]
\newtheorem{theo}[prop]{Theorem}
\newtheorem{coro}[prop]{Corollary}
\newtheorem{lemm}[prop]{Lemma}
\def\N{\mathbb{N}}
\def\R{\mathbb{R}}
\def\E{\mathbb{E}}
\def\cX{\mathcal{X}}
\def\X{\mathcal{X}}
\def\cN{\mathcal{N}}
\def\cNstar{\mathcal{N}^*}
\def\cY{\mathcal{Y}}
\def\Pr{\mathbb{P}}
\def\0{{\bf 0}}
\def\cB{{\cal B}}
\renewcommand{\E}{\mathbb E \,}
\newcommand{\fmax}{f_{{\rm max}}}
\newcommand{\rhomax}{\rho_{{\rm max}}}
\newcommand{\rhomin}{\rho_{{\rm min}}}
\newcommand{\Cut}{{\rm Cut}}
\newcommand{\Div}{{\rm div}}
\newcommand{\Bal}{{\rm Bal}}
\newcommand{\CHE}{{\rm CHE}}
\newcommand{\MBIS}{{\rm MBIS}}
\newcommand{\eqco}{\setcounter{equation}{0}}
\newcommand{\allco}{\eqco}
\newcommand{\Vol}{{\rm Vol}}
\newcommand{\Y}{{\cal Y}}
\newcommand{\K}{{\cal K}}
\newcommand{\U}{{\cal U}}
\newcommand{\tphi}{{\tilde{\phi}}}
\newcommand{\tr}{{\tilde{r}}}
\newcommand{\tgamma}{{\tilde{\gamma}}}
\newcommand{\bvol}{{\overline{\Vol}}}
\newcommand{\uphin}{\phi_n}
\newcommand{\ophin}{\phi^{(n)}}
\newcommand{\eps}{\varepsilon}
\def\bdm{\begin{displaymath}}
\newcommand{\edm}{\end{displaymath}}
\def\benu{\begin{enumerate}}
\def\eenu{\end{enumerate}}
\def\beqn{\begin{equation}}
\def\eeqn{\end{equation}}
\def\be{\begin{equation}}
\def\ee{\end{equation}}
\def\bea{\begin{eqnarray}}
\def\eea{\end{eqnarray}}
\newcommand{\bean}{\begin{eqnarray*}}
\newcommand{\eean}{\end{eqnarray*}}
\newcommand{\bear}{\begin{eqnarray}}
\newcommand{\eear}{\end{eqnarray}}
\renewcommand{\epsilon}{\varepsilon}
\def\R{\mathbb{R}}
\def\qed{\hfill\hbox{${\vcenter{\vbox{
    \hrule height 0.4pt\hbox{\vrule width 0.4pt height 6pt
    \kern5pt\vrule width 0.4pt}\hrule height 0.4pt}}}$}}
\def\mod{{\| }}
\begin{document}
\title{\bf Optimal Cheeger cuts and bisections of random geometric graphs}

\author{
Tobias M\"uller$^{1}$
and
Mathew D. Penrose$^{2}$ \\
%\author{Mathew D. Penrose , {\normalsize{\em University of Bath}} }
{\normalsize{\em Groningen University and University of Bath}} }

 \footnotetext{ $~^1$ Johann Bernoulli Institute of Mathematics and Computer Science, Groningen University, P.O. box 407, 9700 AK Groningen, The Netherlands:
{\texttt tobias.muller@rug.nl} }
 \footnotetext{ $~^2$ Department of
Mathematical Sciences, University of Bath, Bath BA2 7AY, United
Kingdom: {\texttt m.d.penrose@bath.ac.uk} }
 \footnotetext{ $~^1$ 
Research  partially supported
by NWO grants 639.032.529 and 612.001.409}
 \footnotetext{ $~^2$ 
Research partially supported by a STAR visitor grant, by 
NWO visitor grant 040.11.532, and by the Department
of Mathematics at Utrecht University. }
\footnotetext{Key words and phrases: random geometric graph, Cheeger constant, conductance.}
\footnotetext{AMS Classifications:  05C80, 60D05, 62H30}

%Running title: {\bf } \\

%\footnotetext{ AMS classifications: 05C80, 60J85, 92D30 }

%\footnotetext{ Keywords: semi-homogeneous random digraph,
% giant component,branching process}

%\date{}
\maketitle

%\newpage
\begin{abstract}
Let $d \geq 2$. 
The Cheeger constant of a graph is the minimum
surface-to-volume ratio of all subsets of the
vertex set with relative volume at most 1/2. 
There are several ways to define
surface and volume here:
the simplest method is
% one
 %case
% with 
% are measured 
%(in the simplest definition)
to count boundary edges (for the surface) and vertices (for the volume).
%respectively.
We show that for a geometric (possibly weighted) graph on
$n$ random points in a $d$-dimensional domain
 with Lipschitz boundary and with
distance parameter decaying more slowly
(as a function of $n$) than
the connectivity threshold, the Cheeger constant
(under several possible definitions of surface and volume), also known
as conductance, suitably rescaled, 
converges for large $n$ to an analogous Cheeger-type constant
of the domain. Previously,
Garc\'ia Trillos {\em et al.} had shown this for
 $d \geq 3$
but had required an extra condition on the distance
parameter when $d=2$.  
\end{abstract}

\section{Introduction}
%\section{Introduction}

A significant recent theme in topological/geometrical data analysis and machine learning
is the reconstruction of topological/geometrical properties of
a continuous space such as a manifold from a random sample of
points in that space via a graph, or more
generally a simplicial complex, derived
from the sample by connecting nearby points; see
for example
\cite{Carlsson,Cuevas,Edelsbrunner,estperim,Kahle}. 
A prototypical graph of this type is the {\em random geometric graph},
where one connects every pair of points up to a  specified
distance $r$ apart (we shall consider generalization
of this to allow for weighted graphs).

One quantity of considerable interest in both the continuum and discrete 
settings is the {\em Cheeger constant}. For a
$d$-dimensional Euclidean 
domain $D$ (or more generally, a manifold),
 the Cheeger constant is the minimum perimeter-to-volume
ratio of all subregions of $D$ with relative volume at most $1/2$
(here, when measuring the perimeter of a subregion  of
$D$, only the part of the boundary that is interior to $D$
is included).
It can be used to provide useful bounds for the first eigenvalue gap of the
Laplacian on $D$ (with Dirichlet boundary condition) \cite{Cheeger,Buser}.
The analogous quantity for a graph (there are several possible
definitions, as we shall describe below) similarly provides
  bounds for the eigengap of
the graph Laplacian, and is therefore important in, among other things,
  the study of the mixing time of a random walk on the graph 
  (see \cite{Benjamini,Kiwi} and \cite[Chapter 2]{Chung}, for example). Cheeger constants
provide a natural measure of the quality of the partition in cluster analyisis,
and are important in graph-based spectral clustering methods \cite{vonL}.

Given the above, it is of interest mathematically, but also
from the point of view of cluster analysis and machine learning,
 to know whether one can `learn'
about the Cheeger constant of the region $D$ from that of
the random geometric graph on a sample of points in $D$. 
More formally, is the discrete Cheeger constant 
based on a geometric graph on a sample of $n$ 
random points with distance parameter $r_n$, suitably rescaled,
 a consistent estimator
of the continuum Cheeger constant? If so,  for which
choices of the sequence $(r_n)_{n \geq 1}$
 is this the case? In practical terms,
 one would like to use
small values of $r_n$ to reduce the computational cost
of computing the Cheeger constant of the graph, but
if $r_n$ is too small then the graph will not be connected
and its Cheeger constant will be zero.
At least for regions $D$ with well-behaved boundary, the 
asymptotic threshold
for connectivity is known to be at $r_n = {\rm const.} \times 
((\log n)/n)^{1/d}$ \cite{RGG,LongMST}.

Such questions were first raised and partially answered by
Arias-Castro {\em et. al.} in 
\cite{APP}.
A more complete answer was given by Garc\'ia Trillos {\em et al.}
in \cite{consistency}; they established
consistency in dimensions $d \geq 3$ for all
sequences $(r_n) $ tending to zero more slowly than
the connectivity threshold,  but
left a  gap when $d=2$, as described
 in the next section.
%(and in \cite[Remark 2]{consistency}).
In the present paper we provide an alternative proof
of consistency
% (albeit related to that in \cite{consistency})
  which fills this gap.  We also provide more detail than \cite{consistency}
for the proof in the case of some of the choices of definition
of Cheeger constant of a graph.

Our argument has the potential to 
provide a method of showing convergence
for a number of other graph optimization problems,
such as those described in \cite{VertOrd},
in the spirit of the celebrated BHH result 
%Beardwood {\em et al.}
 \cite{BHH} for the travelling salesman
problem. To illustrate
this, we also give a BHH type result for the the {\em minimal  bisection}
of a random geometric graph; that is, the partition of the vertices
into two {\em equal} pieces which minimises the total weight of cut edges. 
 Finding the minimal bisection is a classic problem in computer science with
applications, for example, in parallel processing and
Very Large Scale Integration; see for example
\cite{Bhatt,Diekmann}.

We briefly discuss some  of the sources of difficulty in these problems
and the techniques used to overcome them. 
In the bisection problem, for example, 
%It is plausible that the (asymptotically) 
%optimum bisection is obtained by finding a
%perimeter-optimal
%(in terms of perimeter)
%bisection of the domain into two complemetary pieces of
%equal size,
%% (say $A$ and $A^c$) 
%choosing one of these pieces and taking those points
%of the random sample which lie in it.
 the main difficulty is to find a 
%sharp asymptotic 
good lower bound on
the cost of {\em all} possible
bisections of the point process.
% in terms of the minimal perimeter of bisections of the domain.
%show that none of the alternatives is significantly better. 
%In \cite{consistency} this problem is addressed by using powerful
%results on Gamma-convergence  and grid matching. 
By matching the points of the sample to those of a rectangular grid with
the same number of  points in $D$,
 %Using grid matching results 
%\cite{Leighton,ShorY},
one may 
%convert each
% identify each point
%of the sample with a nearby region of space of volume $1/n$
%(where $n$ is the sample size), and thus
 identify
each such bisection with a bisection of the domain,
and  hence identify its cut weight with a suitably smoothed 
measure of the perimeter of the bisection of the domain.
Using a `liminf' Gamma-convergence bound from
\cite{cloud} one may
then asymptotically  lower bound the
cost of the point process bisection by 
the minimal perimeter of bisections of the domain.

Loosely speaking, this is the approach of \cite{consistency}. 
%(although the bisection problem is not discussed there).
Its reliance on grid matching results means that
one requires $r_n$ to be larger than the 
distances involved in the grid matching, and in $d=2$ this
is known 
%\cite{Leighton}
 to be a stronger condition than connectivity. 

Our contribution is to circumvent the need for 
any grid matching. To do this,
we use a coarser granulation of space into boxes
which are large enough for the number of points in a box to be
concentrated about its mean (but which are smaller than $r_n$).
We develop  a local optimization technique to show that
in every optimal bisection $(Y,Y^c)$ of the point set, 
each box contains mostly points of $Y$ or
 mostly points of $Y^c$  so every  optimal bisection of the point 
process may be identified with a collection of boxes whose union
bisects the domain. One may then use the Gamma-convergence
techniques as before.

\section{Statement of results}
\label{StateResult}

Let $d \in \{2,3,\ldots\} $ and let
$D \subset \R^d $ be open.
Let $(r_n)_{n \geq 1}$
be an $\R_+$-valued sequence, where $\R_+:=[0,\infty)$.
Let $X_1,X_2,\ldots$
be a sequence of independent random $d$-vectors taking values
in $D$ with common
probability density function denoted $\rho$.
 For $n \in \N$ let $\cX_n := \{X_1,\ldots,X_n\}$.
Given a function $\phi: \R_+ \to \R_+$,
let $G_\phi(\cX_n,r_n)$, the {\em $\phi$-weighted random geometric graph}
on the point set $\cX_n$ with distance
parameter $r_n$, be the complete graph on vertex set
$\cX_n$, with the weight of the  edge $\{X,Y\}$
given by $\phi(\mod Y-X\mod /r_n)$ for each $X,Y \in \cX_n$ with
$X \neq Y$,
where $\mod \cdot\mod $ denotes the Euclidean norm.

Two important special cases of $\phi$ are
\bean
%\phi_U (x) := {\bf 1}\{ \mod x\mod  \leq 1 \}; ~~~~~~
\phi_U (t) := {\bf 1}_{[0,1]}(t); ~~~~~~
\phi_N (t) := \exp(-  t^2),
%\label{phi1phi2def}
\eean
%   arises  where $\phi= \phi_U$, where
%$\phi_U$ is defined to be the indicator function of the
% Euclidean unit ball.
The graph
 $G_{\phi_U}(\cX_n)$
 amounts to the classic Euclidean (unweighted)
random geometric graph (also known as the Gilbert graph);
 see \cite{RGG} for an overview of such graphs.
%Other common choices of $\phi$ include
%the standard Gaussian density function.
The Gaussian (Normal) weight function $\phi_N$ is often used in
spectral clustering algorithms; see for example \cite{vonL}.
We shall consider a general class of $\phi$ satisfying
mild
%symmetry,
 monotonicity and integrability conditions,
which includes the two examples just mentioned.

%and let $\mod \cdot\mod $ denote the Euclidean norm.
Given $\Y \subset X_n$, set
% [** could call this $\partial_{n,\phi}$ or $\Cut_{\phi,n}$ instead]
$$
\Cut_{n,\phi}(\Y):
 = \sum_{y \in \cY} \sum_{x \in \cX_n \setminus \cY} \phi
\left( \frac{ \mod x-y\mod  }{r_n} \right),
$$
which is the total weight  in
$G_\phi(\cX_n,r_n)$ of the {\em cut edges} induced
by $\Y$, i.e. the edges  from $\cY$ to its complement.
We are interested in choosing $\cY$ to make
$\Cut_{n,\phi}(\cY)$ small but with some penalty for choices
of $\cY$ for which $\cY$ or its complement is disproportionately
small.
 This penalty takes the form of dividing
$\Cut_{n,\phi}(\cY)$ by
a `balance term' based on the
`volume' of $\cY$ and its complement, where
`volume' may be measured by counting either vertices or
(weighted) edges. Several choices of balance term have thus been
proposed in the literature, including
\bea
\Bal_{n,v,1}(\cY) :=
\frac{ \min ( \Vol_{n,v}(\cY),\Vol_{n,v}(\cX_n \setminus \cY) )
}{
\Vol_{n,v}(\X_n)}, ~~~ v\in\{1,2\},
%^{-1}
\label{1217b}
\eea
and 
%[** Trillos has extra factor of 2 in the  RHS below]
\bea
\Bal_{n,v,2}(\cY) :=  \frac{
   \Vol_{n,v}(\cY)\Vol_{n,v}(\cX_n \setminus \cY)
}{ (\Vol_{n,v}(\X_n))^{2} }, ~~~ v\in\{1,2\}
,
\label{1217c}
\eea
%where, with $\card(\cY)$ denoting the number of elements of $\cY$,
where, with $|\cY|$ denoting the number of elements of $\cY$,
 we set
$$
%\Vol_{n,1}(\cY) := \card(\cY); ~~~~
\Vol_{n,1}(\cY) := |\cY|; ~~~~
 \Vol_{n,2}(\Y) := \sum_{y \in \Y}
 \sum_{x \in \X_n \setminus \{y\}} \phi \left( \frac{ 
\mod x-y\mod }{r_n }
\right).
$$
(Some authors include an extra factor of
 2 in the right hand side of (\ref{1217c}).)
In this paper we consider the Cheeger-type functionals
\bea
\CHE_{v,b}( G_\phi(\cX_n,r_n) ): = \min_{\cY \subset \cX_n: \cY
 \neq \emptyset, \cY \neq \cX_n}
\left( \frac{\Cut_{n,\phi}(\cY) }{\Bal_{n,v,b}(\cY)} \right),~~~ (v,b)
 \in \{1,2\}^2.
\label{Chee0}
\eea
The quantity being minimized in (\ref{Chee0}) is
sometimes called the {\em Cheeger cut} of $\Y$ for
$(v,b)=(1,1)$,  the {\em ratio cut} for $(v,b)=(1,2)$,
the {\em normalized cut} for $(v,b)=(2,1)$ and the
{\em sparsest cut} for $(v,b)=(2,2)$;
 see \cite{consistency} and references therein.
The terms {\em Cheeger constant} and {\em conductance} with reference
to a graph are used with little unanimity in the literature;
 all three of
 $\CHE_{1,1}(G_{\phi}(\cX_n,r_n))$,
 $\CHE_{2,1}(G_{\phi}(\cX_n,r_n))/\Vol_{n,2}(\X_n)$
and
 $\CHE_{2,2}(G_{\phi}(\cX_n,r_n))/(\Vol_{n,2}(\X_n))^2$
could be called
the  Cheeger constant or conductance of the graph
 $G_{\phi}(\cX_n,r_n)$; see for example \cite{consistency},
\cite{APP}, \cite{Benjamini}, \cite{Kiwi}.

One may also consider continuum analogues.
Let $\cB(D)$ denote the Borel $\sigma$-field on $D$.
 Let  $\nu$ be the measure on
$(D,\cB(D))$ with Lebesgue density $\rho$ (i.e., the distribution of $X_1$).\
  For $u \in L^1(\nu)$ set
$$
TV(u):= \sup \left\{ \int_D u(x) \Div (\Phi)(x) dx:
\Phi \in C_c^1(D: \R^d), |\Phi(x)| \leq \rho^2(x) \forall x \in D \right\}.
$$
Here  $C_c^1(D:\R^d)$ denotes the class of all continuously 
differentiable functions from $D$ to $\R^d$ having support
that is compact and contained in $D$. For $\Phi = (\Phi_1,\ldots,\Phi_d)
 \in C_c^1(D:\R^d)$, and $x = (x_1,\ldots,x_d) \in D$, we define
 $\Div (\Phi)(x) = \sum_{i=1}^d \frac{\partial \Phi_i}{\partial x_i} |_x$.  
We shall assume throughout that
 $D$ is bounded and connected, and that $D$ has {\em Lipschitz boundary},
which means that each $x \in \partial D$ (the boundary of $D$)
has a neighbourhood $U$ such that the restriction of $ \partial D $ to $ U$ is the graph
of a Lipschitz function after a suitable  rotation.
We shall also assume
 that  the density $\rho: D \to \R_+ $  is  continuous
 with $\rhomax := \sup_{x \in D} \rho(x) < \infty$
and $\rhomin := \inf_{x \in D} \rho(x) > 0$.
Then, according to (3.3) of \cite{cloud}
(see also Proposition 2.33 of \cite{Henrot}),
if $u={\bf 1}_A$
is the indicator function of a set $A \subset \R^d$ with $C^1$ boundary
(defined analogously to the Lipschitz boundary above), then
\bea
TV({\bf 1}_A) = \int_{\partial A \cap D} \rho^2(x)  {\cal H}^{d-1}(dx),
\label{0102a}
\eea
where ${\cal H}^{d-1} $ is the $(d-1)$-dimensional Hausdorff measure.
More generally than (\ref{0102a}), we assert 
for all $A \in \cB(D)$ with $TV({\bf 1}_A)< \infty$
that
$$
TV({\bf 1}_A) = \int_{\partial^* A \cap D} \rho^2(x) {\cal H}^{d-1}
(dx),
$$
where $\partial^*A$ is the De Giorgi  
reduced boundary of $A$ (see \cite[Definition 1.54]{Braides}).
This assertion follows from 
(3.5) of \cite{consistency} and Theorem 1.55 of \cite{Braides}.
We define the continuum Cheeger functionals  of $(D,\rho)$  by
\bea
\CHE_{v,b}(D,\rho) :=
 \inf_{A \in \cB( D): 0 < \nu(A) < 1}
\left( \frac{TV({\bf 1}_A) }{\Bal_{\nu,v,b}(A)  } \right),
 ~~~ (v,b) \in \{1,2\}^2,
\label{Chee1}
\eea
where for $A \in \cB(D)$ we set
$$
\Bal_{\nu,v,1} (A) := \frac{\min (
 \Vol_{\nu,v}(A),1- \Vol_{\nu,v}(D \setminus A) )}{\Vol_{\nu,v}(D)} ;
$$
$$
%~~~
\Bal_{\nu,v,2} (A) :=
\frac{ \Vol_{\nu,v}(A) \Vol_{\nu,v}(D \setminus A) }{(\Vol_{\nu,v}(D))^2},
$$
%(again sometimes with an extra factor of 2 in the literature)
 with
$$
\Vol_{\nu,v} (A) := \int_A (\rho(x))^v dx, ~~~ v \in \{1,2\}.
$$
%\right)/ \int_D \rho(x)^2 dx;
%$$
%$$
%\Bal_{\nu,3} (A) := \min \left(\int_A \rho(x)^2 dx, \int_{D \setminus A} \rho(x)^2 dx
%\right)/ \int_D \rho(x)^2 dx;
%$$
Using (\ref{0102a}), it is easy to see for $(v,b) \in \{1,2\}^2$ that
 $\CHE_{v,b}(D,\rho) < \infty$.
 It is less easy  to see that
$\CHE_{v,b}(D,\rho) >0$ in general (but it is stated in
the MathSciNet review of \cite{Cheeger} that this is well-known
in the case where $\rho$ is constant, from which we can
deduce the same for general $\rho$), but we shall not use this.

It may be the case that in some circumstances,
% the case where $\rho$ is constant
%it is also stated in \cite{APP} 
%that under a $C^2$ smoothness condition on $\partial D$,
 the definition of
% $\CHE_{\nu,2,1}(D)$
 $\CHE_{v,b}(D,\rho)$
is unaffected by restricting the minimum
to sets $A$ with smooth boundary, for which
we can use the definition (\ref{0102a}) of $TV({\bf 1}_A)$.
A result along these lines
(for constant $\rho$ and under a further smoothness condition on $\partial D$)
 appears in \cite{CCN}, but to give such a result
in the generality considered here would be beyond
the scope of the present paper.

%we have not been able to find  a result
%in the generality
%
%**
%It would be good to justify this assertion more fully and
%in greater generality, but this is beyond the scope
%of the present paper.
%**

%Let $\|\cdot\|$ denote the Euclidean norm,
%Let $\langle \cdot,\cdot \rangle$ denote
%the Euclidean inner product.
% and let $o$ denote
%the origin of $\R^d$.
We shall assume $\phi$ satisfies the following conditions:
\bea
%\phi(x) \geq \phi(y) ~~~ \forall x,y \in \R^d ~{\rm with } ~
%\mod x \mod  \leq \mod y\mod ;
\phi(r) \geq \phi(s) ~~~ \forall r,s \in \R_+ ~{\rm with } ~ r \leq s ;
\label{phi1} \\
%\phi(o) >0 ~~{\rm and} ~ \phi ~
% {\rm ~ is ~ continuous ~ in ~ a ~ neighbourhood ~ of} ~ o.
\phi(0) >0 ~~{\rm and} ~ \phi ~
 %{\rm ~ is ~ continuous ~ in ~ a ~ neighbourhood ~ of} ~ 0.
 {\rm ~ is ~ continuous ~ on ~ }[0,\delta] {\rm ~for ~some ~} \delta >0;
\label{phi2} \\
\sigma_{\phi} := \int_{\R^d} \phi(\mod x\mod ) | 
%\langle x,e_1 \rangle 
x_1
|
 dx
< \infty,
\label{phi3}
\eea
%[** Would it be better to take $\sigma_\phi$ to be half this?]
%where $e_1$ is a fixed unit vector.
where here $x_1$ denotes the first co-ordinate of $x$.
The quantity $\sigma_\phi$ is sometimes called the
 `surface tension' of $\phi$. In particular
 $\sigma_{\phi_U}$ is twice the quantity denoted $\gamma_d$ in
eqn (4) of
 \cite{APP}. We have
\bea
\sigma_{\phi_U} =
 \frac{2 \pi^{(d-1)/2}}{(d+1) \Gamma((d+1)/2) };
~~~~~
\sigma_{\phi_N} =
\pi^{(d-1)/2}.
\label{0105a}
\eea
The first identity of (\ref{0105a})  is derived in
\cite{APP}, and the second is standard.
For any two $\R_+$-valued sequences $(a_n)_{n \geq 1}$ and
$(b_n)_{n \geq 1}$ we write $a_n \gg b_n$ or $b_n \ll a_n$
or $b_n = o(a_n)$
if $\lim_{n \to \infty}(b_n/a_n) =0$ (defining $0/0:= 1$).
We also sometimes write
 $b_n= O(a_n)$ if $\limsup_{n \to \infty}(b_n/a_n) < \infty$,
and write $b_n =  \Theta (a_n)$ if both $b_n = O(a_n)$ and
$a_n = O(b_n)$.
We use the term `almost surely' (or `a.s.') to denote
`with probabilty one' (rather than `with probability tending to one').
The following is our main result.
\begin{theo}
\label{mainthm}
Suppose that $D$ is a nonempty, open, bounded, connected subset of $\R^d$
 with   Lipschitz boundary, and
 %$\rho: D \to \R_+ :=[0,\infty)$
 $\rho: D \to \R_+ $
is a continuous probability density function satisfying
 $\rhomax  < \infty$
and $\rhomin  > 0$.
Suppose that $\phi$ satisfies (\ref{phi1})-(\ref{phi3}), 
and that $(r_n)_{n \geq 1}$  satisfies $nr_n^d \gg \log n$
and $r_n \ll 1$. Let
 $(v,b) \in \{1,2\}^2$. Then
\bea
\label{1011a}
\lim_{n \to \infty} \left( \frac{ 
\CHE_{v,b}(G_\phi(\cX_n,r_n)) }{n^2 r_n^{d+1} }
\right)
 =
 (\sigma_{\phi}/2) \CHE_{v,b}(D,\rho),~~~ a.s.
\eea
\end{theo}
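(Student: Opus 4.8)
The plan is to prove the two inequalities
\[
\limsup_{n\to\infty}\frac{\CHE_{v,b}(G_\phi(\cX_n,r_n))}{n^2 r_n^{d+1}}\le (\sigma_\phi/2)\CHE_{v,b}(D,\rho)
\qquad\text{and}\qquad
\liminf_{n\to\infty}\frac{\CHE_{v,b}(G_\phi(\cX_n,r_n))}{n^2 r_n^{d+1}}\ge (\sigma_\phi/2)\CHE_{v,b}(D,\rho)
\]
separately, almost surely. The upper bound is the easier half: given $\eps>0$, pick a Borel set $A\subset D$ with $0<\nu(A)<1$ realising $\CHE_{v,b}(D,\rho)$ to within $\eps$; by the representation of $TV(\1_A)$ via the reduced boundary and a standard mollification argument one may assume $\partial A\cap D$ is a smooth hypersurface, so that $TV(\1_A)=\int_{\partial A\cap D}\rho^2\,d\H^{d-1}$. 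Take $\cY_n:=\cX_n\cap A$. A direct second-moment (or concentration) computation shows that $n^{-2}r_n^{-(d+1)}\Cut_{n,\phi}(\cY_n)\to(\sigma_\phi/2)\int_{\partial A\cap D}\rho^2\,d\H^{d-1}$ a.s.: points $x,y$ on opposite sides of $\partial A$ within $O(r_n)$ contribute, and integrating $\phi(|x-y|/r_n)$ over such pairs, after rescaling, produces the surface-tension constant $\sigma_\phi$ (the factor $1/2$ coming from the fact that $\sigma_\phi$ integrates $|x_1|$ over all of $\R^d$ rather than a half-space). Meanwhile the balance terms $\Bal_{n,v,b}(\cY_n)$ converge a.s. to $\Bal_{\nu,v,b}(A)$ by the law of large numbers (for $v=1$, directly; for $v=2$, because $\Vol_{n,2}(\cY)$ is, up to the factor $n^2 r_n^d\to\infty$ relative to $n$, close to $n^2 r_n^d\int_A\rho^2\cdot(\int\phi)$ — one must track that this common factor cancels between numerator and denominator of the balance term). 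Taking $n\to\infty$ and then $\eps\to0$ gives the $\limsup$ bound.

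The lower bound is the heart of the paper and where essentially all the difficulty lies. The strategy is a $\liminf$ (or ``recovery-free'') $\Gamma$-convergence argument, but carried out \emph{without} grid matching, which is the paper's key contribution. First, fix a scale $\eps_n$ with $r_n\gg\eps_n\gg(\log n/n)^{1/d}$ (possible since $nr_n^d\gg\log n$), and tile $D$ by axis-parallel boxes of side $\eps_n$. On this scale the number of points in each box concentrates about $n\rho\cdot\eps_n^d$ uniformly over boxes, by a union bound over the $O(\eps_n^{-d})$ boxes combined with Chernoff bounds. Second — the crucial local-optimisation step — one shows that for \emph{every} optimal $\cY$ (achieving or nearly achieving $\CHE_{v,b}$), all but a vanishing fraction of the boxes are ``pure'', i.e. contain mostly $\cY$-points or mostly $\cY^c$-points. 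The mechanism: if a box contained a substantial fraction of each type, then because the box has side $\ll r_n$, every point in the box is joined by an edge of weight $\ge\phi(\text{const}\cdot\eps_n/r_n)\ge\phi(0)/2$ (using continuity and positivity of $\phi$ at $0$) to every other point in the box and in neighbouring boxes, so such a box would force $\Cut_{n,\phi}(\cY)$ to be of order $n^2\eps_n^{2d}$ per bad box from within-box edges alone; comparing with the known order $\Theta(n^2 r_n^{d+1})$ of the optimum forces the number of bad boxes to be $o(r_n^{d+1}/\eps_n^{2d})$, hence (after choosing $\eps_n$ appropriately) a vanishing fraction — and moreover one can locally repair a nearly-optimal $\cY$ by reassigning all points in each impure box to the majority side, changing $\Cut$ by a lower-order amount and changing the balance terms negligibly. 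Thus every optimal $\cY$ may be replaced, at asymptotically no cost, by one of the form $\cX_n\cap A_n$ where $A_n$ is a union of $\eps_n$-boxes.

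With optimal configurations now identified with box-unions $A_n$, the final step is to pass to the continuum. Encode $A_n$ by a function $u_n=\1_{A_n}\in L^1(\nu)$; the concentration of point counts gives $\Vol_{n,v}(\cX_n\cap A_n)\approx\Vol_{\nu,v}(A_n)$ (suitably rescaled) and hence control of the balance terms, while $n^{-2}r_n^{-(d+1)}\Cut_{n,\phi}(\cX_n\cap A_n)$ is, up to $o(1)$, a nonlocal functional of $u_n$ of exactly the type whose $\Gamma$-$\liminf$ limit is computed in \cite{cloud}: along any subsequence one extracts an $L^1$-convergent limit $u=\1_A$ with $A\in\cB(D)$, and the liminf inequality of \cite{cloud} yields
\[
\liminf_{n\to\infty}\frac{\Cut_{n,\phi}(\cX_n\cap A_n)}{n^2 r_n^{d+1}}\ge(\sigma_\phi/2)\,TV(\1_A),
\]
while $\Bal_{n,v,b}\to\Bal_{\nu,v,b}(A)$ and $0<\nu(A)<1$ (the latter because the optimal balance stays bounded away from $0$ — an a priori bound one gets from the $\limsup$ half and the positivity $\CHE_{v,b}(D,\rho)>0$, or directly). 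Dividing, $\liminf n^{-2}r_n^{-(d+1)}\CHE_{v,b}(G_\phi)\ge(\sigma_\phi/2)\,TV(\1_A)/\Bal_{\nu,v,b}(A)\ge(\sigma_\phi/2)\CHE_{v,b}(D,\rho)$, which is the desired bound. The main obstacle is unquestionably the local-optimisation/impure-box elimination step: making rigorous that \emph{every} near-optimal cut can be repaired to a box-union without increasing cost (including the bookkeeping that repairs do not destroy the balance constraint $\nu(A)$ bounded away from $0$ and $1$, and the uniform concentration over all $O(\eps_n^{-d})$ boxes), and choosing the intermediate scale $\eps_n$ so that all the error terms — bad-box count, repair cost, count concentration — are simultaneously $o(n^2 r_n^{d+1})$, is the delicate part; everything else is either the cited $\Gamma$-liminf from \cite{cloud} or routine law-of-large-numbers estimates.
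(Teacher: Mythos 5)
Your overall plan matches the paper's strategy exactly: upper bound via the explicit cut $\cY_n = \cX_n \cap A$ and Lemma \ref{thm1}, and lower bound via (i) a granulation of $D$ into boxes of side $\gamma_n r_n$ with $(\log n/n)^{1/d}\ll\gamma_n r_n\ll r_n$, (ii) concentration of point counts in boxes, (iii) a local repair step that makes all boxes essentially monochromatic, and (iv) the Gamma-liminf/compactness results of \cite{cloud} to pass to a continuum bisection of $D$. You also correctly flag the local-repair step as the crux. However, the proposal has two real gaps precisely at this crux.

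\textbf{The repair rule is wrong.} You propose reassigning all points in an impure box to the ``majority side,'' arguing this changes $\Cut$ by a lower-order amount. This need not hold: eliminating within-box cut edges saves on the order of $\ell w\,\phi(0)/2$, whereas converting the minority vertices to the majority colour can \emph{increase} the between-box cut by up to $w$ times the maximal between-box degree, and the gain does not in general dominate (consider a box that is $51\%$ black but entirely surrounded by white boxes --- the majority rule makes the cut worse). The argument that actually works (Lemma \ref{Apartheid}) is a linearity/extreme-point argument: holding the colours of all other vertices fixed, the total weight of between-box cut edges touching a given box $Q_{i,n}$ is an \emph{affine} function of the number $\ell$ of black vertices in that box, namely $\ell(w'-\ell')+m\ell'$ in the paper's notation. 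Hence the minimum over $\ell\in\{0,1,\dots,m\}$ is attained at $\ell=0$ or $\ell=m$, and at either extreme all within-box cut edges vanish too; so there is always \emph{some} all-one-colour reassignment which does not increase the (modified) cut at all, but it is not the majority side in general. Without this observation the repair step does not close, and since you yourself identify this as ``unquestionably the delicate part,'' the key idea is missing.

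\textbf{The balance/compactness bookkeeping is glossed over.} You assert that $\nu(A_n)$ stays bounded away from $0$ (and $1$) a priori, but this is exactly the degenerate regime one must rule out, and the paper does not do it by hand-waving. Two devices are used: first, the dichotomy between $n\in\cNstar$ (where $\Vol_{n,v}(\cY_n)/\Vol_{n,v}(\cX_n)\geq\gamma_n$) and $n\notin\cNstar$, with the latter treated by a ratio argument on $Z_n/\bvol_{n,v}(\cY'_n\cap B_n)$ and then shown via Lemma \ref{Npplem} to occur only finitely often; second, in the compactness step one works not with $u_n={\bf 1}_{B_n}$ but with the normalised sequence $u_n={\bf 1}_{B_n}/\Bal_{\nu,v,b}(B_n)$, which is uniformly bounded in $L^1(\nu)$ even if $\nu(B_n)$ were to vanish, and whose $TV_{\phi,\tilde r_n}$ is bounded after establishing $\Bal_{n,v,b}(\cY'_{n,B})/\Bal_{\nu,v,b}(B_n)$ is bounded (Lemmas \ref{lemBnice}, \ref{lemsubseq}). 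The subsequential limit is then automatically of the form ${\bf 1}_A/\Bal_{\nu,v,b}(A)$ with $0<\nu(A)<1$. Your sketch applies the Gamma-liminf to ${\bf 1}_{A_n}$ directly and divides by $\Bal$ afterwards, which does not obviously rule out mass escaping to a vanishing set; absent the normalisation this is a genuine gap, not just a missing footnote, particularly for $v=2$ where even showing that the discrete and continuum balance terms are comparable requires extra work (Lemmas \ref{lemtv}, \ref{lemEgoroff}).
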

%For $v=1$ 
This was already shown by Garc\'ia Trillos {\em et al.} in 
\cite[Theorem 9]{consistency}, except that
 in the case $d=2$ they require the extra condition that
 $nr_n^2 \gg (\log n)^{3/2}$; our  result  answers a question
raised in Remark 2 of \cite{consistency} as to whether we
can do without  this extra
condition (yes we can). Moreover, in \cite{consistency}
 the proof is provided 
only for the case $v=1$.
Previously Arias-Castro et al. \cite{APP} asked about
the limiting behaviour when $(v,b)=(2,1)$.
To relate the case $(v,b)=(2,1)$ of the above
 result to the limiting behaviour of the
Cheeger constant as defined in \cite{APP}, note that
as a special case of Lemma \ref{vollem} below we have
\bea
\lim_{n \to \infty} (n^2 r_n^d)^{-1} \Vol_{n,2} (\X_n) = \int_D \rho(x)^2 dx
 \int_{\R^d} \phi( \mod y \mod ) dy.
\label{RGGedgeLLN}
\eea
 The case $\phi=\phi_U$ of (\ref{RGGedgeLLN})  was proved in Theorem 3.17
of \cite{RGG}. Note that the
 right hand side of (\ref{RGGedgeLLN}) is finite by
 (\ref{phi3}) and the assumptions on $D$ and $\rho$.
% that $\rhomax < \infty$.

%In this section we seek to prove the following theorem,  under the same hypotheses as before
Our next theorem shows that under the
same hypotheses as in Theorem \ref{mainthm},
%(in particular, $n r_n^d \gg \log n$),
  the empirical measure of the optimising choice of $\cY$  in
(\ref{Chee0}) converges subsequentially
% in some sense
 to the restriction
of $\nu$ to an optimising set in the definition (\ref{Chee1}).
We use the standard notion of weak convergence
of probability measures on a metric space, as described in \cite{Bill},
for example.
Given $A \in \cB(D)$, let $\nu|_A$ denote the restriction of the measure $\nu $
to $A$, i.e. the measure on $D$ with density $\rho(\cdot) {\bf 1}_A (\cdot)$.
\begin{theo}
\label{Weaktheo}
%Let $(v,b) \in \{1,2\}^2$.
Suppose the hypotheses of Theorem \ref{mainthm} hold.
Almost surely, for any sequence of minimisers $\cY_n$
in the definition (\ref{Chee0}) of $\CHE_{v,b}(G_\phi(\cX_n,r_n))$
and any infinite $\cN \subset \N$,
 there exists an infinite
 $\cN' \subset \cN$ 
and a  minimising set $A$ in the definition (\ref{Chee1}), 
such that we have
the weak convergence of measures
\bea
\sum_{y \in \cY_n} n^{-1} \delta_{y} \to \nu|_{A}
~~~~~ {\rm as} ~~n \to \infty ~~{\rm through}~~ \cN'.
\label{1018d}
\eea
\end{theo}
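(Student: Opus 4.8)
The plan is to read Theorem~\ref{Weaktheo} off the lower-bound (``liminf'') half of the proof of Theorem~\ref{mainthm}, which already produces, along subsequences, a weak limit of the empirical measures of the discrete minimisers and identifies it with a continuum competitor. Throughout, work on the probability-one event on which the convergence~(\ref{1011a}) holds, the empirical measures $n^{-1}\sum_{i=1}^n\delta_{X_i}$ converge weakly to $\nu$, the laws of large numbers for point counts in small boxes and for the volume functionals hold (in particular~(\ref{RGGedgeLLN}) and Lemma~\ref{vollem}), and the compactness and $\Gamma$-liminf statements underpinning Theorem~\ref{mainthm} are valid; on this event the remaining assertions are deterministic. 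Fix minimisers $\cY_n$ of~(\ref{Chee0}) and an infinite $\cN\subset\N$.

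First I would note $\Cut_{n,\phi}(\cY_n)=O(n^2r_n^{d+1})$: since $\Bal_{n,v,b}(\cY_n)\le\tfrac12$ we have $\Cut_{n,\phi}(\cY_n)\le\tfrac12\,\CHE_{v,b}(G_\phi(\cX_n,r_n))$, which is $O(n^2r_n^{d+1})$ by~(\ref{1011a}). Thus the discrete total variation of $\1_{\cY_n}$ is bounded, so by the compactness that accompanies the $\Gamma$-convergence used for Theorem~\ref{mainthm} (this is where $nr_n^d\gg\log n$ enters, through concentration of box counts) there is an infinite $\cN'\subseteq\cN$ and a Borel set $A\subseteq D$ with $TV(\1_A)<\infty$ along which $\1_{\cY_n}$ converges to $\1_A$ in $TL^1$. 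Since the $\1$'s are uniformly bounded, this in particular gives the weak convergence of measures $\sum_{y\in\cY_n}n^{-1}\delta_y\to\nu|_A$ along $\cN'$, i.e.~(\ref{1018d}); it remains to check that $A$ minimises in~(\ref{Chee1}).

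For that I would combine, along $\cN'$: (a) the $\Gamma$-liminf inequality $\liminf(n^2r_n^{d+1})^{-1}\Cut_{n,\phi}(\cY_n)\ge(\sigma_\phi/2)\,TV(\1_A)$; (b) convergence of the balance terms $\Bal_{n,v,b}(\cY_n)\to\Bal_{\nu,v,b}(A)$ --- immediate from~(\ref{1018d}) when $v=1$, and when $v=2$ obtained by noting $\Cut_{n,\phi}(\cY_n)=o(n^2r_n^d)$, so that $\Vol_{n,2}(\cY_n)/(n^2r_n^d)$ and $\Vol_{n,2}(\cX_n\setminus\cY_n)/(n^2r_n^d)$ converge, via the blurring law of large numbers (Lemma~\ref{vollem}) applied with the weak limit~(\ref{1018d}), to the same constant times $\int_A\rho^2$ and $\int_{D\setminus A}\rho^2$ as appears for $\int_D\rho^2$ in~(\ref{RGGedgeLLN}), the constant cancelling in the ratio; and (c) Theorem~\ref{mainthm} itself, $(n^2r_n^{d+1})^{-1}\Cut_{n,\phi}(\cY_n)/\Bal_{n,v,b}(\cY_n)\to(\sigma_\phi/2)\CHE_{v,b}(D,\rho)$. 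If $\Bal_{\nu,v,b}(A)>0$, then (b) and (c) give $(n^2r_n^{d+1})^{-1}\Cut_{n,\phi}(\cY_n)\to(\sigma_\phi/2)\CHE_{v,b}(D,\rho)\,\Bal_{\nu,v,b}(A)$, whence (a) forces $TV(\1_A)/\Bal_{\nu,v,b}(A)\le\CHE_{v,b}(D,\rho)$; since $0<\nu(A)<1$ makes $A$ admissible in~(\ref{Chee1}), the reverse inequality is automatic, so $A$ is a minimiser.

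The main obstacle is the degenerate case $\Bal_{\nu,v,b}(A)=0$ (equivalently $\nu(A)\in\{0,1\}$): here the $\Gamma$-liminf (a) only yields $TV(\1_A)=0$ and is otherwise uninformative, because the limit set has disappeared in the rescaling. I would rule it out with an isoperimetric estimate for $\cX_n$: there is a function $f$ with $f(\eps)\to\infty$ as $\eps\downarrow0$ such that, eventually almost surely, $\Cut_{n,\phi}(\cY)\ge f(\eps)\,n^2r_n^{d+1}\,\Bal_{n,v,b}(\cY)$ for every $\cY\subseteq\cX_n$ with $\Bal_{n,v,b}(\cY)<\eps$ (heuristically, a set of $\nu$-mass $\delta$ concentrated in the bulk has perimeter $\asymp\delta^{(d-1)/d}$, so surface-to-volume ratio $\asymp\delta^{-1/d}$). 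Choosing $\eps$ with $f(\eps)>(\sigma_\phi/2)\CHE_{v,b}(D,\rho)$ and invoking~(\ref{1011a}), the minimisers must satisfy $\Bal_{n,v,b}(\cY_n)\ge\eps$ for all large $n$, so $\Bal_{\nu,v,b}(A)=\lim\Bal_{n,v,b}(\cY_n)\ge\eps>0$ and $0<\nu(A)<1$. Such a small-set bound is in any case part of the lower-bound argument for Theorem~\ref{mainthm}, so it may be taken as available. This completes the identification of $A$ as a minimiser, and hence the proof.
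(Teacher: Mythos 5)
The overall architecture of your argument---compactness giving a subsequential limit $A$, the $\Gamma$-liminf inequality pinning down $TV(\mathbf{1}_A)$, convergence of balance terms, and then the trivial inequality in~(\ref{Chee1}) forcing $A$ to be a minimiser---matches what happens in the paper, and your handling of the degenerate case $\Bal_{\nu,v,b}(A)=0$ via a small-set bound is in the right spirit (the paper does this in Lemma~\ref{Npplem}). However, there is a genuine gap at the crucial compactness step, and it is precisely the step where the paper's technical contribution lives.

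You write that ``by the compactness that accompanies the $\Gamma$-convergence used for Theorem~\ref{mainthm} \ldots $\mathbf{1}_{\cY_n}$ converges to $\mathbf{1}_A$ in $TL^1$.'' This does not follow from anything in the present paper. Lemma~\ref{LemGam}(iii) is a compactness statement for \emph{$L^1(\nu)$-valued} sequences, i.e.\ sequences of functions on the continuum domain $D$---it cannot be applied directly to $\mathbf{1}_{\cY_n}$, which is defined only on the $n$ sample points. The version of compactness that would act directly on the discrete indicator is the $TL^1$ compactness of Garc\'ia Trillos--Slep\v{c}ev/\cite{consistency}, which is built on transport maps from $\nu$ to the empirical measure; in $d=2$ those transport maps carry the very $(\log n)^{3/2}$ loss the present paper is designed to eliminate. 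So invoking $TL^1$ here reintroduces the restriction the theorem is trying to remove. What the paper actually does---and what is missing from your proposal---is the conversion of $\cY_n$ into a bona fide continuum object $B_n$ (a union of boxes) via the ``greyscale removal'' of Lemma~\ref{Apartheid}, controlled so that $n^{-1}|\cY'_n\triangle\cY_n|\ll\gamma_n^2$, after which $\mathbf{1}_{B_n}\in L^1(\nu)$ and Lemma~\ref{LemGam}(iii) becomes applicable. Without constructing this proxy (or something equivalent), there is no legitimate compactness to invoke, and the first sentence of your second paragraph is unsupported. Once $B_n$ is constructed, the identification of a limiting minimiser $A$ and the weak convergence $\sum_{y\in\cY_n}n^{-1}\delta_y\to\nu|_A$ are then extracted essentially as in the paper's proof (which shows $\mu'_n(f)\to\nu(f\mathbf{1}_A)$ directly from the box estimates and transfers from $\mu'_n$ to $\mu_n$ via~(\ref{1013e})), and your subsequent steps (b), (c) and the degeneracy argument are essentially redundant because the proof of Theorem~\ref{mainthm} already delivers $A$ as a minimiser.
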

When the minimising set $A$ is essentially unique up to
complementation, one can re-phrase the preceding result without needing to
take subsequences,
% but instead introducing a binary sequence, 
as
follows.
% the 
%The reason we need to introduce a binary sequence $j(n)$ in
%the statement of this result, is because the sets $\Y_n
%$ measure $\sum_{y \in \cY_n} n^{-1} \delta_y$
 %might alternate between approximating to $A$ and approximating
%to $A^c$.

\begin{coro}
\label{corowk}
Suppose that the hypotheses of Theorem \ref{mainthm} hold, and
also that the minimising set
$A$   in the definition (\ref{Chee1}) of $\CHE_{v,b}(D,\rho)$
is unique, up to complementation and adding or removing sets of
$(d-1)$-dimensional measure zero.

Then, almost surely, for any sequence of minimisers $\cY_n$
in the definition (\ref{Chee0}) of $\CHE_{v,b}(G_\phi(\cX_n,r_n))$
%and any infinite $\cN \subset \N$,
there exists a sequence  $(j(n), n \in \N) $ taking values in $\{0,1\}$,
such that setting $\overline{\cY}_n= \cY_n $ if $ j(n)= 1$ and
 $\overline{\cY}_n= \X_n \setminus \cY_n $ if $ j(n)=0$, 
we have
\bea
\sum_{y \in \overline{\cY}_n} n^{-1} \delta_{y} \to \nu|_{A}
~~~~~ {\rm as} ~~n \to \infty.
\label{0501a}
\eea
\end{coro}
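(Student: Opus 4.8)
The plan is to deduce the Corollary from Theorem~\ref{Weaktheo} by a standard subsequence argument; the only extra input needed is the strong law of large numbers for the full empirical measure. First I would work on the almost sure event $\Omega_0$ on which both the conclusion of Theorem~\ref{Weaktheo} holds and the empirical measures $\lambda_n := \sum_{y \in \X_n} n^{-1}\delta_y$ converge weakly to $\nu$. (The latter event has probability one: for each bounded continuous $f:\overline D\to\R$ we have $n^{-1}\sum_{i=1}^n f(X_i)\to\int f\,d\nu$ a.s.\ by the strong law, and weak convergence of probability measures on the separable space $\overline D$ is determined by countably many such $f$.) Since $D$ is bounded, $\overline D$ is compact, so the set of Borel measures on $\overline D$ of total mass at most $1$ is metrisable for the weak topology; I would fix a metric $\mathbf{d}$ inducing it. Every measure appearing below lies in this set.

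Next I would record what the uniqueness hypothesis gives. If $A'$ is any minimising set in~(\ref{Chee1}), then by hypothesis $A'$ agrees with $A$ or with $D\setminus A$ up to a set of $(d-1)$-dimensional Hausdorff measure zero; such a set has Lebesgue measure zero, hence $\nu$-measure zero, so $\nu|_{A'}\in\{\nu|_A,\ \nu|_{D\setminus A}\}$. Since $A$ and $D\setminus A$ partition $D$ we also have $\nu|_A+\nu|_{D\setminus A}=\nu$. Combining this with Theorem~\ref{Weaktheo}: for any sequence of minimisers $\cY_n$ in~(\ref{Chee0}) and any infinite $\cN\subset\N$, there is an infinite $\cN'\subset\cN$ along which $\mu_n := \sum_{y\in\cY_n} n^{-1}\delta_y$ converges weakly to $\nu|_A$ or to $\nu|_{D\setminus A}$.

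Then, fixing $\omega\in\Omega_0$ and a sequence of minimisers $\cY_n$, I would set $\mu_n^c := \lambda_n-\mu_n=\sum_{y\in\X_n\setminus\cY_n} n^{-1}\delta_y$ and define $j(n):=1$ if $\mathbf{d}(\mu_n,\nu|_A)\le\mathbf{d}(\mu_n^c,\nu|_A)$ and $j(n):=0$ otherwise, so that with $\overline{\cY}_n$ as in the statement and $\overline\mu_n := \sum_{y\in\overline{\cY}_n} n^{-1}\delta_y$ one has $\mathbf{d}(\overline\mu_n,\nu|_A)=\min(\mathbf{d}(\mu_n,\nu|_A),\mathbf{d}(\mu_n^c,\nu|_A))$. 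It then remains to show this minimum tends to $0$. If it does not, there are $\eps>0$ and an infinite $\cN\subset\N$ with $\mathbf{d}(\mu_n,\nu|_A)\ge\eps$ and $\mathbf{d}(\mu_n^c,\nu|_A)\ge\eps$ for every $n\in\cN$; applying the previous paragraph to this $\cN$ yields an infinite $\cN'\subset\cN$ along which $\mu_n$ converges weakly to $\nu|_A$ or to $\nu|_{D\setminus A}$. In the first case $\mathbf{d}(\mu_n,\nu|_A)\to0$ along $\cN'$, contradicting $\mathbf{d}(\mu_n,\nu|_A)\ge\eps$ on $\cN$. In the second case, since $\int f\,d\mu_n^c=\int f\,d\lambda_n-\int f\,d\mu_n$ for all $f\in C(\overline D)$ and $\lambda_n\to\nu$ on $\Omega_0$, we get $\mu_n^c\to\nu-\nu|_{D\setminus A}=\nu|_A$ weakly along $\cN'$, contradicting $\mathbf{d}(\mu_n^c,\nu|_A)\ge\eps$ on $\cN$. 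Either way we reach a contradiction, which proves~(\ref{0501a}).

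I do not expect a genuine obstacle here; this is essentially a soft consequence of Theorem~\ref{Weaktheo}. The closest thing to a difficulty will be two bookkeeping points: that $\mu_n$ and $\mu_n^c$ are sub-probability rather than probability measures, so one should work in the space of all finite measures on the compactification $\overline D$ rather than among probability measures; and the correct reading of the uniqueness hypothesis as the assertion that the only possible subsequential weak limits of $(\mu_n)$ are $\nu|_A$ and $\nu|_{D\setminus A}$.
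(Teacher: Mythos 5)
Your proof is correct and follows essentially the same route as the paper's: work on the a.s.\ event where Theorem~\ref{Weaktheo} holds and the full empirical measure converges to $\nu$, read the uniqueness hypothesis as forcing every subsequential limit of $\sum_{y\in\cY_n}n^{-1}\delta_y$ to be $\nu|_A$ or $\nu|_{D\setminus A}$, pick $j(n)$ by comparing (Prohorov-type) distances, and conclude via a soft subsequence/contradiction argument using that $\nu|_A+\nu|_{D\setminus A}=\nu$. The only difference is cosmetic (you compare $\mu_n$ and $\mu_n^c$ both against $\nu|_A$, the paper compares $\mu_n$ against $\nu|_A$ and $\nu|_{A^c}$), and your remark about working with sub-probability measures on $\overline D$ is a sensible point that the paper glosses over.
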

For completeness, we shall provide a proof of
Corollary \ref{corowk} at the end of Section \ref{seclo}.

The {\em Prohorov distance} on probability measures on $D$ is
a metrization of weak convergence (see \cite{Bill}).
 Another interpretation of Theorem
\ref{Weaktheo} is that, almost surely, for any sequence of minimisers
${\cal Y}_n$ the Prohorov distance from 
 $\sum_{y \in \Y_n} n^{-1} \delta_y$
to  the set of measures of the form $\nu_A$ with $A$ a minimising set
in the definition (\ref{Chee1}) of $\CHE_{v,b}(D,\rho)$, tends to zero.

A result resembling Theorem \ref{Weaktheo} is provided in
\cite[Theorem 9]{consistency}, but again under the extra condition
 $nr_n^2 \gg (\log n)^{3/2}$ when $d=2$, and again with
proofs given only for $v=1$. Also, we use a different
(and apparently simpler) notion
of weak convergence of measures than the one used there.
% (ours is the same as the one
%  used in (1.14) of \cite{consistency}).
 Both of these distinctions
are related to the fact that the proof in \cite{consistency}
proceeds via certain transportations of measures (we discuss this
further below).

%our arguments avoid these technicalities (while introducing
%some technicalities of our own elsewhere).
%[** Maybe add more about our method of proof and
%its comparison to Trillos, here or elsewhere?]

Next we describe a similar result for the minimum bisection functional
\bea
\MBIS(G_\phi(\X_n,r_n)) := 
%\min_{\{\Y \subset \X_n:\card(\cY) = \lfloor n/2 \rfloor \}}
%\min \{ \Cut_{n,\phi}(\cY): \Y \subset \X_n,\card(\cY) = \lfloor n/2 \rfloor \}
\min \{ \Cut_{n,\phi}(\cY): \Y \subset \X_n,|\cY| = \lfloor n/2 \rfloor \}
.
\label{0107a}
\eea
This functional has been considered
in \cite{Diaz}, \cite{RGG}  and elsewhere.
For  the regime considered here with $nr_n^d \gg \log n$ (in fact for a greater
range of regimes for $(r_n)$),
it was shown in \cite{VertOrd}
that for for $\phi=\phi_U$
under the additional
assumption that $D$ is the unit cube and $\nu$ is the uniform distribution
on $D$
that $\MBIS(G_{\phi_U}(\X_n,r_n))= \Theta(n^2 r_n^{d+1})$,
almost surely.
Under the further assumption that $d=2$ and using the $\ell_\infty$ distance to define
the random geometric graph, explicit upper and lower bounds are given
in \cite{Diaz} for
the limits superior and inferior of
 $\MBIS(G_{\phi_U}(\X_n,r_n))/ (n^2 r_n^{d+1})$ which  differ by a factor of 4.
In this section we give a
%Beardwood-Halton-Hammersley 
BHH-type result
for this problem (for general $d$ and $D$, using the Euclidean distance),
i.e. a strong law for
$ \MBIS(G_\phi(\cX_n,r_n)) $ in the regime
 $nr_n^d \gg \log n$. The result goes as follows.
\begin{theo}
\label{thmBISBHH}
Suppose the hypotheses of Theorem \ref{mainthm} hold.
%that $nr_n^d \gg \log n$. 
Then
\bea
\label{1018e}
\lim_{n \to \infty} \left(
 \frac{ \MBIS(G_\phi(\cX_n,r_n)) }{n^2 r_n^{d+1} } \right)
= (\sigma_{\phi}/2)
\MBIS_\nu(D),
~~~ a.s.
\eea
where we set
\bea
\MBIS_\nu(D) :=
\inf_{A \in \cB(D): \nu(A) = 1/2} TV({\bf 1}_A).  
\label{MBISDdef}
\eea
%\eea
\end{theo}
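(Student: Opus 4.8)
The plan is to prove the two inequalities
$\limsup_{n\to\infty} \MBIS(G_\phi(\cX_n,r_n))/(n^2r_n^{d+1}) \leq (\sigma_\phi/2)\,\MBIS_\nu(D)$
and
$\liminf_{n\to\infty} \MBIS(G_\phi(\cX_n,r_n))/(n^2r_n^{d+1}) \geq (\sigma_\phi/2)\,\MBIS_\nu(D)$
separately, reusing as much of the machinery built for Theorem \ref{mainthm} as possible. The key structural point is that the minimum bisection is just the Cheeger-type optimization without the balance term, but with the hard constraint $|\cY| = \lfloor n/2\rfloor$; correspondingly $\MBIS_\nu(D)$ is $TV(\1_A)$ minimized over $A$ with the hard constraint $\nu(A)=1/2$. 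So the argument should parallel the $(v,b)$ case with $v=1$, $\Vol_{n,1}(\cY)=|\cY|$, where the balance term merely forces near-equality of the parts; here we force exact equality instead.

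For the upper bound I would take a near-optimal set $A\in\cB(D)$ with $\nu(A)=1/2$ and $TV(\1_A) \leq \MBIS_\nu(D)+\eps$; by the same smoothing/approximation arguments used in the upper-bound half of Theorem \ref{mainthm} (approximating $A$ by a set with smooth boundary, using \eqref{0102a} and the law of large numbers for cut weights, i.e. the continuum limit $\Cut_{n,\phi}(\cX_n\cap A)/(n^2r_n^{d+1}) \to (\sigma_\phi/2)TV(\1_A)$ established in the proof of Theorem \ref{mainthm}), one gets a candidate vertex subset $\cY_n := \cX_n\cap A$ with $\Cut_{n,\phi}(\cY_n)/(n^2r_n^{d+1}) \to (\sigma_\phi/2)TV(\1_A)$ and $|\cY_n|/n \to 1/2$ a.s. The only extra work is to repair the size: since $|\,|\cY_n| - \lfloor n/2\rfloor\,| = o(n)$ a.s., I would move that many points from the larger side to the smaller side, choosing points to move so as to control the change in cut weight. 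Moving one vertex changes $\Cut_{n,\phi}$ by at most its weighted degree, which is $O(nr_n^d)$ a.s. uniformly over vertices (by a standard maximal-degree estimate in the regime $nr_n^d\gg\log n$), so moving $o(n)$ vertices perturbs the cut by $o(n^2r_n^d) = o(n^2r_n^{d+1})$ — negligible. Letting $\eps\downarrow0$ gives the upper bound.

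For the lower bound — which I expect to be the main obstacle — the plan is to run the local-optimization/coarse-granulation scheme described in the introduction, exactly as for Theorem \ref{mainthm} with $v=1$. Partition $D$ into boxes of side much larger than the concentration scale for point counts but much smaller than $r_n$; show that in any optimal (or near-optimal) bisection $(\cY_n,\cY_n^c)$, after a local swapping argument that does not increase the cut, each box is almost monochromatic, so the bisection is, up to $o(n)$ vertices and $o(n^2r_n^{d+1})$ in cut weight, identified with a union $A_n$ of boxes; the hard constraint $|\cY_n|=\lfloor n/2\rfloor$ together with the law of large numbers for point counts forces $\nu(A_n)\to 1/2$. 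Then apply the liminf ($\Gamma$-convergence) bound from \cite{cloud} to conclude $\liminf \Cut_{n,\phi}(\cY_n)/(n^2r_n^{d+1}) \geq (\sigma_\phi/2)\liminf TV(\1_{A_n})$; a lower-semicontinuity argument in $L^1(\nu)$ — passing to a subsequence where $\1_{A_n}\to\1_A$ in $L^1(\nu)$, which has $\nu(A)=1/2$ and $TV(\1_A)\leq\liminf TV(\1_{A_n})$ — yields $\liminf \geq (\sigma_\phi/2)\MBIS_\nu(D)$. The subtle points requiring care are: (i) $\MBIS_\nu(D)$ has no balance penalty, so there is no a priori bound preventing $A_n$ from being ``thin'', but the exact-size constraint supplies the needed compactness; (] the almost-monochromatic-box lemma must be stated for the bisection constraint rather than the balanced-Cheeger constraint, but since the local swaps preserve the total vertex count the proof carries over verbatim; and (iii) one should check that $\MBIS_\nu(D)$ is finite — it is, since $D$ is connected with Lipschitz boundary and $\rho$ is bounded, so a hyperplane cut splitting $\nu$ in half has finite weighted perimeter by \eqref{0102a}. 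Finally, since $\MBIS(G_\phi(\cX_n,r_n))$ is a minimum over a finite set for each $n$, combining the two bounds gives \eqref{1018e}.
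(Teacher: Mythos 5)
Your two-sided plan and the choice of machinery (reuse the LLN for cut weights from the upper-bound section, reuse the greyscale-removal/box-granulation scheme plus the $\Gamma$-liminf bound for the lower bound) is exactly what the paper does in Lemmas \ref{lembisub} and \ref{lembislb}. The lower-bound half of your sketch is essentially correct and matches the paper, with one small misconception noted below. However, the upper-bound half has a genuine quantitative gap.

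\medskip

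\textbf{Gap in the upper bound.} You repair the size of $\cY_n = \cX_n\cap A$ by moving $|M_n| := \bigl||\cY_n|-\lfloor n/2\rfloor\bigr|$ vertices across the cut, and you estimate the resulting change in $\Cut_{n,\phi}$ by $|M_n|\cdot O(nr_n^d)$. So far so good. But you then invoke only $|M_n|=o(n)$ (from the strong law), conclude a perturbation of $o(n^2 r_n^d)$, and write ``$o(n^2r_n^d) = o(n^2r_n^{d+1})$''. That final step is false: since $r_n\to 0$ we have $n^2 r_n^d \gg n^2 r_n^{d+1}$, so $o(n^2 r_n^d)$ does \emph{not} imply $o(n^2 r_n^{d+1})$. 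Normalizing, the requirement is $|M_n| = o(n r_n)$, and $o(n)$ is far from sufficient when $r_n\to 0$. The paper closes this exact gap by invoking the Chernoff bounds (\ref{CherUB})--(\ref{CherLB}) with Borel--Cantelli to get the much stronger bound $|M_n|\le 3(n\log n)^{1/2}$ a.s.\ eventually, and then the perturbation is
$O\bigl((n\log n)^{1/2}\cdot nr_n^d\bigr) / (n^2 r_n^{d+1}) = O\bigl((\log n/n)^{1/2}/r_n\bigr)$,
which tends to $0$ precisely because $r_n\gg(\log n/n)^{1/d}\ge(\log n/n)^{1/2}$ for $d\ge 2$. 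Note that $d\ge 2$ is used here in an essential way. Without the sub-$\sqrt{n\log n}$ fluctuation bound, your argument does not produce a negligible perturbation.

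\medskip

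\textbf{Minor point in the lower bound.} You assert that ``the local swaps preserve the total vertex count the proof carries over verbatim.'' The greyscale-removal operation of Lemma \ref{Apartheid} (making every grey box monochromatic) does \emph{not} preserve the count $|\cY_n|$: it changes the colour of all vertices in each grey box simultaneously, which can shift $|\cY_n|$ by up to the total grey-box population. The reason the argument still goes through is not count-preservation but the bound (\ref{1013e}), i.e.\ $n^{-1}|\cY'_n\triangle\cY_n|\ll\gamma_n^2$, which ensures $n^{-1}|\cY'_n|\to 1/2$ and hence $\nu(B_n)\to 1/2$. So the conclusion is right, but for a different reason than the one you give; the paper's Lemma \ref{Apartheid} is already engineered to output a \emph{near}-bisection (not an exact one), and the $\Gamma$-convergence step only needs the limiting set to satisfy $\nu(B)=1/2$.
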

One might also consider these problems for geometric graphs on
 other sequences of point process
besides the binomial point process $\cX_n$. For example, the
results should carry through if instead
of $\X_n$ one considered a Poisson point process $\X_{N_n}$ with
$N_n$ Poisson($n$) distributed and independent of
$(X_1,X_2,\ldots)$. They should carry through because the 
main probabilistic tools used are the  Chernoff bounds
(\ref{CherUB}) and (\ref{CherLB}) for the
binomial distribution, and analogous bounds
 are also available for the Poisson distribution.

Another possibility 
%(suggested by Steffen Winter)
 would be to consider instead of $\X_n$ a
deterministic rectilinear grid with spacings of size $n^{-1/d}$.
In this case one might be able to get the same results
with the condition $n r_n^d \gg \log n$ weakened to
$n r_n^d \gg 1$.
% [**Maybe this has been done - it might
%be amenable to the techniques known to Trillos {\em et al.}].
%Could also consider determinantal point processes etc.
%[**Not sure if this discussion helps sell this paper.]

Another natural extension of the results would
be to consider Riemannian  manifolds, which is
the setting of the original work of Cheeger \cite{Cheeger}.
We have not attempted this but it seems likely that our
methods can be extended to the manifold setting.
One reason to include non-uniform $\rho$ in our results is 
that this may be useful in extending them the manifold setting.

For the rest of this paper we assume that $D,\rho$ and $\phi$ satisfy
the conditions assumed in the statement of Theorem \ref{mainthm}.
We also assume that $r_n \ll 1$, and that  $nr_n^d \gg \log n$.
%up until Section
%\ref{secthermo} [** which we might drop].

Here is an overview of our method of proof of the
`liminf' part of (\ref{1011a}) and (\ref{1018d}).
We divide $D$ into boxes of side $\gamma_n r_n$, where $\gamma_n \to 0$ slowly.
 By Chernoff bounds (Lemma \ref{lemCher}),
the number of points in each box is close to 
its expected value. Given an optimal subset $\cY \subset \X_n$,
 we adjust  $\Y$ to a set $\Y' \subset \X_n$
that is not too different from $\cY$, such that
 all boxes have mostly points in $\Y'$ or
mostly points in $\X_n \setminus \Y'$, and which is also close
to optimal. Then we approximate to
$\Y$ by the union of boxes containing mostly points
of $\Y'$ and estimate the discrete cut of $\cY$ by
an approximation to the perimeter
for  this union of boxes.
We then use a Gamma-convergence result from \cite{cloud}
(Lemma \ref{LemGam} below) to derive the desired liminf inequality.

The method of \cite{consistency} is related, but
relies (via the paper \cite{cloud})
 on results of Shor {\em et al.} \cite{Leighton,ShorY},
extended in \cite{empirical}
to the class of domains considered here, on the existence 
of a matching of a grid of side $\Theta(n^{-1/d})$
 to the random point set $\X_n$,
with maximum displacement at most
 $O(((\log n)/n)^{1/d})$ (for $d \geq 3$) or 
$O(((\log n)^{3/2}/n)^{1/d})$ (for $d=2$). 
%(This is something of an over-simplification of
%the argument in \cite{consistency}, which
(It also requires a
notion of weak convergence of pairs $(\mu_n,T_n)$
where $\mu_n$ is a measure and $T_n$ a functional.)
As mentioned earlier, our method avoids relying
on grid matchings enabling us,
when $d=2$, to relax the condition
%$r_n \gg (((\log n)^{3/2}/n)^{1/d})$ 
$r_n \gg (\log n)^{3/4}n^{-1/2}$ 
(required in \cite{consistency}) to
$r_n \gg (\log n)^{1/2}n^{-1/2})$.

%Using this result, one can match each point of $\X_n$
% to a nearby `patch' of $D$ with side $n^{-1/d}$.
%Then one can associate each optimal  set $\Y \subset \X_n$,
%in terms of one of the Cheeger-type graph functionals,
%with the corresponding union of patches.
%(This is something of an over-simplification of
%the argument in \cite{consistency}, which also requires a
%notion of weak convergence of pairs $(\mu_n,T_n)$
%where $\mu_n$ is a measure and $T_n$ a functional.)
%
%
%
%Our method of proof avoids relying on the results
%in \cite{empirical,Leighton,ShorY} on existence
%of matchings; instead we are required to
%show the existence of $\cY'$ with the properties just described.
%Using a Gamma-convergence result from \cite{cloud}
%(Lemma \ref{LemGam} below),
%we ultimately deduce that  the limiting optimal discrete
%Cheege-type functional of $\cX_n$ (suitably rescaled) is bounded
% below by the corresponding continuum Cheeger functional.
%
%% This last step is similar
%%to the corresponding step in \cite{consistency}. 

We have included the cases with $v=2$ in our proofs.
This entails extra work; see for example Lemmas
\ref{lemEgoroff}  and \ref{lemBnice}.
We suspect that a similar amount of work would
be needed to fill in the details of proof for
$v=2$ of the corresponding results in  \cite{consistency}.

\section{Preliminaries}
\label{secprelim}
\allco

Since we assume that $n r_n^d \gg \log n$ and $r_n \ll 1$, we can and do choose a sequence $(\gamma_n)_{n \geq 1}$ of
constants, 
%in $(0,1/3)$,
 such that $\gamma_n \ll 1$ and also
\bea
n  \gamma_n^{d+2} r_n^d \gg \log n;     ~~~~~~~~~~~~~~
\gamma_n^{d+4} \gg r_n.
\label{gammacond}
\eea
In other words the $\gamma_n$ tend to zero, but possibly very slowly.
%In fact we think of $\gamma_n$ as being `almost constant' and often
% write $\gamma$ for $\gamma_n$ in the sequel.

 Given $n \in \N$, divide $\R^d$ into half-open rectilinear cubes
$Q'_{1,n},Q'_{2,n},\ldots$ of side $\gamma_n r_n$,
with the centre of $Q'_{i,n} $ denoted $z_{i,n}$. To be definite,
assume the origin is one of the points $z_{i,n}$.
Let $S_n := \{i \in \N: Q'_{i,n} \subset D\}$ (which is a nonempty set for large enough $n$),
and let $D_n:= \{z_{i,n}:i \in S_n\}$.

Suppose $n$ is such that $S_n  \neq \emptyset$.
For $j \in \N$,
%if $Q'_{j,n} \setminus D \neq \emptyset$,
let $i=I(j,n) \in S_n$ be chosen so that $z_{i,n}$ is the nearest point of
 $D_n$ to $z_{j,n}$, using the lexicographic ordering on
$\R^d$ to break any ties. (In particular, if $j \in S_n $ then $I(j,n) =j$.)
Then for each $i \in S_n$,  define the set
\bea
Q_{i,n}  := \cup_{\{j: i = I(j,n) \}}  (Q'_{j,n} \cap D).
\label{1018a}
\eea
That is, $Q_{i,n} $ is the union of $ Q'_{i,n}$ itself,
and those boundary cubes $Q'_{j,n}$ which have $Q'_{i,n}$ as the nearest
interior cube (intersected with $D$).
We shall refer to the sets $Q_{i,n}, i \in S_n$ as {\em boxes},
even though only those sets $Q_{i,n}$ lying
 away from the boundary of $D$ are necessarily cubes.

Since $D$ has Lipschitz boundary, using a compactness argument
 we can find  constants
$C \geq d$ and  $n_0 \in \N$
such that for all $n \geq n_0$,
 for all boundary boxes
$Q_{j,n}$ there is an interior box within distance $(C/3) \gamma_n$,
and hence
% for all $i \in S_n$ and $x,y \in  Q_{i,n} $ we have
% $\mod x-y\mod  \leq 2C \gamma r_n$.
\bea
\label{1121a}
\mod x-y\mod  \leq C \gamma_n r_n, ~~~
 \forall ~i \in S_n,  x,y \in  Q_{i,n} .
\eea
For each $n \in \N$ we define a function
 $\uphin(x,y)$ (respectively $\ophin(x,y)$) that approximate  the
 weight function
$\phi (\mod x-y \mod /r_n)$ from below (respectively, from above)
 and is constant on each product of boxes, as follows:
for each $i,j \in S_n$
set
\bea
\phi_n(x,y):=
 \inf_{x' \in Q_{i,n},y' \in Q_{j,n}} \phi( \mod x' -y' \mod /r_n) ,~~~
x \in Q_{i,n}, y \in Q_{j,n};
\label{phindef}
\\
\ophin(x,y):=
 \sup_{x' \in Q_{i,n},y' \in Q_{j,n}} \phi( \mod x' -y' \mod /r_n) ,~~~
x \in Q_{i,n}, y \in Q_{j,n}.
\label{phinupdef}
\eea
%
%We shall consider a sub-class of the between-box edges,  which we call the
% {\em nice between-box edges} and define as follows.
%Given $i,j \in \N$, let us write $i \sim_n j$ if
%$\sup_{x\in Q_{i,n},y \in Q_{j,n}} (\mod x -y\mod ) \leq r_n$.
%We say that $\{x,y\}$ is a nice between-box edge of $G(\cX_n,r_n)$ if
%$x,y$ lie in distinct boxes $Q_{i,n},Q_{j,n}$ such that $i \sim_n j$.
\begin{lemm}
\label{lemphin}
There exist
 constants $C' \in (0,\infty)$, $n_1 \geq n_0$,
and $(\tilde{\gamma}_n)_{n \in \N}$ with $\tilde{\gamma}_n
\ll 1$,
 depending only on $D$ and $\phi$,
such that for all $n \geq n_1$ and all $x,y \in D$ we have
\bea
\phi( \mod x-y \mod /r_n) \geq \phi_n(x,y) \geq
(1- \tgamma_n) \phi( \mod x-y \mod /\tr_n)
\label{1121d}
 \eea
and
\bea
\phi( \mod x-y \mod /r_n) \leq \ophin(x,y) \leq
(1 + \tgamma_n) \phi( \mod x-y \mod /r'_n),
\label{1128a}
\eea
where we set $\tr_n := (1- C' \gamma_n) r_n$ and
 $r'_n := (1+ C' \gamma_n) r_n$.
\end{lemm}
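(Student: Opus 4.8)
The plan is to prove the two chains of inequalities by a direct geometric estimate comparing Euclidean distances between points in boxes, then feeding this into the monotonicity and continuity hypotheses \eq{phi1}--\eq{phi2} on $\phi$. The leftmost inequalities in both \eq{1121d} and \eq{1128a} are immediate: by definition \eq{phindef}, $\phi_n(x,y)$ is an infimum over pairs $(x',y')$ that includes $(x,y)$ when $x \in Q_{i,n}, y \in Q_{j,n}$, so $\phi_n(x,y) \le \phi(\mod x-y\mod/r_n)$; similarly $\ophin(x,y) \ge \phi(\mod x-y\mod/r_n)$ from \eq{phinupdef}. The substance is in the second inequality of each chain.

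First I would establish the key metric fact: there is a constant $C' \in (0,\infty)$ and $n_1 \ge n_0$ such that for $n \ge n_1$, whenever $x \in Q_{i,n}$, $x' \in Q_{i,n}$, $y \in Q_{j,n}$, $y' \in Q_{j,n}$, we have
\[
\bigl| \,\mod x-y\mod - \mod x'-y'\mod\, \bigr| \le 2C\gamma_n r_n \le C'\gamma_n \cdot \mod x'-y'\mod \quad\text{if}\quad \mod x'-y'\mod \ge r_n/2,
\]
using \eq{1121a} (which gives $\mod x-x'\mod \le C\gamma_n r_n$ and $\mod y-y'\mod \le C\gamma_n r_n$) together with the triangle inequality, and separately handling the regime $\mod x'-y'\mod < r_n/2$. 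In the latter regime both $\mod x-y\mod$ and $\mod x'-y'\mod$ are $O(r_n)$, so $\phi$ evaluated at $\mod\cdot\mod/r_n$ of either is within the continuity window $[0,\delta]$ of \eq{phi2} for $n$ large (since $\gamma_n \to 0$), and one uses uniform continuity of $\phi$ on $[0,\delta]$ plus $\phi(0)>0$ to absorb the discrepancy into a multiplicative $(1 \pm \tgamma_n)$ factor. In the regime $\mod x'-y'\mod \ge r_n/2$, the displayed estimate shows $\mod x-y\mod \ge (1 - C'\gamma_n)\mod x'-y'\mod = \mod x'-y'\mod \cdot r_n/\tr_n$ (after choosing $C'$ so the arithmetic matches the definition $\tr_n = (1-C'\gamma_n)r_n$), hence $\mod x-y\mod/\tr_n \ge \mod x'-y'\mod/r_n$, and by monotonicity \eq{phi1},
\[
\phi(\mod x-y\mod/r_n) \ge \phi\bigl(\mod x'-y'\mod \cdot r_n/(r_n \tr_n) \cdot r_n\bigr)\cdots
\]
— more carefully, $\phi(\mod x'-y'\mod/r_n) \ge \phi(\mod x-y\mod/\tr_n)$ is what one wants, but for the lower bound \eq{1121d} one needs the reverse direction; I would instead run the inequality as $\mod x-y\mod \le \mod x'-y'\mod + 2C\gamma_n r_n$ and compare against $\tr_n$, taking infima appropriately.

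Taking the infimum over $x',y'$ in the above (for \eq{1121d}) and the supremum (for \eq{1128a}), and setting $\tgamma_n$ to be the maximum of $C'\gamma_n$ and the modulus-of-continuity correction from the small-distance regime — which tends to $0$ because $\gamma_n \ll 1$ — yields the stated bounds uniformly over $i,j \in S_n$ and hence over all $x,y \in D$ (every point of $D$ lies in some box, by \eq{1018a} and the definition of $S_n$ for $n$ large). The main obstacle I anticipate is the bookkeeping at the interface between the two regimes: one must choose the threshold (here $r_n/2$) and the constant $C'$ consistently so that in the "far" regime the purely multiplicative distortion of distances translates, via monotonicity alone, into the clean rescaling $r_n \mapsto \tr_n$ (resp. $r'_n$), while in the "near" regime, where $\phi$ may be flat or only continuous rather than differentiable, one leans entirely on uniform continuity on $[0,\delta]$ and the strict positivity $\phi(0)>0$ to get a relative (not just absolute) error bound. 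A secondary point is handling boundary boxes $Q_{i,n}$, which are not cubes; but \eq{1121a} was established precisely to give a uniform diameter bound $C\gamma_n r_n$ on all boxes including these, so no separate treatment is needed beyond citing it.
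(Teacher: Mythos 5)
Your approach is essentially the same as the paper's: split into a ``near'' regime where $\mod x-y\mod/r_n$ is small, handled by uniform continuity of $\phi$ in a neighbourhood of $0$, and a ``far'' regime where the absolute distance distortion $2C\gamma_n r_n$ from \eq{1121a} is a multiplicative $O(\gamma_n)$ perturbation, handled by monotonicity \eq{phi1} alone. The trivial leftmost inequalities are identified correctly. One refinement is needed, however, and it is slightly more than bookkeeping: to convert the absolute error from uniform continuity into a \emph{relative} error $(1\pm\tgamma_n)$ in the near regime, you must have $\phi$ bounded away from zero on the entire window, not merely $\phi(0)>0$. Uniform continuity on $[0,\delta]$ together with $\phi(0)>0$ is not enough if $\phi(\delta)=0$, and your proposed threshold $r_n/2$ need not even land inside $[0,\delta]$ if $\delta<1/2$. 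The paper resolves both points at once by choosing $a>0$ with $\phi$ continuous on $[0,2a]$ \emph{and} $\phi(2a)>0$ (so that, by monotonicity, $\phi\ge\phi(2a)>0$ throughout $[0,2a]$), and then splitting at $\mod x-y\mod = a r_n$ with $C'>2C/a$; you should replace your $r_n/2$ threshold by such an $a r_n$ and replace the appeal to $\phi(0)>0$ by $\phi(2a)>0$. With that amendment your argument goes through exactly as the paper's.
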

\begin{proof}
The first inequality of (\ref{1121d}) is clear from the definition
(\ref{phindef}).
To prove the second inequality,
%for $ t \in \R_+$ define $\psi(t) := \phi(te_1)$, where
%$e_1$ is a fixed unit vector.
%Observe that
observe that for any $i,j \in S_n$, for $x,x' \in Q_{i,n}$ and
$y,y' \in Q_{j,n}$, by (\ref{1121a})
we have $\mod  y'-x'\mod  \leq \mod y-x\mod + 2C \gamma_n r_n$, so that
using (\ref{phi1}) we have
\bea
\phi_n(x,y) \geq
\phi
\left( \frac{\mod x-y\mod }{r_n}
+ 2 C \gamma_n \right).
\label{1122a}
\eea
Using assumption  (\ref{phi2}), choose
 $a >0$ with
 % $\psi $
 $\phi $
continuous (and hence uniformly
continuous) on the interval $[0,2a]$ and
%$\psi(2a) >0$.
$\phi(2a) >0$.
 Then
by the uniform continuity,
there is a function $h: \R_+ \to \R_+$ with
$h(u) \to 0$ as $u \downarrow 0$ such that for
all $t,u \in [0,a]$  we have
% $\psi(t+u) \geq \psi(t) - h(u)$.
 $\phi(t+u) \geq \phi(t) - h(u)$.

For $0 \leq t \leq a$, and for $n$ large enough so that
$ \gamma_n \leq a/(2C)$, we have that
$$
\frac{\phi(t+2 C \gamma_n)}{\phi(t)} \geq
\frac{ \phi(t) - h(2C \gamma_n)}{\phi(t)}
\geq 1 - \frac{h(2 C \gamma_n)}{\phi(2a)}.
$$
Setting $\tgamma_n : = h(2C \gamma_n)/\phi(2a)$, by
(\ref{1122a}) we have for
$\mod x-y\mod  \leq a r_n$ that
\bea
\phi_n(x,y)
 \geq (1- \tgamma_n) \phi \left( \frac{\mod x-y\mod }{r_n} \right)
 \geq (1- \tgamma_n) \phi \left( \frac{\mod x-y\mod }{\tr_n} \right).
\label{1121e}
\eea
Take $C' > 2C/a$. Then
for $t > a r_n$, if $n$ is large enough so that $C' \gamma_n <1$ we have
$$
\frac{t}{ (1- C' \gamma_n)r_n} > (t/r_n)(1+ C' \gamma_n ) > (t/r_n) + a C' \gamma_n
> (t/r_n) + 2 C \gamma_n.
$$
Hence by (\ref{1122a}), for $\mod x-y\mod  > a r_n$ we have
$$
\phi_n(x,y) \geq \phi (
\mod x -y \mod /
 \tr_n
 )
\geq (1- \tgamma_n) \phi(
\mod x-y \mod
/ \tr_n
).
$$
Combined with (\ref{1121e}) for $\mod x-y\mod  \leq a r_n$, this
gives us (\ref{1121d}).

The proof of (\ref{1128a}) is similar.
\end{proof}

For $n \in \N$ and $p \in [0,1]$  let
 ${\rm Bi}(n,p)$ denote a binomial random variable
with parameters $n ,p$. Also let $H(x) = 1-x+ x \log x$ for $x >0$, and let $H(0)=1$.
The following  Chernoff-type  bounds are well-known
(see for example Lemma 1.1 of \cite{RGG}):
\bea
\Pr[ {\rm Bi}(n,p)  \geq k] \leq  \exp \left( -n p H \left( \frac{k}{np} \right)  \right), ~~~~~ k \geq np;
\label{CherUB}
\\
\Pr[ {\rm Bi}(n,p)  \leq k] \leq  \exp \left( -n p H \left( \frac{k}{np} \right)  \right), ~~~~~ k \leq np.
\label{CherLB}
\eea
\begin{lemm}
\label{lemCher}
%Let $\gamma := \gamma_n $ and $Q_{i,n}, i,n \in \N$ be as above.
There exists an almost surely finite random variable $N$
such that for all $n \geq N$ and all $i \in S_n$ we have
\bea
%\card (\cX_n \cap Q_{i,n}) & \leq & (1+\gamma_n) n \nu(Q_{i,n})
|\cX_n \cap Q_{i,n}| & \leq & (1+\gamma_n) n \nu(Q_{i,n})
\label{Cher1}
%\\
%& \leq & 2 n (\gamma  r_n)^d \rhomax \omega_d C^d,
%\label{Cher3}
\eea
and
\bea
|\cX_n \cap Q_{i,n}| \geq (1- \gamma_n)  n \nu(Q_{i,n}).
%~~~ \forall ~i~{\rm with} ~  Q_{i,n} \subset D.
\label{Cher2}
\eea
\end{lemm}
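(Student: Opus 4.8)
The plan is to control each box count using the Chernoff bounds (\ref{CherUB})--(\ref{CherLB}) applied one box at a time, and then take a union bound over $i \in S_n$ and over $n$, the summability being provided exactly by the condition $n\gamma_n^{d+2}r_n^d \gg \log n$ from (\ref{gammacond}).

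First I would record two elementary facts. Since $X_1,\ldots,X_n$ are i.i.d.\ with law $\nu$ (and $\rho$ is a density, so almost surely there are no coincidences), for each fixed $n$ and $i \in S_n$ the count $|\cX_n \cap Q_{i,n}|$ has the ${\rm Bi}(n,p_{i,n})$ distribution, where $p_{i,n} := \nu(Q_{i,n})$; write $m_{i,n} := n p_{i,n}$. Because $Q'_{i,n} \subseteq Q_{i,n}$ by (\ref{1018a}) and $Q'_{i,n}$ is a cube of side $\gamma_n r_n$, we have $p_{i,n} \geq \rhomin (\gamma_n r_n)^d$, hence $m_{i,n} \geq \rhomin\, n \gamma_n^d r_n^d$. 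Also, since the cubes $Q'_{i,n}$ with $i \in S_n$ are disjoint and contained in $D$, the number of boxes obeys $|S_n| \leq \Vol(D)(\gamma_n r_n)^{-d}$, which by (\ref{gammacond}) is $O(n/\log n) = O(n)$.

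Next, applying (\ref{CherUB}) with $k = (1+\gamma_n)m_{i,n}$ and (\ref{CherLB}) with $k = (1-\gamma_n)m_{i,n}$ bounds the probability that $|\cX_n \cap Q_{i,n}|$ violates either of (\ref{Cher1})--(\ref{Cher2}) by $\exp(-m_{i,n}H(1+\gamma_n)) + \exp(-m_{i,n}H(1-\gamma_n))$. Since $H(1\pm t) = \tfrac12 t^2 + O(t^3)$ as $t \downarrow 0$ and $\gamma_n \to 0$, for all large $n$ both $H(1+\gamma_n)$ and $H(1-\gamma_n)$ exceed $\gamma_n^2/4$; together with the lower bound on $m_{i,n}$ this gives, for large $n$ and every $i \in S_n$,
\[
\Pr\Big[\,\big||\cX_n \cap Q_{i,n}| - m_{i,n}\big| > \gamma_n m_{i,n}\,\Big] \;\leq\; 2\exp\!\Big(-\tfrac{\rhomin}{4}\, n\gamma_n^{d+2} r_n^d\Big).
\]
By the first part of (\ref{gammacond}), $n\gamma_n^{d+2}r_n^d/\log n \to \infty$, so for all large $n$ the right-hand side is at most $2n^{-4}$; summing this per-box bound over the $O(n)$ boxes in $S_n$ shows that the probability that (\ref{Cher1}) or (\ref{Cher2}) fails for \emph{some} $i \in S_n$ is $O(n^{-3})$, which is summable in $n$. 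The Borel--Cantelli lemma then yields an almost surely finite $N$ such that for all $n \geq N$ both (\ref{Cher1}) and (\ref{Cher2}) hold for every $i \in S_n$, recalling that $m_{i,n} = n\nu(Q_{i,n})$.

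The argument is routine modulo this bookkeeping; the only point requiring genuine care is the balance of exponents. The quadratic loss in $H(1\pm\gamma_n)$ combined with the $\gamma_n^d$ lower bound on the volume of a box forces the exponent $n\gamma_n^{d+2}r_n^d$ in the Chernoff estimate, and it is precisely in order to keep this quantity large compared with $\log|S_n| = \Theta(\log n)$ — so that the union bound over $i$ and over $n$ survives — that the first condition in (\ref{gammacond}) was imposed.
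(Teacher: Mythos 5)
Your proof is correct and follows essentially the same route as the paper's: Chernoff bounds (\ref{CherUB})--(\ref{CherLB}), a Taylor lower bound $H(1\pm\gamma_n)\gtrsim\gamma_n^2$, the per-box volume lower bound $\nu(Q_{i,n})\geq\rhomin(\gamma_n r_n)^d$, the first condition in (\ref{gammacond}) to make the exponent beat $\log n$, then a union bound over $|S_n|=O(n)$ boxes and Borel--Cantelli. The only differences are cosmetic constants (the paper uses $H(1+x)>x^2/3$ and gets $O(n^{-9})$ per box; you use $\gamma_n^2/4$ and get $O(n^{-4})$, both summable after the union bound).
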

\begin{proof}
By Taylor's theorem, for $x \in \R$ with $ \mod x \mod $ sufficiently small
we have that $H(1+x)> (1/3)x^2$, and hence
for large enough $n$ we have
for all $i \in S_n$
by (\ref{CherUB}) that
\bea
\Pr[ |\cX_n \cap Q_{i,n}| >  (1+\gamma_n) n \nu(Q_{i,n}) ] \leq \exp(- n \nu(Q_{i,n})  H(1+\gamma_n))
\nonumber \\
\leq \exp( - n \rhomin r_n^d \gamma_n^{d+2}/3 )
\label{1026a}
\eea
and by (\ref{gammacond}) this bound is $O(n^{-9})$. Since $D$ is bounded
and $n (\gamma_n r_n)^d \to \infty$ by (\ref{gammacond}),
we have that $|S_n| = O(n)$. Therefore it
follows by (\ref{1026a}), the union bound and the Borel-Cantelli lemma that (\ref{Cher1}) holds for
all but finitely many $n$, almost surely. The proof of (\ref{Cher2}) is similar,
this time using (\ref{CherLB}).
\end{proof}

%We use the following lemma, which is an easy consequence of
%  Chernoff-type bounds (Lemma 1.1 of \cite{RGG}), the assumptions that $D$ is bounded
%and $nr_n^d \gg \log n$,  and the Borel-Cantelli lemma.

We shall repeatedly use the following result due to
Garc\'ia Trillos and Slep\v{c}ev
\cite{cloud}.
Given $r >0$ define the functional $TV_{\phi,r}$ on $L^1(\nu)$ by
\bea
TV_{\phi,r}(u)
 := r^{-d-1} \int_D \int_D \phi \left(\frac{ \mod x-y \mod }{r}\right) |u(x) - u(y)| \nu(dx) \nu(dy),
\label{TVrdef}
\eea
 as in (1.9) on page 203 of \cite{cloud}
(see page 195 of \cite{cloud} for the definition of $\phi_r$ used there).
Note that $TV_{\phi,r} (au) = aTV_{\phi,r}(u)$ for all $a >0$.
For $u$ of the form
 $u = {\bf 1}_A$ for some $A \in \cB(D)$, 
the functional $TV_{\phi,r}(u)$ may be 
viewed as providing a smoothed measure of the perimeter of $A$.
\begin{lemm}
\label{LemGam}
 Let $(\eps_n)_{n \geq 1}$ be a
 $(0,1)$-valued sequence with $\eps_n \ll 1$. 
Then: \\
 (i) {\rm [liminf lower bound]} for any $L^1(\nu)$-valued sequence $(u_n)_{n \geq 1}$
converging in $L^1(\nu)$ to some $u  \in L^1(\nu)$,  
we have
\bea
\liminf_{n \to \infty} TV_{\phi,\eps_n} (u_n) \geq \sigma_\phi TV(u).
\label{LimInfEq}
\eea
(ii) For any $u \in L^1(\nu)$,
\bea
\lim_{n \to \infty} TV_{\phi,\eps_n} (u) = \sigma_\phi TV(u).
\label{RecovEq}
\eea
 (iii) {\rm [Compactness]}
If
$(u_n)_{n \geq 1}$
is an $L^1(\nu)$-valued sequence
that is bounded in $L^1(\nu)$, and
 $TV_{\phi,\eps_n}(u_n)$ is bounded,
then there is a subsequence $(n_k)$ along which $u_{n_k} \to u  $
in $L^1(\nu)$ for some $u \in L^1(\nu)$.
\end{lemm}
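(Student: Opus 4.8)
The plan is to derive all three parts of Lemma~\ref{LemGam} from the corresponding nonlocal-to-local $\Gamma$-convergence and compactness results of Garc\'ia Trillos and Slep\v{c}ev in \cite{cloud} (which are in the spirit of the Bourgain--Brezis--Mironescu and Ponce characterizations of $BV$, extended there to the density-weighted setting needed here), after checking that our hypotheses place us within their framework, and with a short truncation argument to cover kernels $\phi$ that are not compactly supported.

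First I would line up the objects. The functional $TV_{\phi,\eps}$ of (\ref{TVrdef}) is precisely the weighted nonlocal total variation built in \cite{cloud} from the measure $\nu$ and the rescaled kernel $x \mapsto \eps^{-d}\phi(|x|/\eps)$, and the claimed limit $\sigma_\phi\, TV$ is the associated weighted local total variation, since by the discussion around (\ref{0102a}) one has $TV(\mathbf{1}_A) = \int_{\partial^* A \cap D}\rho^2\,d\mathcal{H}^{d-1}$. Our standing assumptions --- $D$ bounded and connected with Lipschitz boundary, $\rho$ continuous with $0 < \rhomin \le \rhomax < \infty$, and $\phi$ nonincreasing (\ref{phi1}), positive and continuous near the origin (\ref{phi2}), with finite first moment (\ref{phi3}), the latter being equivalent to $\int_{\R^d}\phi(|x|)|x|\,dx < \infty$ because that integral is a positive dimensional constant times $\sigma_\phi$ after averaging over rotations --- are exactly those under which the $\Gamma$-liminf, recovery and compactness statements of \cite{cloud} are established. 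Granting this identification: part~(i) is the $\Gamma$-$\liminf$ inequality; part~(iii) is the compactness statement; and for part~(ii) one uses that the recovery sequence in \cite{cloud} may be taken to be the constant sequence $u_n \equiv u$, which gives $\limsup_n TV_{\phi,\eps_n}(u) \le \sigma_\phi\, TV(u)$; combined with part~(i) applied to the constant sequence (this also settles the case $TV(u) = \infty$, using $\sigma_\phi > 0$, which follows from (\ref{phi2})), this yields (\ref{RecovEq}).

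Second I would supply the one point that needs attention, in case the quoted results are phrased for compactly supported kernels: our $\phi$ need not be, e.g. $\phi = \phi_N$. Put $\phi^{(M)} := \phi\,\mathbf{1}_{[0,M]}$; for $M > 0$ this is nonincreasing, positive and continuous near $0$, compactly supported, and $\sigma_{\phi^{(M)}} \uparrow \sigma_\phi$ as $M \to \infty$ by monotone convergence. For part~(i), since $\phi \ge \phi^{(M)}$ we have $TV_{\phi,\eps_n}(u_n) \ge TV_{\phi^{(M)},\eps_n}(u_n)$, so the compactly supported case gives $\liminf_n TV_{\phi,\eps_n}(u_n) \ge \sigma_{\phi^{(M)}}\, TV(u)$, and letting $M \to \infty$ gives (\ref{LimInfEq}). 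For part~(iii), likewise $TV_{\phi^{(M)},\eps_n}(u_n) \le TV_{\phi,\eps_n}(u_n)$ is bounded, so the compactness statement for the compactly supported kernel $\phi^{(M)}$ (fix $M$ large enough that $\phi^{(M)} \not\equiv 0$) gives the required $L^1(\nu)$-precompactness. For the $\limsup$ half of part~(ii), when $TV(u) < \infty$ I would write $TV_{\phi,\eps_n}(u) = TV_{\phi^{(M)},\eps_n}(u) + R_{M,n}$ and bound the tail, using a standard estimate for the $L^1$-modulus of continuity of translations of a $BV$ function on the Lipschitz domain $D$, by $R_{M,n} \le C\, TV(u) \int_{|w| > M}\phi(|w|)|w|\,dw$, which is independent of $n$ and tends to $0$ as $M \to \infty$ by (\ref{phi3}); then let $n \to \infty$ using the compactly supported recovery statement for $\phi^{(M)}$, then $M \to \infty$, to obtain $\limsup_n TV_{\phi,\eps_n}(u) \le \sigma_\phi\, TV(u)$; the remaining case $TV(u) = \infty$ is trivial on the right and is covered by part~(i). (If one prefers, the passage to general $u$ can instead be made via the coarea identity $|u(x) - u(y)| = \int_\R |\mathbf{1}_{\{u > t\}}(x) - \mathbf{1}_{\{u > t\}}(y)|\,dt$, which splits both $TV_{\phi,\eps}$ and $TV$ over level sets, reducing everything to indicators.)

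The main obstacle is not conceptual --- the content is essentially a citation of \cite{cloud} --- but one of bookkeeping: verifying carefully that (\ref{phi1})--(\ref{phi3}) together with our hypotheses on $(D,\rho)$ match the precise assumptions of the theorems quoted from \cite{cloud}, and making the truncation step rigorous (in particular the translation-continuity estimate $\big|\{x : x, x+z \in D,\ \mathbf{1}_A(x) \ne \mathbf{1}_A(x+z)\}\big| = O(|z|\, TV(\mathbf{1}_A))$ for a general finite-perimeter set $A$, and --- if one uses it --- the coarea reduction). Should the results of \cite{cloud} already be stated for kernels of finite first moment, which one expects given that a condition equivalent to (\ref{phi3}) is imposed there for exactly this reason, the truncation step becomes unnecessary and one simply invokes them directly.
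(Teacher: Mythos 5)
Your proposal is correct and takes essentially the same approach as the paper, which simply cites Theorem~4.1 of \cite{cloud} for parts (i) and (iii) and Remark~4.3 of \cite{cloud} for part (ii). As you correctly anticipate in your final sentence, the hypotheses of \cite{cloud} already require only a finite first moment (equivalent to (\ref{phi3})) rather than compact support, so the truncation argument you prepared as a fallback is unnecessary and the results may be invoked directly.
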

\begin{proof}
Part (i)  is from
 Theorem 4.1 of \cite{cloud} and the definition of Gamma-convergence
(also in \cite{cloud}, for example).
Part (ii) is from Remark 4.3 of \cite{cloud}.
Part (iii) is also from Theorem 4.1 of \cite{cloud}.
\end{proof}

For the reader's convenience,
we offer the following clarifications to \cite{cloud},
kindly provided by Nicol\'as Garc\'ia Trillos.
In (4.9) of that paper, the constant $C$ needs to be
allowed to depend on $\delta$ but this does not affect the
subsequent argument there. Also, the
 first display of page 230 of \cite{cloud} is incorrect;
one can avoid needing to use this display
 by changing $\eta_{\eps}$ to $\eta_{\eps/4}$ in
(4.22) of \cite{cloud} and then changing $\eta_\eps$ to $\eta_{\eps/4}$
and $\eta_{4 \eps}$ to $\eta_\eps$ throughout page 229
of \cite{cloud}.

%[** Eventually omit this para] A `weak version' of
%Part (iii) of Lemma \ref{LemGam}
%(not requiring the above correction to
%\cite{cloud})  is obtained by restricting attention to
% $u_n$ bounded in $L^\infty(\nu)$: see Corollary 4.7
%of \cite{cloud}. We sometimes require only the
%weak version of Part (iii) but at other times
%we require the full `strong' version.

\section{Upper bound}
\label{secupper}
\allco

Throughout this section we assume $D$, $\rho$, $\phi$ and $(r_n)_{n \geq 1}$
satisfy the assumptions of  Theorem $\ref{mainthm}$.
%Also,
%all limits are as $n \to \infty$ unless stated otherwise.
%In this section
We prove the following result,
which is the easier half of Theorem \ref{mainthm}.
\begin{prop}
\label{propupper}
Given $(v,b) \in \{1,2\}^2$, we have
%Under the assumptions of Theorem \ref{mainthm},
\bea
\limsup_{n \to \infty} \left( \frac{ \CHE_{v,b}(G_\phi(\cX_n,r_n))}{
n^2 r_n^{d+1}} \right)
\leq (\sigma_\phi/2) \CHE_{v,b}(D,\rho).
%~~~ b \in \{1,2\}.
\label{1013b}
\eea
\end{prop}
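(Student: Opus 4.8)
The plan is to prove the upper bound \eqref{1013b} by exhibiting, for each fixed set $A \in \cB(D)$ with $0 < \nu(A) < 1$ and $TV(\1_A) < \infty$, a sequence of test sets $\cY_n \subset \cX_n$ whose rescaled Cheeger ratio $\Cut_{n,\phi}(\cY_n)/(n^2 r_n^{d+1} \Bal_{n,v,b}(\cY_n))$ converges almost surely to $(\sigma_\phi/2) TV(\1_A)/\Bal_{\nu,v,b}(A)$; taking the infimum over such $A$ then yields \eqref{1013b}. By a standard approximation argument (smoothing the indicator, or using the density of sets with $C^1$ boundary in the sense of strict convergence of $TV$, together with \eqref{0102a}), it suffices to treat sets $A$ with smooth boundary, or indeed one may work directly with general $A$ of finite $TV$ using part (ii) of Lemma \ref{LemGam} applied to $u = \1_A$; I would take the latter route since Lemma \ref{LemGam}(ii) is already set up for it.

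The natural choice of test set is $\cY_n := \cX_n \cap A$. First I would observe that
$$
\Cut_{n,\phi}(\cY_n) = \tfrac12 \sum_{x \in \cX_n}\sum_{y\in\cX_n\setminus\{x\}} \phi\!\left(\frac{\mod x-y\mod}{r_n}\right)\bigl|\1_A(x) - \1_A(y)\bigr|,
$$
which is exactly the empirical (double-sum) version of $r_n^{d+1} TV_{\phi,r_n}(\1_A)$ from \eqref{TVrdef}, but with the measure $\nu$ replaced by the empirical measure $n^{-1}\sum_{i\le n}\delta_{X_i}$. So the key step is a law of large numbers: almost surely,
$$
\frac{1}{n^2 r_n^{d+1}}\,\Cut_{n,\phi}(\cX_n\cap A) \;\longrightarrow\; \tfrac12\, TV_{\phi,r_n}(\1_A) \quad\text{(in the sense that the difference tends to }0\text{)},
$$
and then by Lemma \ref{LemGam}(ii), $TV_{\phi,r_n}(\1_A) \to \sigma_\phi TV(\1_A)$, giving the numerator the limit $(\sigma_\phi/2)TV(\1_A)$. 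For the denominator, the balance terms are built from $\Vol_{n,1}(\cY_n) = |\cX_n\cap A|$ and $\Vol_{n,2}$, and one checks directly by a LLN (a special case of Lemma \ref{vollem}, referenced around \eqref{RGGedgeLLN}) that $n^{-1}\Vol_{n,1}(\cX_n\cap A)\to \nu(A) = \Vol_{\nu,1}(A)$ and similarly $(n^2 r_n^d)^{-1}\Vol_{n,2}(\cX_n\cap A)\to \Vol_{\nu,2}(A)\int\phi(\mod y\mod)\,dy$ almost surely, with the normalizing constants cancelling between numerator and denominator of $\Bal_{n,v,b}$. Hence $\Bal_{n,v,b}(\cX_n\cap A) \to \Bal_{\nu,v,b}(A)$, which is strictly positive since $0<\nu(A)<1$ and $\rhomin>0$. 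Combining, the rescaled ratio for this test set converges a.s.\ to $(\sigma_\phi/2)TV(\1_A)/\Bal_{\nu,v,b}(A)$, and since $\CHE_{v,b}(G_\phi(\cX_n,r_n))$ is the minimum over all nonempty proper $\cY$, it is bounded above by this quantity for every $n$, so its limsup is at most the limit; infimizing over $A$ finishes the proof.

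The main obstacle is the concentration/LLN step for the cut, i.e.\ controlling $\bigl|\,n^{-2}r_n^{-d-1}\Cut_{n,\phi}(\cX_n\cap A) - \tfrac12 TV_{\phi,r_n}(\1_A)\,\bigr|$ uniformly enough in $n$ to get almost sure (not just in-probability) convergence, because this is a U-statistic with a kernel that depends on $n$ (through $r_n$) and is unbounded in general (only $\sigma_\phi = \int \phi(\mod x\mod)|x_1|\,dx<\infty$ is assumed, via \eqref{phi3}). I would handle this by a variance computation plus Chebyshev/Borel--Cantelli along the full sequence, using the scaling $\E[\Cut_{n,\phi}(\cX_n\cap A)] = \binom{n}{2}\cdot\tfrac{2}{n^2}\cdot n^2 r_n^{d+1} \cdot \tfrac12 TV_{\phi,r_n}(\1_A)(1+o(1))$ and bounding the variance of the double sum by $O(n^3 r_n^{2(d+1)}) + O(n^2 r_n^{d+1})$-type terms after truncating $\phi$ at a large radius (the contribution of the tail being controlled by \eqref{phi3}); since $n r_n^d \to\infty$, the relative fluctuations are summable, perhaps after passing to a subsequence and interpolating, or invoking a standard concentration inequality for U-statistics of independent variables. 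The boundary boxes $Q_{i,n}$ and the machinery of Section \ref{secprelim} are not needed here — that apparatus is for the harder lower bound — so the upper bound argument is essentially self-contained modulo Lemma \ref{LemGam}(ii) and the volume LLN.
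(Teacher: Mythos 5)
Your proposal follows essentially the same route as the paper: take $\cY_n = \cX_n\cap A$, show $(n^2 r_n^{d+1})^{-1}\Cut_{n,\phi}(\cY_n)\to(\sigma_\phi/2)TV(\1_A)$ a.s.\ via Lemma~\ref{LemGam}(ii), show $\Bal_{n,v,b}(\cY_n)\to\Bal_{\nu,v,b}(A)$ via the SLLN/Lemma~\ref{vollem}, and infimize over $A$; this is exactly Lemma~\ref{thm1} and its one-line corollary. The one substantive difference is how the almost-sure law of large numbers for the cut is established: the paper cites \cite{estperim} (Remark~1.10) and \cite{APP} and does not reprove it, whereas you sketch a self-contained variance/Chebyshev/Borel--Cantelli argument. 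Your sketch is the right idea but would need some care to close: the variance of the double sum is $O(n^3 r_n^{2d+1})+O(n^2 r_n^{d+1})$ (not $n^3 r_n^{2(d+1)}$ --- the dominant piece comes from triples $(i,j),(i,k)$ and lives in a shell of thickness $O(r_n)$ around $\partial A$), so the relative variance is $O((n r_n)^{-1})$, which for $d=2$ is roughly $n^{-1/2}(\log n)^{-1/2}$ and is \emph{not} summable; the full-sequence Borel--Cantelli therefore fails and you really do need the subsequence-plus-interpolation step you allude to (or a sharper U-statistic concentration inequality), and the truncation of $\phi$ under only (\ref{phi3}) also needs a separate bookkeeping estimate. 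Your observation that none of the Section~\ref{secprelim} box machinery is needed for the upper bound matches the paper and is correct.
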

The proof of this in the case $v=2$ requires the following result
which also justifies our earlier assertion
 (\ref{RGGedgeLLN}):
\begin{lemm}
\label{vollem}
Let $A \in \cB(D)$. Then, almost surely,
\bea
\lim_{n \to \infty} (n^2 r_n^d)^{-1} \Vol_{n,2} (\X_n \cap A) =
 \int_A \rho(x)^2 dx
 \int_{\R^d} \phi( \mod y \mod ) dy.
\label{RestrEdge}
\eea
Moreover, there exists a  constant $C'' \in (0,\infty)$
such that a.s., for all large enough $n$ and all $\cY \subset \cX_n$,
\bea
(C'')^{-1} n r_n^d | \cY| \leq
\Vol_{n,2}(\cY) \leq 
C'' n r_n^d | \cY|.
\label{1216a}
\eea
\end{lemm}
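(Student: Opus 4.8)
The plan is to handle the two assertions separately: I derive (\ref{1216a}) from uniform control of the $\phi$-weighted vertex degrees, and I derive (\ref{RestrEdge}) by a first-moment computation followed by a concentration estimate.

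For (\ref{1216a}), observe that the weighted volume is additive over $\cY$: writing $d_n(y):=\sum_{x\in\cX_n\setminus\{y\}}\phi(|x-y|/r_n)$ for the $\phi$-weighted degree of $y$ in $G_\phi(\cX_n,r_n)$, one has $\Vol_{n,2}(\cY)=\sum_{y\in\cY}d_n(y)$ for every $\cY\subseteq\cX_n$, so it suffices to find a constant $C''$ with $(C'')^{-1}nr_n^d\le d_n(y)\le C''nr_n^d$ for all $y\in\cX_n$, a.s.\ for all large $n$. I would obtain this from the Chernoff bounds (\ref{CherUB})--(\ref{CherLB}), in the style of the proof of Lemma~\ref{lemCher}. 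Conditioning on $X_i=y$, the variable $d_n(X_i)$ is a sum of $n-1$ independent $[0,\phi(0)]$-valued variables of mean $(n-1)\int_D\phi(|x-y|/r_n)\rho(x)\,dx$, and this mean is $\Theta(nr_n^d)$ uniformly in $y\in D$: the upper bound is $\rhomax nr_n^d\int_{\R^d}\phi(|z|)\,dz<\infty$ by (\ref{phi3}), and the lower bound follows because $\phi(0)>0$ and $\phi$ is continuous near $0$, so $\phi\ge\phi(0)/2$ on some $[0,\delta']$, together with the fact that a domain with Lipschitz boundary obeys a uniform interior measure-density bound $|B(y,s)\cap D|\ge c_0 s^d$ for $y\in D$ and small $s$. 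A Chernoff bound then makes $\Pr[d_n(X_i)\notin[(C'')^{-1}nr_n^d,\,C''nr_n^d]]$ at most $\exp(-c\,nr_n^d)\le n^{-10}$ for large $n$ (since $nr_n^d\gg\log n$), and a union bound over the $n$ points followed by Borel--Cantelli gives (\ref{1216a}).

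For (\ref{RestrEdge}), note first that since the $X_i$ are a.s.\ distinct, $\Vol_{n,2}(\cX_n\cap A)=\sum_{i\neq j}\1_{X_i\in A}\,\phi(|X_i-X_j|/r_n)$, so
$$\E[\Vol_{n,2}(\cX_n\cap A)]=n(n-1)\int_A\rho(y)\Big(\int_D\phi(|x-y|/r_n)\rho(x)\,dx\Big)\,dy.$$
After the substitution $x=y+r_nz$ the inner integral is $r_n^d\int_{(D-y)/r_n}\phi(|z|)\rho(y+r_nz)\,dz$; for $y$ in the interior of $D$ --- hence for Lebesgue-a.e.\ $y$, since $\partial D$ is Lebesgue-null --- the region $(D-y)/r_n$ increases to $\R^d$, and by dominated convergence with dominating function $\rhomax\phi(|z|)$ (integrable by (\ref{phi3})) it tends to $\rho(y)\int_{\R^d}\phi(|z|)\,dz$. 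A second dominated convergence in $y$ over $A$, with dominating function $\rhomax\rho(\cdot)\in L^1(A)$ since $A$ is bounded, gives $(n^2r_n^d)^{-1}\E[\Vol_{n,2}(\cX_n\cap A)]\to\int_A\rho(x)^2\,dx\int_{\R^d}\phi(|z|)\,dz$. To upgrade this to an almost-sure limit I would use a Doob martingale with a stopping time: put $\F_j=\sigma(X_1,\dots,X_j)$, $W_n=\Vol_{n,2}(\cX_n\cap A)$, $M_j=\E[W_n\mid\F_j]$, and $\tau=\inf\{j:\sum_{k<j}\phi(|X_j-X_k|/r_n)>C''nr_n^d\}$. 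Resampling the $j$-th coordinate gives $|M_j-M_{j-1}|\le 2\,\E[d_n(X_j)\mid\F_j]+2\,\E[d_n'(X_j')\mid\F_j]$, where $X_j'$ is a fresh copy of $X_j$ and $d_n'$ is the corresponding degree; integrating out $X_j'$ bounds the second term by a deterministic $O(nr_n^d)$, while the first is at most $\big(\sum_{k<j}\phi(|X_j-X_k|/r_n)\big)+O(nr_n^d)$. Hence the increments of the stopped martingale $(M_{j\wedge\tau})$ are $O(nr_n^d)$, so Azuma's inequality yields $\Pr[\,|M_{n\wedge\tau}-\E W_n|>\eps\,n^2r_n^d\,]\le 2\exp(-c\,\eps^2 n)$ --- the point being that the natural fluctuation scale $\sqrt{n}\cdot nr_n^d$ is $o(n^2r_n^d)$. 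Moreover $\{M_{n\wedge\tau}\neq W_n\}$ is contained in the event that some degree $d_n(x)$, $x\in\cX_n$, exceeds $C''nr_n^d$, which has probability $\le n^{-9}$ by the estimate in the previous paragraph; so $\sum_n\Pr[\,|W_n-\E W_n|>\eps\,n^2r_n^d\,]<\infty$ for each $\eps>0$, and Borel--Cantelli combined with the expectation asymptotics gives (\ref{RestrEdge}).

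The main obstacle is this last concentration step: the worst-case change of $\Vol_{n,2}(\cX_n\cap A)$ under moving a single point is of order $n\phi(0)$, which would only control fluctuations at scale $n^{3/2}$ and may dwarf the target scale $n^2r_n^d$, while a plain variance bound gives a relative fluctuation $(nr_n^d)^{-1/2}$ that is not summable under $nr_n^d\gg\log n$; one must therefore exploit that the weighted degrees are typically only of order $nr_n^d$ and run the martingale off the rare event where some degree is large, which is exactly where (\ref{1216a}) feeds back in. (Alternatively, for (\ref{RestrEdge}) one could sandwich $\phi$ between finite nonnegative combinations of indicators $\1_{[0,R_i]}$ and invoke, at each scaled radius $R_ir_n$, the $\phi=\phi_U$ case --- essentially Theorem~3.17 of \cite{RGG} --- then refine the approximation and control the tail of $\phi$ using (\ref{phi3}).)
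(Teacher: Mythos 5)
Your proof of (\ref{1216a}) via conditional Chernoff bounds on the weighted degrees is fine and differs from the paper's route mainly in mechanism: the paper pre-conditions on the box-count event (Lemma \ref{lemCher}), which then bounds each degree $\Vol_{n,2}(\{X\})$ deterministically by $(1\pm o(1))\,n r_n^d h_n(X)$ with $h_n$ the explicit convolution-type function in (\ref{hneq}); you instead condition on $X_i=y$ and apply Hoeffding/Chernoff to the $(n-1)$ independent bounded summands directly. Both work, and the paper's version has the advantage that it feeds into the rest of the argument.

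For (\ref{RestrEdge}) the expectation computation is correct, but the stopping-time/Azuma step has a genuine gap. Your $\tau$ is \emph{adapted} but not \emph{predictable}: the event $\{\tau\ge j\}$ (not $\{\tau>j\}$) is $\F_{j-1}$-measurable, and the stopped martingale $M_{j\wedge\tau}$ therefore includes the increment at the random step $j=\tau$, exactly the step on which $\sum_{k<j}\phi(|X_j-X_k|/r_n)>C''nr_n^d$ and on which your $O(nr_n^d)$ bound fails. Azuma needs $\F_{j-1}$-measurable increment bounds; since the bad step occurs at an unknown index, the only deterministic bound you can assert for the stopped process is the global one $O(n)$, and $\sum_j O(n)^2=O(n^3)$ does not beat the target scale $(n^2r_n^d)^2=n^4r_n^{2d}$ when $nr_n^{2d}\not\gg 1$ — which is allowed under the standing hypothesis $nr_n^d\gg\log n$ (e.g.\ $r_n^d=(\log n)^2/n$). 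Switching to Freedman's inequality does not save this directly either, as the conditional second moment $\E[(M_j-M_{j-1})^2\mid\F_{j-1}]$ still involves $\sum_{k<j}\phi(|y-X_k|/r_n)$ with no a priori uniform bound. What would work is a ``typical bounded differences'' inequality (Kutin; Warnke), or — as you yourself suggest — sandwiching $\phi$ between step functions $\sum c_i\1_{[0,R_i]}$ and invoking the $\phi_U$ case componentwise together with (\ref{phi3}), which is closest to what the literature cited ((\ref{RGGedgeLLN}) via Theorem~3.17 of \cite{RGG}) offers. The paper sidesteps the whole difficulty: after conditioning on the box counts (an event that holds a.s.\ for all large $n$), the inner sum in $\Vol_{n,2}(\X_n\cap A)=\sum_{X\in\X_n\cap A}d_n(X)$ becomes deterministic up to $1\pm o(1)$, so the randomness reduces to $n^{-1}\sum_i h_n(X_i)\1_A(X_i)$, which is handled by an ordinary (triangular-array) strong law without any martingale concentration at all.
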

\begin{proof}
Let $N$ be as in Lemma \ref{lemCher}.
Then for all $n \geq N$, using
 the first inequality of (\ref{1128a}), then 
 (\ref{Cher1})
followed by the second inequality of
 (\ref{1128a}),
we have
for all $X \in \X_n$ that
\bea
\Vol_{n,2}(\{X\}) & \leq & (1+ \gamma_n) n \int \ophin(X,y) \nu(dy)
\nonumber \\
& \leq & (1+ \gamma_n) (1 + \tgamma_n) n \int \phi \left( \frac{
\mod y-X \mod }{ r'_n }\right) \nu(dy).
\label{1128b}
\eea
Also, using (\ref{1121d}) and (\ref{Cher2}) 
we have for all $n$ large enough so that $n \geq N$
  and $\gamma_n n \nu(Q_{i,n}) \geq  1$ for all $i \in S_n$,
 and all $X \in \X_n$, that
\bea
\Vol_{n,2}(\{X\}) & \geq & (1- 2 \gamma_n) n \int \phi_n(X,y) \nu(dy)
\nonumber \\
& \geq & (1-2 \gamma_n) (1- \tgamma_n) n \int \phi \left( \frac{
 \mod y-X\mod }{ \tr_n} \right) \nu(dy).
\label{1128c}
\eea
Defining $\rho(\cdot) \equiv 0$ on $\R^d \setminus D$ and recalling
the definition of $r'_n$ from Lemma \ref{lemphin},
for $x \in D$ let us set
\bea
h_n(x) := r_n^{-d} \int_{\R^d} \phi \left(\frac{\mod y-x \mod }{ 
r'_n } \right) \rho(y) dy
%\\
= \left( \frac{ r'_n}{ r_n }\right)^d
 \int_{\R^d} \phi(\mod u\mod ) \rho(x + r'_n u) du.
\label{hneq}
\eea
 By (\ref{1128b}), it
is almost surely the case that for large enough $n$  we
have
\bea
\Vol_{n,2}(\X_n \cap A)
\leq
(1+ \tgamma_n) (1+ \gamma_n)n  \sum_{X \in \X_n \cap A}
 r_n^d h_n(X).
\label{1128e}
\eea
Since $\rho$ is continuous on $D$,
 $\rhomax < \infty$ and
 $I_\phi:= \int_{\R^d} \phi( \mod x \mod )dx $ is finite by (\ref{phi3}),
by dominated convergence we have $h_n(x) \to \rho(x) I_\phi =: h(x) $
for all $x \in D$.
Moreover $h_n(x)$ is bounded
 uniformly in $(n,x)$.
 Therefore by a version of
% dominated convergence $\E[h_n(X_1)] \to I_\phi
%\int_D \rho(x)^2 dx$, and moreover by the proof of 
the strong law of large numbers, 
%for a sequence of random variables with bounded fourth moment
%(see for example \cite[Chapter 7]{Will}), almost surely
$$
\lim_{n \to \infty}
n^{-1} \sum_{i=1}^n h_n(X_i) {\bf 1}_A(X_i) =
%\to
%\lim_{n \to \infty}
 \E[h(X_1) {\bf 1}_A(X_1)] =
 I_\phi \int_A \rho(x)^2 dx.
$$
Hence by (\ref{1128e}) we have
\bea
\limsup_{n \to \infty} n^{-2} r_n^{-d}
\Vol_{n,2}(\X_n \cap A) \leq
 I_\phi \int_A \rho(x)^2 dx.
\label{1128d}
\eea
A similar argument using
(\ref{1128c}) instead of (\ref{1128b})
shows that
$$
\liminf_{n \to \infty} n^{-2} r_n^{-d}
\Vol_{n,2}(\X_n \cap A) \geq
 I_\phi \int_A \rho(x)^2 dx,
$$
and together with (\ref{1128d}) this
gives us (\ref{RestrEdge}) as asserted.

Finally, observe that the proof of 
(\ref{1128e}) above shows also
for all $\cY \subset \X_n$
 that 
$$
\Vol_{n,2}(\cY) \leq (1+ \tgamma_n) (1+ \gamma_n) n \sum_{X \in \cY}
r_n^d h_n(X).
$$
Since $h_n$ is bounded uniformly in $x $ and $n$,
this implies the second inequality of (\ref{1216a}).
The first inequality of (\ref{1216a}) is obtained similarly
from  (\ref{1128c}): note that $h_n$ can be shown to be uniformly
bounded away from zero, using the assumptions that
 $\rhomin >0$ and $D$ has Lipschitz boundary. 
%
%we have for all $X \in \X_n$ that
%\bea
%\Vol_{n,2}(\{X\}) \leq
%(1+ \tgamma_n) (1+ \gamma_n)
%n \int_D \phi(\mod  y-X \mod/r'_n) \nu (dy)
%\label{1128b}
%\eea
%and
%\bea
%\Vol_{n,2}(\{X\}) \geq
%(1- \gamma_n)
%n \int_D \phi(\mod  y-X \mod/\tilde{r}_n) \nu (dy)
%\label{1128c}
%\eea
%
%yielding (\ref{1128b}).
%and hence (\ref{1128c}).
\end{proof}
By adapting the proof of Lemma \ref{vollem},
we can obtain the following, which will be used in Section
\ref{seclo}. The sequence $(\gamma_n)_{n \geq 1}$ is chosen as described
at the start of Section  \ref{secprelim}. We shall need the factor
of $(1-2 \gamma_n)$ (rather than just $1-\gamma_n$) in (\ref{0108d})
when we use this result later on.

\begin{lemm}
\label{lemEgoroff}
Almost surely, the following holds. For
any infinite $\cN \subset \N$ and any
 sequence $(I_n)_{n \in \cN}$ of subsets of $S_n$
such that the set $B_n:= \cup_{i \in I_n} Q_{i,n} $ satisfies
$\inf_{n \in \cN } \nu(B_n) >0$, and
any sequence $(\U_n)_{n \in \cN} $ of subsets of $\X_n \cap B_n$
with
\bea
(1- 2 \gamma_n) n \nu(Q_{i,n}) \leq |\U_n \cap Q_{i,n}| \leq 
(1 + \gamma_n) n \nu(Q_{i,n}),
 ~~~ \forall i \in I_n, n \in \cN,
\label{0108d}
\eea
it is the case that
\bea
\lim_{n \to \infty,n \in \cN} \left( \frac{\Vol_{n,v}(\U_n)
\Vol_{\nu,v}(D) }{
\Vol_{n,v}(\cX_n)
\Vol_{\nu,v}(B_n) 
 } \right) =1, ~~~ v = 1,2.
\label{0108b}
\eea
\end{lemm}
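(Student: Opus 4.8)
The plan is to prove the cases $v=1$ and $v=2$ separately, working throughout on the almost sure event on which the bounds (\ref{Cher1})--(\ref{Cher2}) of Lemma \ref{lemCher} hold for all large $n$ and the limit (\ref{RGGedgeLLN}) holds; all the remaining estimates will be deterministic and uniform over the admissible sequences $(I_n)$ and $(\U_n)$, as the statement requires. The case $v=1$ is immediate: the boxes $(Q_{i,n})_{i\in S_n}$ partition $D$, so $B_n$ is a disjoint union $\bigcup_{i\in I_n}Q_{i,n}$ and $|\U_n|=\sum_{i\in I_n}|\U_n\cap Q_{i,n}|$; summing the hypothesis (\ref{0108d}) over $i\in I_n$ gives $(1-2\gamma_n)n\nu(B_n)\leq|\U_n|\leq(1+\gamma_n)n\nu(B_n)$, and since $\Vol_{n,1}(\X_n)=n$, $\Vol_{\nu,1}(D)=1$ and $\Vol_{\nu,1}(B_n)=\nu(B_n)$, the expression inside the limit in (\ref{0108b}) equals $|\U_n|/(n\nu(B_n))\in[1-2\gamma_n,1+\gamma_n]\to 1$.

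For $v=2$ I would first localise the edge weights to boxes. Inserting $\phin(x,y)\leq\phi(|x-y|/r_n)\leq\ophin(x,y)$ into the definition of $\Vol_{n,2}(\U_n)$, and noting that adding or removing the diagonal terms changes a sum by $O(|\U_n|)=o(n^2r_n^d)$ (since $nr_n^d\to\infty$), we get
$$\sum_{y\in\U_n}\sum_{x\in\X_n}\phin(x,y)-o(n^2r_n^d)\ \leq\ \Vol_{n,2}(\U_n)\ \leq\ \sum_{y\in\U_n}\sum_{x\in\X_n}\ophin(x,y)+o(n^2r_n^d).$$
Because $\phin$ and $\ophin$ are constant on each product of boxes and the boxes partition $D$, each double sum equals $\sum_{i\in I_n}\sum_{j\in S_n}|\U_n\cap Q_{i,n}|\,|\X_n\cap Q_{j,n}|$ times the corresponding box value. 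Replacing $|\U_n\cap Q_{i,n}|$ via the hypothesis (\ref{0108d}) and $|\X_n\cap Q_{j,n}|$ via Lemma \ref{lemCher} --- this is where the factor $1-2\gamma_n$ of (\ref{0108d}) and $\gamma_n\to 0$ enter --- shows that the left-hand double sum is at least $(1-o(1))n^2\int_{B_n}\int_D\phin(x,y)\,\nu(dx)\,\nu(dy)$ and the right-hand one at most $(1+o(1))n^2\int_{B_n}\int_D\ophin(x,y)\,\nu(dx)\,\nu(dy)$. By Lemma \ref{lemphin} these are, up to a factor $1+o(1)$, equal to $\int_{B_n}\int_D\phi(|x-y|/s_n)\,\nu(dx)\,\nu(dy)$ with $s_n=\tr_n$ and $s_n=r_n'$ respectively, so in either case $s_n=(1+o(1))r_n$.

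The heart of the matter is then to show that, for any sequence $s_n$ with $s_n/r_n\to 1$ and uniformly over all $B_n\in\cB(D)$ with $\inf_n\nu(B_n)>0$,
$$\int_{B_n}\int_D\phi(|x-y|/s_n)\,\nu(dx)\,\nu(dy)\ =\ (1+o(1))\,s_n^d\,I_\phi\int_{B_n}\rho(x)^2\,dx,\qquad I_\phi:=\int_{\R^d}\phi(|u|)\,du,$$
where $I_\phi<\infty$ by (\ref{phi3}). Substituting $y=x+s_nu$ in the $y$-integration leaves, for $x\in B_n$, the quantity $s_n^d\int_{\R^d}\phi(|u|)\rho(x+s_nu)\1_D(x+s_nu)\,du$, to be integrated against $\rho(x)\,dx$ over $B_n$. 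For the upper bound I extend $\rho$ by zero off $D$ and drop the indicator; the difference between $\int_{B_n}\bigl(\int\phi(|u|)\rho(x+s_nu)\,du\bigr)\rho(x)\,dx$ and $I_\phi\int_{B_n}\rho^2$ is at most $\rhomax\int_{\R^d}\phi(|u|)\,\|\rho(\cdot+s_nu)-\rho\|_{L^1(\R^d)}\,du$, which tends to $0$ by continuity of translation in $L^1$ and dominated convergence, with a bound not depending on $B_n$. For the lower bound the indicator cannot be discarded, but since $\partial D$ is Lipschitz, $\{x\in D:{\rm dist}(x,\partial D)\leq M_ns_n\}$ has Lebesgue measure $O(M_ns_n)$; choosing $M_n\to\infty$ with $M_ns_n\to 0$ and restricting the $x$-integral to the complement of this boundary layer (whose contribution is nonnegative and may be dropped), the integrand dominates $\phi(|u|)\rho(x+s_nu)\1_{\{|u|\leq M_n\}}$, and the same $L^1$-translation estimate together with $\int_{\{|u|\leq M_n\}}\phi(|u|)\,du\to I_\phi$ give the matching lower bound; the positive lower bound $\int_{B_n}\rho^2\geq\rhomin\inf_n\nu(B_n)>0$ converts the additive $o(1)$ errors into multiplicative $(1+o(1))$ factors. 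Combining everything gives $\Vol_{n,2}(\U_n)=(1+o(1))n^2r_n^dI_\phi\Vol_{\nu,2}(B_n)$, while $\Vol_{n,2}(\X_n)=(1+o(1))n^2r_n^dI_\phi\Vol_{\nu,2}(D)$ by (\ref{RGGedgeLLN}); since $\Vol_{\nu,2}(D)\geq\rhomin>0$ and $\Vol_{\nu,2}(B_n)\geq\rhomin\inf_n\nu(B_n)>0$, the expression in (\ref{0108b}) is $1+o(1)$.

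The step I expect to be the main obstacle is precisely this uniform-in-$B_n$ control of the boundary layer: near $\partial D$ the kernel $\phi(|\cdot-y|/s_n)$ loses mass, so $s_n^{-d}\int_D\phi(|x-y|/s_n)\,\nu(dy)$ does not converge to $\rho(x)I_\phi$ uniformly in $x$, and one has to exploit that a Lipschitz boundary makes the offending region shrink in measure, which the moving cutoff $M_n\to\infty$ accomplishes. Relatedly, because $\rho$ is only assumed continuous (not uniformly continuous) on $D$, the ``pointwise convergence plus uniform boundedness plus strong law'' device used for a fixed set in the proof of Lemma \ref{vollem} must be replaced here --- either by the $L^1$-continuity-of-translation argument sketched above, or by an Egorov-type argument on $(D,\nu)$ combined with the box structure --- so as to keep the estimate uniform over the varying sets $B_n$.
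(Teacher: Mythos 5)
Your proof is correct, but it takes a genuinely different route to the $v=2$ case than the paper. Both arguments share the initial reduction: the $v=1$ case is immediate, and for $v=2$ one passes from $\Vol_{n,2}$ to box-level integrals against the measure $\nu$ via Lemma \ref{lemCher}, Lemma \ref{lemphin} and the hypothesis \eqref{0108d}; you carry this out directly by using that $\phi_n,\ophin$ are constant on products of boxes, while the paper encapsulates the same step through the box-infimum $\underline{h}_n$ of the kernel $h_n$. Where the proofs diverge is the treatment of boundary effects. The paper introduces a \emph{fixed} band $D\setminus D_\eps$, uses uniform continuity of $\rho$ on the compact set $D_{\eps/2}$ to get $\sup_{B_{n,\eps}}|h-\underline{h}_n|\to 0$, controls the contribution of $\U_n\setminus B_{n,\eps}$ via \eqref{1216a}, and only at the very end lets $\eps\downarrow 0$ (so the argument is a $\limsup/\liminf$ sandwich). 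You instead extend $\rho$ by zero off $D$ and show $\bigl|\int_{B_n}\rho(x)\int\phi(|u|)\rho(x+s_nu)\,du\,dx-I_\phi\int_{B_n}\rho^2\bigr|\le\rhomax\int\phi(|u|)\|\rho(\cdot+s_nu)-\rho\|_{L^1}\,du\to 0$ by $L^1$-continuity of translation and dominated convergence; this is a single, uniform-in-$B_n$, two-sided additive $o(1)$ estimate, with no $\eps\to 0$ passage needed. Your approach is arguably cleaner and, as you note, also dispenses with pointwise uniform continuity of $\rho$. One minor remark: the moving cutoff $M_n\to\infty$, $M_ns_n\to 0$ you introduce for the ``lower bound'' is actually redundant. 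Once $\rho$ is extended by zero off $D$, the indicator $\mathbf{1}_D(x+s_nu)$ is automatically subsumed into $\rho(x+s_nu)$, so the translation estimate above already gives the matching lower bound for free; there is no asymmetry between the upper and lower directions. This does not affect correctness---it is simply an unnecessary extra layer.
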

\begin{proof}
The result is trivial for $v=1$, so
it suffices to consider the case with $v=2$.
Given $\eps >0$, let $D_{\eps}$ denote the
set of points $x \in D$ lying at Euclidean
distance at least $\eps$ from $\R^d \setminus D$.
Then $D_{\eps/2}$ is compact so $\rho $
is uniformly  continuous on $D_{\eps/2}$. Set
$$
B_{n,\eps} : = \cup \{Q_{i,n}: i \in I_n, Q_{i,n} \cap D_{\eps} \neq \emptyset
 \}, 
$$
which is contained in $D_{\eps/2}$ for large enough $n$.

In the proof of Lemma \ref{vollem},  note that
if $X \in \X_n$ and $Q_{i,n}$ is the box containing $X$,
 then for any $z \in Q_{i,n}$  the inequalities
 (\ref{1128b}) hold with the $X$ on
the right replaced by $z$ (both times) because
the function $\ophin$ is constant on products of boxes.
Therefore setting 
$$
\underline{h}_n(x) := \inf_{z \in Q_{i,n}} h_n(z) ,~~~ x \in Q_{i,n},
$$
we have as in (\ref{1128e}) that 
\bean
\Vol_{n,2}(\U_n \cap B_{n,\eps}  ) & \leq &
(1+ \tgamma_n) (1+ \gamma_n)n r_n^d  \sum_{X \in \U_n \cap B_{n,\eps} }
  \underline{h}_n(X)
\\
& \leq & 
(1+ \tgamma_n) (1+ \gamma_n)^2n^2 r_n^d
\int_{ B_{n,\eps} } 
  \underline{h}_n(x) \nu(dx),
\eean
where the last line comes from
 (\ref{0108d}).
 
As discussed in the  proof of Lemma \ref{vollem}, we have $h_n(x) \to h(x)
:= \rho(x) I(\phi)$
for all $x \in D$, where $I(\phi):= \int \phi( \mod z \mod )dz$. Moreover, we have
the uniform convergence
%$\underline{h}_n \to h$ uniformly on $D_{\eps/2}$, and
 $\sup_{x \in B_{n,\eps}} |h(x) - \underline{h}_n(x)| \to 0$.
This can be seen from (\ref{hneq}), using the fact that $\rho$ is uniformly
continuous on $D_{\eps/2}$ (most easily by first
considering the case where $\phi$ has bounded support).
We
therefore have that
\bea
\Vol_{n,2}(\U_n \cap B_{n,\eps}   ) \leq
%(1+ \tgamma_n) (1+ \gamma_n)
%(1+o(1)) n r_n^d  \sum_{X \in \U_n \cap B_{n,\eps} } \underline{h}_n(X).
%\\
%\leq
(1+o(1)) n^2 r_n^d  \int_{B_{n,\eps} } h(x) \nu(dx) .
\label{0108a}
\eea
Since $\nu(D \setminus D_\eps) \to 0$ as $\eps \downarrow 0$,
given  $\delta >0$,
using (\ref{1216a}) 
 we may choose $\eps >0$ so that
$$
\limsup_{n \to \infty} (n^{-2}r_n^{-d} 
\Vol_{n,2} (\U_n \setminus B_{n,\eps}))
 < \delta.
$$ 
Combined with (\ref{0108a}) and using the assumption
that $\nu(B_n)$ is bounded away from zero for $n \in \N$,
 this shows that
\bea
\limsup_{n \to \infty, n \in \N} \left( \frac{\Vol_{n,2}(\U_n)}{n^2 r_n^d \int_{B_n} 
h(x)\nu (dx) } \right) \leq 1.
\label{0108c}
\eea
By a similar argument one may show an inequality  the other
way for the limit inferior, and therefore the fraction in the 
left hand side of (\ref{0108c}) actually tends to 1
as $n \to \infty$ through $\cN$. Hence by
(\ref{RGGedgeLLN}),
\bean
%\left(
 \frac{\Vol_{n,2}(\U_n)
\Vol_{\nu,2}(D) }{
\Vol_{n,2}(\cX_n)
\Vol_{\nu,2}(B_n) 
 } \sim \frac{\Vol_{n,2}(\U_n)}{n^2 r_n^d I(\phi) 
\int_{B_n}\rho(x)^2 dx
 } 
\to 1
\eean
as $n \to \infty$ through $\cN$.
Thus we have
the case $v=2$ of (\ref{0108b}).
\end{proof}

\begin{lemm}
\label{thm1}
% Suppose   $n r_n^d \gg \log n$.
Let $A \in \cB(D)$ with $0< \nu(A)<1$.
Let $(v,b) \in \{1,2\}^2$.
For $n \in \N$  set  $\cY_n := \cX_n \cap A$.
Then as $n \to \infty$,
\bean
(n^2 r_n^{d+1})^{-1}
 \left( \frac{ \Cut_{n,\phi}(\cY_n)}{ \Bal_{n,v,b}(\cY_n)  } \right)
\to \frac{(\sigma_\phi/2) TV({\bf 1}_A)}{ \Bal_{\nu,v,b}(A) } , ~~~a.s.
%\label{1013g}
\eean
\end{lemm}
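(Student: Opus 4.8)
The plan is to prove separately that $\Cut_{n,\phi}(\cY_n)/(n^2r_n^{d+1})\to(\sigma_\phi/2)\,TV({\bf 1}_A)$ and that $\Bal_{n,v,b}(\cY_n)\to\Bal_{\nu,v,b}(A)$, both almost surely, and then to divide. The balance term is the easy part. With $\cY_n=\cX_n\cap A$ one has $|\cY_n|=\sum_{i=1}^n{\bf 1}_A(X_i)$, so $|\cY_n|/n\to\nu(A)$ and $|\cX_n\setminus\cY_n|/n\to\nu(D\setminus A)$ a.s.\ by the strong law, which settles $v=1$; for $v=2$ the relevant fact $\Vol_{n,2}(\cX_n\cap B)/(n^2r_n^d)\to I_\phi\,\Vol_{\nu,2}(B)$, with $I_\phi:=\int_{\R^d}\phi(|y|)\,dy<\infty$ and $B\in\{A,\,D\setminus A,\,D\}$, is exactly $(\ref{RestrEdge})$ of Lemma~\ref{vollem}, and the common factor $n^2r_n^dI_\phi$ cancels in the ratios defining $\Bal_{n,2,b}$. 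In every case the limit $\Bal_{\nu,v,b}(A)$ is strictly positive, because $0<\nu(A)<1$ together with $\rho\ge\rhomin>0$ forces both $\Vol_{\nu,v}(A)>0$ and $\Vol_{\nu,v}(D\setminus A)>0$.

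For the numerator I would begin with the mean. Since $\cY_n$ and $\cX_n\setminus\cY_n$ are disjoint, $\Cut_{n,\phi}(\cY_n)=\sum_{i\ne j}{\bf 1}\{X_i\in A\}\,{\bf 1}\{X_j\notin A\}\,\phi(|X_i-X_j|/r_n)$, and hence, by independence of the $X_i$,
$$\E\Cut_{n,\phi}(\cY_n)=n(n-1)\int_A\int_{D\setminus A}\phi(|x-y|/r_n)\,\nu(dx)\,\nu(dy)=\frac{n(n-1)}{2}\,r_n^{d+1}\,TV_{\phi,r_n}({\bf 1}_A),$$
the last equality being the definition $(\ref{TVrdef})$ of $TV_{\phi,r}$ specialised to $u={\bf 1}_A$ (for which $|{\bf 1}_A(x)-{\bf 1}_A(y)|=1$ exactly when precisely one of $x,y$ lies in $A$). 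Since $r_n\ll1$ and ${\bf 1}_A\in L^1(\nu)$, Lemma~\ref{LemGam}(ii) gives $TV_{\phi,r_n}({\bf 1}_A)\to\sigma_\phi\,TV({\bf 1}_A)$ (finite or $+\infty$), so that $\E\Cut_{n,\phi}(\cY_n)/(n^2r_n^{d+1})\to(\sigma_\phi/2)\,TV({\bf 1}_A)$.

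The main work is then the almost-sure bound $\Cut_{n,\phi}(\cY_n)-\E\Cut_{n,\phi}(\cY_n)=o(n^2r_n^{d+1})$. I would expose $X_1,\dots,X_n$ one at a time and apply Azuma's inequality to the martingale $M_k:=\E[\Cut_{n,\phi}(\cX_n\cap A)\mid X_1,\dots,X_k]$. A short calculation gives
$$M_k-M_{k-1}=\sum_{i<k}\bigl(\psi_n(X_i,X_k)-\beta_n(X_i)\bigr)+(n-k)\bigl(\beta_n(X_k)-\E\beta_n(X_1)\bigr),$$
where $\psi_n(x,y):={\bf 1}\{{\bf 1}_A(x)\ne{\bf 1}_A(y)\}\,\phi(|x-y|/r_n)$ and $\beta_n(x):=\int_D\psi_n(x,y)\,\nu(dy)$. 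As $\|\beta_n\|_\infty\le\rhomax r_n^dI_\phi$, while $\sum_{i<k}\psi_n(X_i,X_k)\le\sum_{i\ne k}\phi(|X_i-X_k|/r_n)=\Vol_{n,2}(\{X_k\})\le C''nr_n^d$ for all large $n$ almost surely by $(\ref{1216a})$, it follows that, almost surely, $|M_k-M_{k-1}|\le Cnr_n^d$ for every $k$ once $n$ is large. Azuma's inequality, in the form tolerating an exceptional event (here the event, of probability at most $ne^{-c'nr_n^d}$ by the Chernoff bounds $(\ref{CherUB})$--$(\ref{CherLB})$, that some vertex of $\cX_n$ has weighted degree exceeding $C''nr_n^d$), then yields
$$\Pr\bigl(|\Cut_{n,\phi}(\cY_n)-\E\Cut_{n,\phi}(\cY_n)|>\eps n^2r_n^{d+1}\bigr)\le 2e^{-c\eps^2nr_n^2}+ne^{-c'nr_n^d}$$
for suitable $c,c'>0$. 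Since $nr_n^d\gg\log n$ (and, because $d\ge2$ and $r_n\ll1$, also $nr_n^2\ge nr_n^d\gg\log n$), both terms are summable in $n$, so Borel--Cantelli completes the estimate. (Two alternatives: split $\sum_{i\ne j}\psi_n(X_i,X_j)$ by its Hoeffding decomposition and bound the linear part by Bernstein's inequality and the degenerate part by a Bernstein-type inequality for degenerate $U$-statistics; or, when $TV({\bf 1}_A)<\infty$, sandwich $\phi(|\cdot|/r_n)$ between the box-constant functions $\phi_n$ and $\phi^{(n)}$ of Lemma~\ref{lemphin} and apply Lemmas~\ref{lemCher} and~\ref{LemGam}(ii), after checking that the boxes meeting $\partial A$ contribute $o(n^2r_n^{d+1})$.) Combining the mean with this estimate gives $\Cut_{n,\phi}(\cY_n)/(n^2r_n^{d+1})\to(\sigma_\phi/2)\,TV({\bf 1}_A)$ almost surely --- with divergence to $+\infty$ when $TV({\bf 1}_A)=\infty$, since the deterministic mean already diverges while the fluctuation is $o(n^2r_n^{d+1})$ --- and dividing by $\Bal_{n,v,b}(\cY_n)\to\Bal_{\nu,v,b}(A)>0$ gives the lemma.

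I expect the concentration estimate to be the only real difficulty: one needs an almost-sure (not merely in-probability) statement in a regime where $r_n$ may be barely above the connectivity threshold, so that even the fluctuation scale $n^2r_n^{d+1}$ can be $o(n^2)$. What makes Azuma's inequality usable is that the martingale increments are of order $nr_n^d$ rather than $n$ --- equivalently, that each vertex has weighted degree $O(nr_n^d)$ --- which is precisely what $(\ref{1216a})$, together with the Chernoff bounds, supplies.
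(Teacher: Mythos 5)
Your computation of the mean and the reduction to $TV_{\phi,r_n}({\bf 1}_A)$ and Lemma~\ref{LemGam}(ii) are correct, as is the treatment of the balance term via the strong law for $v=1$ and Lemma~\ref{vollem} for $v=2$. Your route for the almost-sure concentration is, however, genuinely different from the paper's: the paper does not prove concentration at all but cites \cite{estperim} (and, for a narrower range of $r_n$, \cite{APP}) for the almost-sure statement, whereas you give a self-contained Doob-martingale\slash Azuma argument with exceptional events. What that buys you is an explicit, checkable argument not depending on an external reference; the price is that the details of the ``exceptional event'' bookkeeping and the degree-tail estimate have to be supplied, and there are two places where care is needed. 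First, the deterministic bound on the martingale difference $|M_k - M_{k-1}|$ is driven solely by the term $\sum_{i<k}\psi_n(X_i,X_k)$ (the $\beta_n$ contributions are deterministically $O(nr_n^d)$ because $\|\beta_n\|_\infty\le\rhomax r_n^d I_\phi$); to get the $O(nr_n^d)$ bound on the first term you need a \emph{per-$n$} probability bound on the event that some vertex has weighted degree exceeding $C'' n r_n^d$, not merely the a.s.-eventual bound that $(\ref{1216a})$ gives. For $\phi = \phi_U$ this follows from $(\ref{CherUB})$, since the degree is binomial; for general $\phi$ satisfying $(\ref{phi1})$--$(\ref{phi3})$ it does not, because the degree is a bounded i.i.d.\ sum but not a count, and Hoeffding gives only $\exp(-cnr_n^{2d})$, which is not summable. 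You need a Bernstein bound (each summand has variance $O(r_n^d)$, so the total variance is $O(nr_n^d)$ and Bernstein yields $\exp(-\Theta(nr_n^d))$), or a truncation of $\phi$ to bounded support plus a separate estimate for the tail. Second, the ``Azuma with exceptional events'' step should be stated in a form that is actually known to hold---for instance via the stopped martingale $\tilde M_k := M_{k\wedge\tau}$ where $\tau$ is the first index $k$ at which the increment bound fails, noting $\tilde M_n = M_n$ off the bad event and the increments of $\tilde M$ are deterministically bounded; one should check that the bad event can indeed be written as a union over the filtration of the events measured here. Your observation that $nr_n^2 \ge nr_n^d \gg \log n$ when $d\ge 2$ and $r_n\ll1$, needed for summability of the Azuma tail at scale $\varepsilon n^2 r_n^{d+1}$, is correct and necessary; it pins down exactly why the argument works for $d\ge 2$. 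With those two gaps closed your proof would be complete and would in fact give a direct derivation of exactly the part of the lemma that the paper outsources.
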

\begin{proof}
%For $b \in \{1,2\}$
By the strong law of large numbers (for $v=1$) or
by Lemma \ref{vollem} (for $v=2$),
\bean
\lim_{n \to \infty}  \Bal_{n,v,b}(\cY_n) = \Bal_{\nu,v,b}(A),
~~~ (v,b) \in \{1,2\}^2.
%\label{1128f}
\eean
%This also holds for $v=2$ by the following
%generalization of (\ref{RGGedgeLLN}):
%
Therefore it suffices to show that
\bea
(n^2 r_n^{d+1})^{-1}
  \Cut_{n,\phi}(\cY_n)
\to (\sigma_\phi/2) TV({\bf 1}_A)
, ~~~a.s.
\label{1103a}
\eea
The convergence of expectations corresponding to (\ref{1103a}) follows
from taking $u = {\bf 1}_A$ in Part (ii) of   Lemma \ref{LemGam}.

The almost sure convergence in (\ref{1103a})
% (with a greater range of possible
%sequences $(r_n)_{n \geq 1}$)
was proved in \cite{estperim} for
the case where $\rho$ is constant on $D$ and $\phi=\phi_U$. In
 Remark 1.10 of \cite{estperim}
 it is stated that the proof carries through to more general $\rho$
and to all weight functions $\phi$ satisfying (\ref{phi1})-(\ref{phi3}).
 A similar result, with a more restricted range of sequences $(r_n)$ than
we consider here,
is given in Theorem 1 of \cite{APP}.

The result (\ref{1103a})
 can alternatively  be proved using a similar argument to the
 proof of Theorem 3.17 of \cite{RGG}, at least when $\phi$ has bounded
support.
\end{proof}

\begin{proof}[Proof of Proposition \ref{propupper}]
Immediate from Lemma \ref{thm1}.
\end{proof}
% that under the assumptions of Theorem \ref{thm1},

\section{Lower bound}
\label{seclo}
\allco

In this section we complete the proof of
 Theorem \ref{mainthm}.
We shall also prove Theorem \ref{Weaktheo}.
Let $D$, $\rho$, $\phi$ and $(r_n)_{n \geq 1}$
be given, satisfying
 the assumptions of 
 Theorem \ref{mainthm}.
Let $(v,b) \in \{1,2\}^2$.
 If $\CHE_{v,b}(D,\rho)=0$ then (\ref{1011a}) is
immediate from Proposition \ref{propupper}, so we assume until
the end of the proof of Theorem \ref{mainthm} that
$\CHE_{v,b}(D,\rho) > 0$.

%[** Is Theorem \ref{Weaktheo}
%also immediate if $\CHE_{v,b}(D,\rho)=0$?]

Our argument is related to one seen in in \cite{Diaz}.
Given $\cY \subset \X_n$,
think of points in $\cY$ as being {\em black} and points of 
$\cX_n \setminus \cY$ as being {\em white}.
 For $n \in \N$ and $i \in S_n$ (defined in Section \ref{secprelim}),
 we shall say that the box
 $Q_{i,n}$ is  {\em grey} (with respect to $\Y$) if  both
the number of black  points in $Q_{i,n}$, and the number of white points in $Q_{i,n}$,
%exceed $ \gamma_n^{d+1} \rhomin n r_n^d$.
exceed $ \gamma_n n \nu(Q_{i,n})$.
 We shall say the box $Q_{i,n} $ is {\em black}
(with respect to $\cY$) if it is not grey and
$ |\cY \cap Q_{i,n} | \geq (1- 2 \gamma_n) n \nu(Q_{i,n})$.
We shall say $Q_{i,n} $ is {\em white}
(with respect to $\cY$) if it is not grey and
$|\cX_n \setminus \cY| \cap Q_{i,n} \geq (1- 2 \gamma_n) n \nu(Q_{i,n}) $.
By (\ref{Cher2}), for $n \geq N$ every box is either black, white or grey.

Let $g_n(\cY)$ denote the number of grey boxes with respect to $\cY$.
 In other words, set
  $$
g_n(\cY): = \sum_{i\in S_n}
{\bf 1} \{  \min ( |\cY \cap Q_{i,n}|, |(\cX_n \setminus \cY) \cap Q_{i,n}|) >
%\gamma_n ^{d+1} \rhomin n r_n^d \}.
\gamma_n n \nu(Q_{i,n}) \}.
$$

Define the {\em within-box}  edges of $G_\phi(\cX_n,r_n)$ to be
 those edges $\{x,y\}$
 such that
$\{x,y\} \subset Q_{i,n}$
 for some $i$
(i.e., such that both endpoints lie in the same box), and let all other edges
of $G_\phi(\cX_n,r_n)$ be called {\em between-box} edges.

%Clearly
By (\ref{1121a}) there exists $n_2 \in \N$ such that for $n \geq n_2$,
every within-box edge has weight at least $\phi(0)/2$ in
$G_\phi(\X_n,r_n)$, that is
\bea
\phi \left( \frac{ \mod x-y \mod }{r_n}\right)
\geq \frac{\phi(0)}{2}, ~~ \forall i\in S_n,x,y \in Q_{i,n}, n \geq n_2
\label{1216b}
\eea

%Let $v,b \in \{1,2\}$.
 Let  $(\cY_n)_{n \in \N} $ be a sequence of non-empty proper 
subsets of $\cX_n$,
each of which satisfies
%$\card( \cY_n) \leq n/2$
$\Vol_{n,v}( \cY_n) \leq \Vol_{n,v}(\X_n)/2$
and achieves the minimum in
% the definition
 (\ref{Chee0}), i.e.
\bea
\frac{  \Cut_{n,\phi}(\cY_n)}{\Bal_{n,v,b}(\Y_n)}
=
 \CHE_{v,b}(G_\phi(\cX_n,r_n)).
\label{1121f}
\eea
\begin{lemm}
\label{LemYn}
Almost surely, it is the case that
\bea
\limsup_{n \to \infty}
\left(
\frac{ \Cut_{n,\phi}(\cY_n)}{ n^2r_n^{d+1} \Bal_{n,v,b}(\cY_n)} \right) \leq
(\sigma_\phi/2)
\CHE_{v,b}(D,\rho),
\label{1013c}
\eea
and that there exists $n_3 \in \N$
such that for all $n \in \N$ with
$n \geq n_3 $,
  at least one box is black
and at least one box is white
with respect to $\Y_n$.
\end{lemm}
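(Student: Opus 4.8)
The plan is to prove the two assertions separately, both as consequences of the upper bound (Proposition \ref{propupper}) together with the Chernoff concentration from Lemma \ref{lemCher}. For the inequality (\ref{1013c}), I would argue by contradiction along a subsequence. Since $\cY_n$ achieves the minimum in (\ref{Chee0}), we have $\Cut_{n,\phi}(\cY_n)/\Bal_{n,v,b}(\cY_n) = \CHE_{v,b}(G_\phi(\cX_n,r_n))$ by (\ref{1121f}), so (\ref{1013c}) is literally the statement that $(n^2 r_n^{d+1})^{-1}\CHE_{v,b}(G_\phi(\cX_n,r_n)) \le (\sigma_\phi/2)\CHE_{v,b}(D,\rho)$ in the $\limsup$, which is exactly Proposition \ref{propupper}. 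So the first assertion requires essentially no new work beyond invoking Proposition \ref{propupper} and rewriting via (\ref{1121f}).

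For the second assertion — that for large $n$ at least one box is black and at least one is white with respect to $\cY_n$ — I would suppose for contradiction that along some infinite $\cN$ no box is black (the white case is symmetric, after swapping $\cY_n$ with $\cX_n \setminus \cY_n$, which does not change $\Cut_{n,\phi}$ and, since $\Vol_{n,v}(\cY_n) \le \Vol_{n,v}(\cX_n)/2$, only helps the balance term). By Lemma \ref{lemCher}, for $n \ge N$ every box is black, white or grey, so if no box is black then every box is white or grey. First, estimate the black mass: a white box $Q_{i,n}$ has $|\cY_n \cap Q_{i,n}| \le 2\gamma_n n\nu(Q_{i,n})$ and a grey box has $|\cY_n \cap Q_{i,n}|$ at most $(1+\gamma_n)n\nu(Q_{i,n})$ by (\ref{Cher1}), but crucially a grey box also contains at least $\gamma_n n\nu(Q_{i,n})$ white points, so its white count is comparable to its black count up to the Chernoff factors. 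Summing over boxes and using that $\sum_{i \in S_n}\nu(Q_{i,n}) = \nu(\cup_i Q_{i,n}) \le 1$ while $\nu(D \setminus \cup_{i\in S_n}Q_{i,n}) \to 0$ (the boundary boxes carry vanishing mass since $r_n \ll 1$ and $D$ has Lipschitz boundary), one sees that in this scenario $\cY_n$ is, up to small corrections, concentrated on the union of grey boxes, and $\Vol_{n,v}(\cY_n)$ is then controlled by $g_n(\cY_n)$ times a typical box volume.

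The key quantitative point is then a lower bound on $\Cut_{n,\phi}(\cY_n)$ coming purely from grey boxes. Each grey box $Q_{i,n}$ contains at least $\gamma_n n\nu(Q_{i,n})$ black and at least $\gamma_n n\nu(Q_{i,n})$ white points, and by (\ref{1216b}) every within-box edge has weight at least $\phi(0)/2$ for $n \ge n_2$; hence each grey box contributes at least $(\phi(0)/2)(\gamma_n n\nu(Q_{i,n}))^2$ to $\Cut_{n,\phi}(\cY_n)$. Using $\nu(Q_{i,n}) = \Theta(\rho \cdot (\gamma_n r_n)^d)$ and comparing with the balance denominator (which, when $\cY_n$ is supported on grey boxes, is of order $g_n(\cY_n)(\gamma_n r_n)^d$ or its square over $1$), one finds $\Cut_{n,\phi}(\cY_n)/\Bal_{n,v,b}(\cY_n)$ is at least a constant times $n^2 \gamma_n^{d+2} r_n^d$ times a growing factor; dividing by $n^2 r_n^{d+1}$ leaves something of order $\gamma_n^{d+2}/r_n \cdot (\text{growing})$, which by the second relation in (\ref{gammacond}), $\gamma_n^{d+4} \gg r_n$, tends to infinity. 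This contradicts (\ref{1013c}) (hence Proposition \ref{propupper}), since $\CHE_{v,b}(D,\rho) < \infty$. The main obstacle is getting the bookkeeping right: one must carefully distinguish the cases $\nu(\bigcup \text{grey boxes})$ bounded away from $0$ versus tending to $0$, and in the latter case argue that then $\cY_n$ has vanishing relative $v$-volume, forcing the balance term to vanish faster than the cut — so the quotient still blows up. Handling $v=2$ requires invoking (\ref{1216a}) of Lemma \ref{vollem} to compare $\Vol_{n,2}(\cY_n)$ with $nr_n^d|\cY_n|$, but the structure of the argument is unchanged.
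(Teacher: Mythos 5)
Your treatment of the first assertion is correct and is precisely the paper's argument: (\ref{1013c}) is just (\ref{1121f}) combined with Proposition \ref{propupper}, i.e.\ (\ref{1013b}). No further work is needed there.

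For the second assertion your plan has a genuine gap. You propose to lower-bound $\Cut_{n,\phi}(\cY_n)$ \emph{purely from grey boxes}, each contributing at least $(\phi(0)/2)(\gamma_n n\nu(Q_{i,n}))^2$, and then must split according to whether $\nu(\bigcup\text{grey boxes})$ is bounded away from zero. But when the grey mass is small (indeed $g_n(\cY_n)$ can even be zero), the black vertices of $\cY_n$ sit almost entirely in \emph{white} boxes, and your grey-box estimate yields nothing: you would be dividing an essentially zero cut by a small balance term. The assertion that in this case ``the balance term vanishes faster than the cut'' is empty until you produce a lower bound on the cut, and you have not done so. What closes the gap is precisely the white-box contribution: a black vertex in a white box has at least $(1-2\gamma_n)n\nu(Q_{i,n})$ within-box white neighbours, so it incurs at least as much within-box cut as a black vertex in a grey box does. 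Your proposed case distinction neither recognizes nor uses this.

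The paper's proof avoids the dichotomy entirely. If no box is black, then every box is grey or white, and in \emph{both} cases the box contains at least $\gamma_n n\nu(Q_{i,n}) \geq \gamma_n\rhomin n(\gamma_n r_n)^d$ white points. Hence \emph{every} black vertex, regardless of its box's colour, has at least $\gamma_n^{d+1}\rhomin n r_n^d$ within-box white neighbours, each of weight at least $\phi(0)/2$ by (\ref{1216b}). This gives $\Cut_{n,\phi}(\cY_n)\geq |\cY_n|\,\gamma_n^{d+1}\rhomin n r_n^d\,\phi(0)/2$, a linear-in-$|\cY_n|$ bound uniform over all blacks; combined with $\Bal_{n,1,b}(\cY_n)\leq|\cY_n|/n$, the rescaled Cheeger ratio is at least a constant times $\gamma_n^{d+1}/r_n$, which diverges by the second condition in (\ref{gammacond}) and contradicts (\ref{1013c}). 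The white case is symmetric, and the case $v=2$ then reduces to $v=1$ via (\ref{1216a}) and (\ref{RGGedgeLLN}), as you anticipated. So your per-box quadratic bound and the attendant case split on grey mass are both unnecessary and, as written, incomplete.
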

\begin{proof}
The first statement (\ref{1013c}) follows from
 (\ref{1121f}) and (\ref{1013b}).

First suppose $v=1$.
Suppose for infinitely many $n$ that
there  is no black box with respect to $\Y_n$.
Then every box is grey or
white, so each vertex in $\cY_n$
has at least $\gamma_n \rhomin n (\gamma_n r_n)^d $ within-box white
 neighbours, and therefore by (\ref{1216b}),
\bean
\frac{ (n^2 r_n^{d+1})^{-1} \Cut_{n,\phi}(\cY_n) }{ 
(| \cY_n|/n) } \geq   \gamma_n^{d+1}
 \rhomin r_n^{-1} \phi(0)/2,
%\label{1018b}
\eean
and by (\ref{gammacond}) this
 contradicts (\ref{1013c}), whether we take $b=1$ or $b=2$,
since the right hand side of (\ref{1013c}) is finite. Similarly,
for large enough $n \in \N$ at least one box is white.

Now suppose instead that $v=2$. 
By (\ref{1216a}) and (\ref{RGGedgeLLN}),
$$
\frac{\Vol_{n,2} (\cY_n)}{\Vol_{n,2} (\cX_n)} = O \left(
\frac{|\cY_n|}{n} \right),
$$
so we can argue similarly to the case already considered.
\end{proof}

We now define a modification $\bvol_{n,v}$ of the set function $\Vol_{n,v}$
with better linearity properties.
For $x,y \in  D$, 
recalling the definition (\ref{phindef}) of the function $\phi_n(x,y)$,  define
$$
\tphi_n  (x,y) = 
\begin{cases}
0 & {\rm if ~}  x,y \mbox{ lie in the same box} 
\\
\phi_n(x,y) & \mbox{otherwise.}
\end{cases}
$$
For $\cY \subset \X_n$, define
\bea
\overline{\Vol}_{n,v}(\cY) := \begin{cases}
\sum_{y \in \cY} 
\sum_{x \in \X_n \setminus \{y\}} \tphi_n(x,y)
 & {\rm if ~} v=2
\\
\Vol_{n,1}(\cY) 
 & {\rm if ~} v=1.
\end{cases}
\label{bvoldef}
\eea
\begin{lemm}
\label{lemtv}
There exist constants $\delta_n \downarrow 0$ such that
almost surely, for all $n$ and all $\cY \subset \cX_n$,
\bea
\Vol_{n,v}(\cY) \leq (1+ \delta_n) \overline{\Vol}_{n,v} (\cY), ~~~~ v=1,2. 
\label{1217a}
\eea
\end{lemm}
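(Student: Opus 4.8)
The plan is to reduce to a vertex-by-vertex estimate and then treat the within-box and between-box contributions separately. The case $v=1$ needs nothing: by (\ref{bvoldef}) we have $\overline{\Vol}_{n,1}=\Vol_{n,1}$, so any fixed sequence $\delta_n\downarrow 0$ works. For $v=2$, observe that both sides are additive over $\cY$, i.e.\ $\Vol_{n,2}(\cY)=\sum_{y\in\cY}\Vol_{n,2}(\{y\})$ and $\overline{\Vol}_{n,2}(\cY)=\sum_{y\in\cY}\overline{\Vol}_{n,2}(\{y\})$, so it suffices to produce deterministic $\delta_n\downarrow 0$ with $\Vol_{n,2}(\{y\})\le(1+\delta_n)\overline{\Vol}_{n,2}(\{y\})$ for all large $n$ and all $y\in\X_n$, almost surely; summing over $y\in\cY$ then gives (\ref{1217a}). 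Writing $Q(y)$ for the box containing $y$, the discrepancy is $\Vol_{n,2}(\{y\})-\overline{\Vol}_{n,2}(\{y\})=A_y+B_y$, where $A_y:=\sum_{x\in(\X_n\cap Q(y))\setminus\{y\}}\phi(|x-y|/r_n)$ is the within-box weight at $y$ and $B_y:=\sum_{x\in\X_n\setminus Q(y)}\big(\phi(|x-y|/r_n)-\phi_n(x,y)\big)$ is the between-box discrepancy. I would show $A_y+B_y\le\delta_n' n r_n^d$ with a deterministic $\delta_n'\downarrow 0$, uniformly over $y$, together with $\overline{\Vol}_{n,2}(\{y\})\ge$ (positive constant)$\times nr_n^d$; these two facts combine to give the claim.

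For $A_y$: by (\ref{1121a}) every box has diameter $O(\gamma_n r_n)$, so $\nu(Q(y))=O(\gamma_n^d r_n^d)$; by Lemma \ref{lemCher} the box contains at most $(1+\gamma_n)n\nu(Q(y))$ points, and each edge weight is $\le\phi(0)$, whence $A_y=O(\gamma_n^d n r_n^d)=o(nr_n^d)$ uniformly in $y$. For $B_y$ the key trick is to replace its summand, which is not constant on boxes, by the box-constant quantity $\ophin(x,y)-\phi_n(x,y)$: since $\phi(|x-y|/r_n)\le\ophin(x,y)$, $\phi_n(x,y)\le\phi(|x-y|/r_n)$, and $\ophin\ge\phi_n\ge0$ everywhere, we get $B_y\le\sum_{x\in\X_n}\big(\ophin(x,y)-\phi_n(x,y)\big)$, and now Lemma \ref{lemCher} applied box by box yields $B_y\le(1+\gamma_n)n\int_D\big(\ophin(x,y)-\phi_n(x,y)\big)\,\nu(dx)$. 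A one-line triangle-inequality computation using (\ref{1121a}) and the monotonicity (\ref{phi1}) — essentially (\ref{1122a}) together with the matching upper bound $\ophin(x,y)\le\phi\big((|x-y|/r_n-2C\gamma_n)_+\big)$ — bounds the integrand pointwise by $\psi_n(x):=\phi\big((|x-y|/r_n-2C\gamma_n)_+\big)-\phi(|x-y|/r_n+2C\gamma_n)\ge0$. Then $\int_D\psi_n\,d\nu\le\rhomax\int_{\R^d}\psi_n$, and the substitution $x=y+r_n u$ gives
$$ B_y \;\le\; (1+\gamma_n)\rhomax\, n r_n^d \int_{\R^d}\Big(\phi\big((|u|-2C\gamma_n)_+\big)-\phi\big(|u|+2C\gamma_n\big)\Big)\,du \;=:\; (1+\gamma_n)\rhomax\, n r_n^d\,\bar\eta(\gamma_n), $$
a bound uniform in $y$. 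Finally $\bar\eta(\gamma)\to0$ as $\gamma\downarrow0$: the integrand is nonnegative, decreases to $0$ at a.e.\ $u$ (the continuity points of $\phi$) as $\gamma\downarrow0$, and is dominated by $\phi\big((|u|-2C\gamma_1)_+\big)$, which is integrable because (\ref{phi3}) makes $\phi(|\cdot|)$ integrable on $\R^d$ and $\phi\le\phi(0)$; dominated convergence then gives $\bar\eta(\gamma_n)\to0$, so $A_y+B_y\le\delta_n' n r_n^d$ with $\delta_n'\downarrow0$.

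To finish, apply the first inequality of (\ref{1216a}) with $\cY=\{y\}$: almost surely, for all large $n$ and all $y\in\X_n$, $\Vol_{n,2}(\{y\})\ge(C'')^{-1}nr_n^d$, hence $\overline{\Vol}_{n,2}(\{y\})=\Vol_{n,2}(\{y\})-A_y-B_y\ge\big((C'')^{-1}-\delta_n'\big)nr_n^d\ge\tfrac12(C'')^{-1}nr_n^d$ once $\delta_n'\le\tfrac12(C'')^{-1}$. Therefore $\Vol_{n,2}(\{y\})=\overline{\Vol}_{n,2}(\{y\})+(A_y+B_y)\le\overline{\Vol}_{n,2}(\{y\})+\delta_n' n r_n^d\le(1+2C''\delta_n')\overline{\Vol}_{n,2}(\{y\})$, so one takes $\delta_n:=2C''\delta_n'$ (replaced by $\sup_{m\ge n}2C''\delta_m'$ if monotonicity is wanted), which is deterministic and tends to $0$; summing over $y\in\cY$ gives (\ref{1217a}) for all large $n$, and for the finitely many small $n$ (in particular those with $S_n=\emptyset$) the bound is not used in the sequel and $\delta_n$ may be set there arbitrarily. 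The main obstacle is making the $B_y$ estimate hold \emph{uniformly} over the random points $y$; this is precisely what the box-constancy of $\ophin-\phi_n$ buys, since it converts the random sum into the deterministic integral $\int_{\R^d}\phi$, which is finite and shrinks under the enlargement parameter $\gamma_n$ exactly because of the surface-tension hypothesis (\ref{phi3}).
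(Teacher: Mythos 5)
Your argument is correct and reaches the same conclusion by a somewhat different route. Like the paper, you reduce by additivity to per-vertex bounds $\Vol_{n,2}(\{y\})\le(1+\delta_n')\,\overline{\Vol}_{n,2}(\{y\})$, but both ingredients are obtained differently. For the $\Theta(nr_n^d)$ lower bound on $\overline{\Vol}_{n,2}(\{y\})$, you import (\ref{1216a}) and subtract the discrepancy; the paper instead derives a direct lower bound on $\overline{\Vol}_{n,2}(\{x\})$ by a cone argument: the Lipschitz hypothesis furnishes a cone $\K(x,e(x),3\alpha)\cap B(x;r_0)\subset D$, inside which one finds interior boxes at distance $\Theta(r_n)$ from $x$ with $\phi_n(x,\cdot)\ge\phi(2a)$, and their points are counted via (\ref{Cher2}). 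Your shortcut is legitimate (Lemma \ref{vollem} precedes the present lemma and does not rely on it) and avoids repeating the Lipschitz-boundary geometry. For the discrepancy, you split it into a within-box part $A_y = O(\gamma_n^d n r_n^d)$ and a between-box part $B_y$, and you control $B_y$ by passing to the box-constant envelope $\ophin-\phi_n$, converting the random sum to a deterministic integral over $\R^d$, and showing that integral vanishes with $\gamma_n$ by dominated convergence (integrability coming from (\ref{phi3})). The paper's displayed bound $\Vol_{n,2}(\{x\})-\overline{\Vol}_{n,2}(\{x\})\le 2n\rhomax(2C\gamma_n r_n)^d$ is exactly of the size of your $A_y$, so your $B_y$ step spells out, in the spirit of (\ref{1128b})--(\ref{1128c}), a comparison of $\phi$ with $\phi_n$ on between-box pairs that the paper treats much more tersely; in that respect your write-up is the more explicit and self-contained of the two.
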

\begin{proof}
It suffices to consider the case $v=2$.
For $x \in D$, $\alpha \in (0,\pi)$ and 
$e \in \R^d$ with $ \mod e \mod  =1$, let $\K(x,e,\alpha)$
denote the open cone consisting of those $y \in \R^d \setminus \{x\}$
 such that the vector $y-x$ makes an angle less  
than $\alpha $ with $e$. For $r>0$ let
$B(x;r) := \{y \in \R^d: \mod y-x \mod  \leq r\}$, and
let $\omega_d$ denote the Lebesgue measure of $B(x;1)$.
% $\K_r(x,e,\alpha)$
%denote the set of $y \in \K(x,e,\alpha)$ such that $ \mod y-x \mod \leq r$.

By the assumption that $D$ has Lipschitz boundary,
and a  compactness argument, we can (and do) choose $\alpha \in (0,1/6)$
and $r_0 >0$ such that for all $x \in D $ 
there exists $e(x) \in \R^d$ with $ \mod e(x) \mod =1$ such that
$\K(x,e(x),3 \alpha) \cap B(x;r_0) \subset D$.

Choose $a \in( 0,1/4)$ with $\phi(2a) >0$. For $x \in D$, and 
$n \in \N$ large enough so that $a r_n < r_0/2$, 
%let $B'_n(x) $ be the ball of radius $(\alpha) r_n$
%centred on $x + a r_n$.
note that $B(x + ar_ne(x) ;3 a \alpha r_n) \subset D$.
Then for large enough $n$ and all $x \in D$,
for every $i \in S_n$ such that
 $Q_{i,n} \cap B(x+ a r_ne(x) ; a \alpha r_n) \neq \emptyset$, we have
 that $Q_{i,n} \subset D$ and moreover
$Q_{i,n} = Q'_{i,n}$ (that is, $Q_{i,n}$ does not
touch the boundary of $D$), and
 furthermore 
 for all $y \in Q_{i,n}$
we have
$(a/2)r_n \leq  \mod y -x  \mod  \leq  2 a r_n$
so that $\phi_n(x,y) \geq \phi(2a)$. 
Therefore summing over all such $i$ 
and using (\ref{Cher2}), we obtain
for all $x \in \X_n$
 that
$$
\bvol_{n,2}(\{x\}) \geq (1- \gamma_n) n 
\rhomin \phi(2a) \omega_d (a \alpha r_n)^d,
$$
while using (\ref{1121a}) and (\ref{Cher1}) we have that
$$
\Vol_{n,2}(\{x\})
- \bvol_{n,2}(\{x\})
 \leq 2 n \rhomax (2C \gamma_n r_n)^d. 
$$
Summing over $x \in \cY$ we obtain that
$$
\frac{
\Vol_{n,2}(\{\cY\}) - \bvol_{n,2}(\{\cY\})
}{
\bvol_{n,2}(\{\cY\})
}
\leq  \frac{2^{1+d}  \rhomax C^d \gamma_n^d  }{(1- \gamma_n) \rhomin \phi(2a)
 \omega_d (\alpha a)^d }
$$
which tends to zero, as required.
\end{proof}

For $n \in \N$ and $\Y \subset \X_n$, define the modified cut function
\bea
\Cut'_{n,\phi}(\Y) := \sum_{x \in \cY}
 \sum_{y \in \cX_n \setminus \cY} \phi_n(x,y).
\label{1121g}
\eea

Denote by
$\cNstar$ the set of  $n \in \N$ such that
 $\Vol_{n,v}(\Y_n)/\Vol_{n,v}(\X_n) \in [\gamma_n,1/2 ]$.
The next lemma is a key part of our proof. It provides
a method of `greyscale removal' whereby we 
modify $\cY$ slightly in a manner that makes
all the boxes black or white.
% and denote by $\N \setminus \cNstar$ the set of $n \in \N$
%such that $n^{-1} \card(\cY_n) \in (0,\gamma_n) $.
%At least one of  $\cNstar$ or $\R^d \setminus \cNstar$ is infinite: if either one of
%$\cNstar$ and $\R^d \setminus \cNstar$  is finite
%we can restrict attention to values of $n$ in the other one.
\begin{lemm}
\label{Apartheid}
Almost surely, there exists a sequence
 of subsets $\Y'_n \subset  \X_n$, defined for $n \geq 1$, 
satisfying
\bea
n^{-1} |\cY'_n \triangle \cY_n|
\ll \gamma_n^2,
\label{1013e}
\eea
and a (random) number $n_4 \in \N$ such that
 for all $n \in \N $ with $n \geq  n_4$ we have that
\bea
g_n(\cY'_n) =0,
\label{1127c}
\eea
and that the union $B_n$ of black boxes
induced by $\cY'_n$ satisfies $B_n \neq \emptyset$ and
$D \setminus B_n \neq \emptyset$,
and moreover  that
% for all large enough $n \in \N$ we have
\bea
 \Cut_{n,\phi}(\Y_n)
\geq
\Cut'_{n,\phi}( \cY'_n)
~~~~ {\rm if~} n \in \cNstar
\label{1127a}
\eea
and
\bea
\frac{\Cut_{n,\phi}( \cY_n)}{\bvol_{n,v}(\cY_n)} \geq
 \frac{Z_n}{\bvol_{n,v}(\cY'_n \cap B_n)}~~~ {\rm if~}
 n \in \N \setminus \cNstar,
\label{1127b}
\eea
where $Z_n$ denotes the
 contribution to $ \Cut'_{n,\phi}(\cY'_n)$
from edges with exactly one endpoint in $B_n$.
\end{lemm}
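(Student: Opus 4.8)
The plan is to construct $\cY'_n$ from $\cY_n$ by a local optimisation argument, processing the grey boxes one at a time. Fix $n$ large enough that Lemmas \ref{lemCher} and \ref{LemYn} apply, so that every box is black, white or grey, and at least one black and at least one white box exist with respect to $\cY_n$. For each grey box $Q_{i,n}$ I would decide whether to recolour it black (i.e.\ put all its points into $\cY'_n$) or white (remove all its points from $\cY'_n$) according to which choice does not increase the relevant cost. The key local observation is: by \eqref{1216b} every within-box edge has weight $\geq \phi(0)/2$, so a grey box $Q_{i,n}$ carries at least $\gamma_n^2 (\rhomin n(\gamma_n r_n)^d)^2 \cdot \phi(0)/2 \gg$ (the between-box contribution of that box, which is $O(n\nu(Q_{i,n}))$ edges of weight $\leq \phi(0)$ times $n\nu(Q_{i,n})$, but crucially the within-box mixed pairs number $\gg \gamma_n^2 (n\nu(Q_{i,n}))^2$ while the perturbation in between-box cut caused by flipping the box is $O(n\nu(Q_{i,n}) \cdot n r_n^d)$). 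Since $\gamma_n^2 (n\gamma_n^d r_n^d)^2 \cdot r_n^{-1}$ dominates $n^2 r_n^{d}$ by the first condition in \eqref{gammacond}, recolouring any grey box strictly decreases $\Cut$. One then recolours greedily; the total number of points moved across all grey boxes is at most $\sum_i |\cX_n \cap Q_{i,n}| {\bf 1}\{Q_{i,n} \text{ grey}\}$, and since each grey box contributes $\gg \gamma_n^{-2} n^2 r_n^{d+1}/(n r_n^{d+1})$-worth of cut weight that must be present in $\Cut_{n,\phi}(\cY_n) = O(n^2 r_n^{d+1})$ by \eqref{1013c}, the number of grey boxes, hence $n^{-1}|\cY'_n \triangle \cY_n|$, is $o(\gamma_n^2)$, giving \eqref{1013e}.

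Next I would verify \eqref{1127c} and the existence of a nonempty proper $B_n$. After recolouring, no box is grey, so $g_n(\cY'_n)=0$; I should check recolouring a grey box cannot create a new grey box (it does not, since a box's colour depends only on the points inside it, which are untouched unless that box itself is being recoloured — process grey boxes independently using the original point counts). For $B_n \neq \emptyset$ and $D\setminus B_n \neq \emptyset$: since $n^{-1}|\cY'_n\triangle\cY_n| = o(\gamma_n^2) = o(1)$ while $\Vol_{n,v}(\cY_n)/\Vol_{n,v}(\cX_n) \leq 1/2$ and (for $n\in\cNstar$) $\geq \gamma_n$, the set $\cY'_n$ still has a macroscopic but not-everything fraction of the mass, forcing both a black and a white box; for $n\notin\cNstar$ I would instead invoke the last clause of Lemma \ref{LemYn} directly, noting black and white boxes persist because only grey boxes change colour.

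For the cut inequalities: when $n\in\cNstar$, I claim $\Cut_{n,\phi}(\cY_n) \geq \Cut'_{n,\phi}(\cY'_n)$. Write $\Cut_{n,\phi}(\cY_n) = (\text{within-box mixed weight}) + (\text{between-box mixed weight})$. Recolouring grey boxes kills all within-box mixed weight (each recoloured box becomes monochromatic) without increasing between-box mixed weight by more than the within-box weight destroyed — this is precisely the local optimality built into the greedy choice, applied box-by-box. Since $\phi_n \leq \phi(\cdot/r_n)$ by \eqref{1121d} and $\tphi_n$ and $\phi_n$ agree on between-box pairs, after recolouring $\Cut'_{n,\phi}(\cY'_n)$ counts only between-box pairs with weight $\leq$ the true weight, so $\Cut'_{n,\phi}(\cY'_n) \leq \Cut_{n,\phi}(\cY'_n) \leq \Cut_{n,\phi}(\cY_n)$. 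For $n\notin\cNstar$ (where $\cY_n$ is very small), I would argue that $\cY'_n\cap B_n$ holds essentially all the black-box mass, $Z_n$ (the black-box boundary cut under $\phi_n$) is at most the true cut of $\cY_n$, and the balance terms satisfy $\bvol_{n,v}(\cY'_n\cap B_n) \leq \bvol_{n,v}(\cY_n)(1+o(1))$ using Lemma \ref{lemtv} and that the symmetric difference is negligible, yielding \eqref{1127b} after absorbing the $(1+o(1))$ into the inequality direction (this needs the small-$\cY_n$ regime to control the denominator; I would be careful that $\bvol_{n,v}(\cY'_n\cap B_n) > 0$, which holds since $B_n\neq\emptyset$).

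The main obstacle I anticipate is making the box-by-box greedy recolouring \emph{globally} valid — one must ensure that the decrease-in-cut guaranteed locally for one grey box is not cancelled by later recolourings of adjacent grey boxes. The clean way around this is to note that the between-box weight between two boxes is $O(n\nu(Q_{i,n})\cdot n\nu(Q_{j,n}))$, which is a factor $\Theta(\gamma_n^d)$ smaller than the within-box mixed weight $\gg \gamma_n^2(n\nu(Q_{i,n}))^2$ of a single grey box; so even the \emph{total} between-box weight incident to all grey boxes is dominated by the within-box weight of a single grey box when $\gamma_n^{d} \ll \gamma_n^2$, i.e.\ always (for $d\geq 2$, $\gamma_n^d \leq \gamma_n^2$). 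Hence one may recolour all grey boxes simultaneously (say, all to black, or splitting by a fixed rule) and still strictly decrease the cut; the precise split between black and white is then chosen only to keep the balance term from collapsing, which is where the $n\notin\cNstar$ versus $n\in\cNstar$ dichotomy enters and where I would spend the most care.
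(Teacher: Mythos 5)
There is a genuine gap in your proposal, centred on the dominance claim that "recolouring any grey box strictly decreases $\Cut$." That claim is false. The within-box mixed weight of a grey box is at least of order
$(\gamma_n n \nu(Q_{i,n}))^2 = \gamma_n^{2+2d} n^2 r_n^{2d}$,
while the perturbation in the between-box cut caused by flipping that box can be as large as
$n \nu(Q_{i,n}) \cdot O(n r_n^d) = O(\gamma_n^d n^2 r_n^{2d})$
(each of the $O(n\nu(Q_{i,n}))$ points in the box has $O(n r_n^d)$ weighted between-box neighbours). The ratio of the first to the second is $\gamma_n^{2+d}\to 0$, so the between-box perturbation \emph{dominates}, not the other way round; your factor "$\Theta(\gamma_n^d)$ smaller" is inverted. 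Consequently, flipping a grey box to an arbitrarily chosen colour (or "all to black, or splitting by a fixed rule") can strictly increase the cut, and the whole final paragraph of your plan — which was meant to rescue the greedy from global-cancellation worries — is built on this mistaken inequality. The relatedly muddled step "the number of grey boxes, hence $n^{-1}|\cY'_n\triangle\cY_n|$, is $o(\gamma_n^2)$" is also off: the number of grey boxes is of order $\gamma_n^{-2d-2}r_n^{1-d}$ (not $o(\gamma_n^2)$); it is only after multiplying by the $O(n(\gamma_n r_n)^d)$ points per box and dividing by $n$ that one gets $o(\gamma_n^2)$ via the \emph{second} condition of \eqref{gammacond}, $\gamma_n^{d+4}\gg r_n$.

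The ingredient you are missing is the exact \emph{linearity} (respectively, M\"obius-monotonicity) observation that makes the greedy go through with no asymptotics at all. Fix a grey box $Q_{i,n}$ with $\ell$ black and $w$ white points, $m=\ell+w$. The between-box contribution of this box to $\Cut'_{n,\phi}$ is $\ell w' + w\ell' = \ell(w'-\ell') + m\ell'$, an affine function of $\ell$ on $[0,m]$, hence minimised at $\ell=0$ or $\ell=m$; and either endpoint also kills all within-box mixed edges. So at least one of the two monochromatic recolourings does not increase $\Cut'_{n,\phi}$ — exactly, not up to $o(1)$. Processing the grey boxes one at a time, each step is individually non-increasing, so no "global cancellation" problem arises and one needs no dominance. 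For $n\notin\cNstar$ one must instead control the ratio $x/y$; here the paper uses that $\frac{x+\alpha k}{y+\beta k}-\frac{x}{y}=\frac{k(\alpha y-\beta x)}{y(y+\beta k)}$ can be made non-positive by choosing the sign of $k$, i.e.\ again one of the two extreme recolourings works. Your proposal to obtain \eqref{1127b} by "absorbing the $(1+o(1))$ into the inequality direction" is not rigorous in the same way: \eqref{1127b} is an exact inequality (for $n\geq n_4$), and the correct route is this ratio monotonicity, not a multiplicative fudge from Lemma~\ref{lemtv}.

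In short: your construction (recolour each grey box to whichever monochromatic option does not increase the objective) is the paper's construction, but the justification you offer for why such an option always exists is incorrect; the step you should replace is the entire dominance argument, substituting the endpoint-minimum property of affine functions (and its M\"obius analogue for the ratio case).
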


\begin{proof}
Set $K_n := 5 \rhomin^{-2} \gamma_n^{-2d -2} \sigma_\phi 
\CHE_{v,b}(D,\rho)/\phi(0)$,
which tends to infinity since
%$\gamma = \gamma_n$ is
$\gamma_n$ tends to zero.
Suppose there is an infinite set $\cN_1 \subset \N$ such that
$g_n(\cY_n) \geq K_n r_n^{1-d}$
for all $n \in \cN_1$.
 Then by considering only the
within-box edges and using (\ref{1216b}),
 we have for large enough $n \in \cN_1$ that
\bean
\Cut_{n,\phi}(\cY_n) \geq g_n(\cY_n) (\gamma_n^{d+1} \rhomin n r_n^d)^2
\phi(0)/5
\\
 \geq K_n \gamma_n^{2d +2} \rhomin^2 n^2 r_n^{d+1} \phi(0)/5 \\
= n^2 r_n^{d+1} \sigma_\phi \CHE_{v,b}(D,\rho),
%~~~~~n \in \cN,
\eean
which would contradict (\ref{1013c}) since $\Bal_{n,v,b}(\cY_n) \leq 1$.
 Therefore  there exists
$n_5 \in [ n_0,\infty)$ such that
\bea
g_n(\cY_n ) < K_nr_n^{1-d} {\rm ~  for ~ all ~}   n \geq n_5 .
\label{1129a}
\eea

Suppose $\cNstar$ is infinite and let
 $n \in \cNstar$ (so that
%$g_n(\cY_n ) < K_nr_n^{1-d}$ and
$\Vol_{n,v}(\cY_n)/\Vol_{n,v}(\cX_n) \in [\gamma_n,1/2]$).
Assume also  that $n \geq \max( N, n_5)$, where
 $N$ is as in Lemma \ref{lemCher}.
We consider the effect of changing the colour of some of the vertices
 in a given box, on the contribution
  of  between-box edges  to the cut. Let $i \in \N$. Suppose there
are $\ell $ black vertices (with respect to $\cY_n$)
 and $w$ white vertices in the box $Q_{i,n}$, and
recalling the definition of $z_{i,n}$ from the start of
Section \ref{secprelim},
set
\bea
\ell' := \sum_{j \in S_n \setminus \{i\} }
\phi_n(z_{i,n},z_{j,n})
|\Y_n \cap Q_{j,n}|
;
\label{1121b}
\\
w' := \sum_{j \in S_n \setminus \{i\} }
\phi_n(z_{i,n},z_{j,n})
|(\X_n \setminus \Y_n) \cap Q_{j,n}|.
\label{1121c}
\eea
%are $b' $ black vertices and $w'$ white vertices in boxes $Q_{j,n}$ such that
%$i \sim_n j$ and $i \neq j$.
Let $m:= \ell+w $, the total number of vertices in $Q_{i,n}$.
Then the total contribution 
to $\Cut'_{n,\phi}(\cY_n)$ from between-box edges with one endpoint in $Q_{i,n}$ is equal to
the expression
\bean
\ell w' + w \ell' = \ell w' + (m-\ell) \ell' = \ell(w'-\ell') + m\ell'.
%\label{1017a}
\eean
This expression is a linear function of $\ell$, if we consider $m,w'$ and $\ell'$ as
being fixed.  Therefore as a function of $\ell$ it is minimised
over the range $[0,m]$ either at $\ell=0$ or at $\ell =m$ (or both).
Moreover, taking $\ell=0$ or $\ell=m$ reduces the number of
within-box edges in this box to zero.
Hence we can (and do) modify the colour of vertices in $Q_{i,n}$ to make
 all vertices in $Q_{i,n}$ have the same colour, in such a way that
resulting set $\tilde{\cY_n}$ of black vertices  has
$\Cut'_{n,\phi}(\tilde{\cY_n}) \leq \Cut'_{n,\phi}(\cY_n)$.

Repeating this process for each of the $i$ such that $Q_{i,n}$ is grey with
respect to
the original set $\cY_n$, considered one by one,
we end up with a new set of black vertices, denoted 
$\cY'_n$, with $g_n(\cY'_n)=0$,
such that $\Cut'_{n,\phi}(\cY'_n) \leq \Cut'_{n,\phi}(\cY_n)$.
Also  
 $\Cut'_{n,\phi}(\cY_n) \leq \Cut_{n,\phi}(\cY_n)$ by (\ref{1121g})
 and (\ref{1121d}),
so we have (\ref{1127a}).

Next we prove (\ref{1127b}).
Suppose $\N \setminus \cNstar$ is infinite (else (\ref{1127b}) holds vacuously for
large enough $n$).
Let $n \in \N \setminus \cNstar$.
% so $\card(\cY_n)<  n \gamma_n $. 
Assume $n \geq n_3$ with
$n_3$ given by Lemma \ref{LemYn}, so there is at least  one black box
and at least one white box with respect to $\cY_n$.
Let us write
$x_{n,1}$ for the total weight (using weight function $\phi_n$)  of between-box cut edges  involving black vertices in black boxes,
$x_{n,2}$ for the total weight  of between-box cut edges  involving black vertices in grey boxes
and $x_{n,3}$ for the total weight  of within-box cut edges  involving black vertices in white boxes.
Let $y_{n,1},y_{n,2},y_{n,3}$ be the total volume $\bvol_{n,v}$ (as defined
in (\ref{bvoldef})  of the set 
 of black vertices in black boxes, in grey boxes
and in white boxes respectively. 
Set $V_n := \Vol_{n,v}(\X_n)$.

% As at (\ref{1121g}) let $\Cut'_{n,\phi}(\cY_n)$
%denote the total $\phi_n$-weight
 %of all cut edges.
Then
\bean
\frac{\Cut'_{n,\phi} (\cY_n) }{\bvol_{n,v}(\Y_n)/V_n} \geq \frac{V_n x_{n,1} + V_n x_{n,2} + V_n x_{n,3}}{y_{n,1} + y_{n,2}
+ y_{n,3}}
\geq \min \left(
\frac{ V_n x_{n,1} + V_n x_{n,2}}{y_{n,1} + y_{n,2}}, \frac{V_n x_{n,3}}{y_{n,3}} \right).
\eean
By
 a similar argument to the proof of Lemma \ref{LemYn}
 (see also Lemma \ref{vollem}),
%(also similar to the argument in the text before (\ref{1013f})),
%the same argument as given just after (\ref{1015a}) (also similar to the
we have that $(n^2 r_n^{d+1})^{-1}  (V_n x_{n,3}/y_{n,3})$ exceeds
a strictly  positive
constant times $\gamma_n^{d+1} r_n^{-1}$, and therefore by (\ref{1013c}), for
large enough $n \in \N \setminus \cNstar$   the above minimum
must be achieved by the the first of the two ratios.

We now look again
at the effect of changing the the colour of  vertices in a grey box $Q_{i,n}$.
With $w'$ defined by
%(\ref{1121b}) and
 (\ref{1121c}),
let $\ell''$ be defined similarly to $\ell'$ in
(\ref{1121b}) but with the sum restricted to those
$j \in S_n \setminus \{i\}$ for which the box
$Q_{j,n}$ is black or grey. Then
% respectively,
% the notation of (\ref{1017a})
% write
set $\alpha := w'- \ell''$.
Write $x$ for the expression denoted $x_{n,1}+x_{n,2}$ above
and $y$ for $y_{n,1} + y_{n,2}$.
If we change the number of black vertices in the box by amount $k$,
keeping the total number of vertices the same
(in fact we shall consider just two possible values of $k$ below), then
the value of $x$ changes to $x+ \alpha k:=x'$ and $y$ changes to $y + \beta k: = y' $,
where we set
$$
\beta = \begin{cases}
1 & \mbox{ if } v=1
\\
\sum_{j \in S_n} \tphi_n ( z_{i,n},z_{j,n}) |\Y_n \cap Q_{j,n}| & \mbox{ if }
v=2.
 \end{cases}
$$
Then
$$
\frac{x'}{y'} - \frac{x}{y} = \frac{x+ \alpha k}{y+ \beta k} - \frac{x}{y}
= \frac{(x+ \alpha k)y - x(y+ \beta k)}{(y+ \beta k)y}
= \frac{k(\alpha y -  \beta x)}{(y+ \beta k)y},
$$
which  can be made  non-positive either by taking
$k= |(\X_n \setminus \cY_n) \cap Q_{i,n}|$ or by taking 
$k= - |\cY_n \cap Q_{i,n}|$
%for all nonnegative $k$ or for all non-positive
%$k$
 (depending on the sign of $\alpha y - \beta x$).
% or both (if $\alpha y =x$);
Note that since there is at least one black box and every 
box has at least one neighbouring box,
 we have $y + \beta k > 0$ for
both of these choices of $k$.

Therefore we can choose a colour (white or black) and change all the vertices in
$Q_{i,n}$ to that colour, without increasing the ratio $x/y$.
Repeating this for each of the grey boxes in turn, we end up with a set
$\cY'_n$ that induces no grey boxes
%(with respect to $\cY'_n$)
and has
 a reduced (or at least not increased) value of $x/y$ compared to $\cY_n$.
Also, this procedure does not affect the value of $x_{n,3}$ or $y_{n,3}$
because, while some new white boxes might be created, none of them
contains black vertices at all.
Let $y'_{n,1}$ be the number of black vertices in black boxes induced by $\cY'_n$,
and let $x'_{n,1}$ be the total $\phi_n$-weight of between-box cut 
edges involving these vertices.  Then
$$
\frac{\Cut_{n,\phi} (\cY_n)}{\bvol_{n,v}(\cY_n)/V_n}
\geq \frac{V_n x_{n,1}+ V_n x_{n,2}}{ y_{n,1}+  y_{n,2}} \geq
\frac{V_n x'_{n,1}}{y'_{n,1}},
$$
and (\ref{1127b}) follows because $x'_{n,1} \geq Z_n$.

Finally, in both cases considered above ($n \in \cNstar $ and
$n \in \N \setminus \cNstar$), the modification of $\cY_n$ to obtain $\cY'_n$
involves changing the colour only of vertices in grey boxes
(with respect to $\cY_n$), so every black (repectively white)
box with respect to $\cY_n$ is also black (resp. white) with
respect to $\cY'_n$. It then follows from Lemma \ref{LemYn}
that almost surely, $B_n$ and $D \setminus B_n$
are non-empty for large enough $n$.
Moreover, for large enough $n$, by (\ref{1129a})
%since $g_n(\cY_n) < K_nr_n^{1-d}$ and also
and (\ref{Cher1}),
% holds,
%$n \geq N$ so
% $\card (\cX_n \cap Q_{i,n}) \leq 2 n (\gamma_n  r_n)^d \rhomax$ for all $i$,
%we have
\bean
n^{-1} |\cY'_n \triangle \cY_n|  & \leq &
 2 K_n
 \gamma_n^d  \rhomax  (2C)^d  r_n
\nonumber \\
& = &
O( \gamma_n^{-d-2}  r_n  ).
% = o(\gamma_n^2).
\eean
%where $\omega_d$ denotes the volume of the Euclidean unit ball in $\R^d$,
%and the last estimate in (\ref{1013e})  comes from 
Thus by
(\ref{gammacond})
 we have (\ref{1013e}).
\end{proof}

As in the statement of Lemma \ref{Apartheid},
for $n \in \N$ let $B_n$
denote the union of the black boxes induced by
$\cY'_n$,
 let $Z_n $ be the contribution to $ \Cut'_{n,\phi}(\cY'_n)$
from edges with one endpoint in $B_n$ and the other endpoint in
$W_n := D \setminus B_n$, the union of the white boxes.
Then by (\ref{Cher2}),
$$
Z_n \geq (1- 2 \gamma_n)^2 n^2 \left(\int_{B_n} 
\int_{W_n} \phi_n(x,y) \nu(dx) \nu(dy)
\right). 
$$
Therefore by
Lemma \ref{lemphin}, with
 $C'$, $\tgamma_n$ and  $\tr_n$ as defined in that result, 
  %$\tr_n:= (1- C' \gamma_n)r_n$, where
%is as in
%If $i,j \in S_n$  and  $x \in Q_{i,n}$,
% $y \in Q_{j,n}$ with $\mod x-y\mod \leq \tr_n$,
%then by the triangle inequality $i \sim_n j$.
%Hence we have
\bea
Z_n \geq (1- 2 \gamma_n)^2(1- \tgamma_n) n^2 \left(\int_{B_n} \int_{W_n}
 \phi \left( \frac{ \mod y-x \mod }{\tr_n} \right) \nu(dx) \nu(dy)
\right) \nonumber
%\\
%\geq (1- C \gamma)^3 n^2 \left(\int_{B_n} \int_{D \setminus B_n} \phi((y-x)/r_n) \nu(dx) \nu(dy)
%\right) \nonumber
\\
= (1- 2 \gamma_n)^2 (1- \tgamma_n) n^2 \tr_n^{d+1} ((1/2)
 TV_{\phi,\tr_n}({\bf 1}_{B_n})    ),
\label{1015c}
\eea
where $TV_{\phi,r}(u)$
%= TV_{\phi,r}(u;\rho)$
 is as defined in (\ref{TVrdef}).
\begin{lemm}
\label{Npplem}
Almost surely
 $\N \setminus \cNstar$ is finite.
\end{lemm}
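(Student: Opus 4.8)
The plan is to argue by contradiction. I would work on the almost sure event on which the conclusions of Lemmas~\ref{lemCher}, \ref{vollem}, \ref{lemtv}, \ref{LemYn} and \ref{Apartheid} all hold, and suppose there is a realisation in this event for which $\cN:=\N\setminus\cNstar$ is infinite. Since we always take $\Vol_{n,v}(\cY_n)\le\Vol_{n,v}(\X_n)/2$, membership of $\cN$ is equivalent to $\Vol_{n,v}(\cY_n)<\gamma_n\Vol_{n,v}(\X_n)$, so for $n\in\cN$ the set $\cY_n$ is very small. The aim is to show this forces the union $B_n$ of black boxes induced by $\cY'_n$ to satisfy $\nu(B_n)\to0$ while its smoothed perimeter $TV_{\phi,\tr_n}(\1_{B_n})$ stays comparable to $\nu(B_n)$; the compactness part of Lemma~\ref{LemGam} then gives a contradiction.

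First I would extract the consequences of smallness. Because $\Bal_{n,v,b}(\cY_n)\le\Vol_{n,v}(\cY_n)/\Vol_{n,v}(\X_n)$ for both $b=1,2$, the $\limsup$ bound (\ref{1013c}) of Lemma~\ref{LemYn} gives, for some constant $C$ independent of $n$ and all large $n$, that $\Cut_{n,\phi}(\cY_n)\le C n^2 r_n^{d+1}\Vol_{n,v}(\cY_n)/\Vol_{n,v}(\X_n)$; with $\bvol_{n,v}(\cY_n)\ge\Vol_{n,v}(\cY_n)/(1+\delta_n)$ from Lemma~\ref{lemtv} this yields
$$\frac{\Cut_{n,\phi}(\cY_n)}{\bvol_{n,v}(\cY_n)}\ \le\ C(1+\delta_n)\,\frac{n^2 r_n^{d+1}}{\Vol_{n,v}(\X_n)}.$$
Next, (\ref{1013e}) lets us compare $|\cY'_n|$ with $|\cY_n|$, and (using (\ref{1216a}) when $v=2$) one finds $|\cY'_n|=O(\gamma_n n)$; since a black box $Q_{i,n}\subseteq B_n$ carries at least $(1-2\gamma_n)n\nu(Q_{i,n})$ points of $\cY'_n$, summing over these boxes gives $\nu(B_n)\le|\cY'_n|/((1-2\gamma_n)n)=O(\gamma_n)\to0$. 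The same counting, together with Lemma~\ref{lemCher} and (\ref{1216a}), produces a constant $C_1$ with $\bvol_{n,v}(\cY'_n\cap B_n)\le C_1\,\nu(B_n)\Vol_{n,v}(\X_n)$ for all large $n$.

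Now I would combine these with the lower bound of Lemma~\ref{Apartheid}. For $n\in\cN$, (\ref{1127b}) together with (\ref{1015c}) gives
$$\frac{\Cut_{n,\phi}(\cY_n)}{\bvol_{n,v}(\cY_n)}\ \ge\ \frac{(1-2\gamma_n)^2(1-\tgamma_n)\,n^2\tr_n^{d+1}\cdot\tfrac12 TV_{\phi,\tr_n}(\1_{B_n})}{\bvol_{n,v}(\cY'_n\cap B_n)}.$$
Comparing this with the previous display, substituting the bound $\bvol_{n,v}(\cY'_n\cap B_n)\le C_1\nu(B_n)\Vol_{n,v}(\X_n)$, cancelling the common factor $n^2/\Vol_{n,v}(\X_n)$, and using $\tr_n/r_n\to1$ and $\gamma_n,\tgamma_n,\delta_n\to0$, one obtains a constant $C_3$ independent of $n$ with $TV_{\phi,\tr_n}(\1_{B_n})\le C_3\,\nu(B_n)$ for all large $n\in\cN$. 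Since each box $Q_{i,n}$, $i\in S_n$, contains the full cube $Q'_{i,n}\subseteq D$, we have $\nu(B_n)>0$, so I may set $u_n:=\nu(B_n)^{-1}\1_{B_n}\in L^1(\nu)$; then $\|u_n\|_{L^1(\nu)}=1$ and $TV_{\phi,\tr_n}(u_n)=TV_{\phi,\tr_n}(\1_{B_n})/\nu(B_n)\le C_3$ for large $n\in\cN$. As $(\tr_n)$ is eventually $(0,1)$-valued with $\tr_n\ll1$, Lemma~\ref{LemGam}(iii) applied along $\cN$ yields an infinite $\cN'\subseteq\cN$ along which $u_n\to u$ in $L^1(\nu)$. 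But an $L^1$-convergent sequence is uniformly integrable, while $\int_{B_n}u_n\,d\nu=1$ and $\nu(B_n)\to0$ force $\int_{\{u_n>M\}}u_n\,d\nu=1$ for every $M>0$ and all large $n\in\cN'$; hence $\{u_n:n\in\cN'\}$ is not uniformly integrable. This contradiction proves the lemma.

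The step I expect to be the main obstacle is the passage to $TV_{\phi,\tr_n}(\1_{B_n})\le C_3\nu(B_n)$ with $C_3$ genuinely uniform in $n$: one must carry the smallness of $\cY_n$ through the symmetric-difference estimate (\ref{1013e}) and control $\bvol_{n,v}$ against both $\Vol_{n,v}$ and $\nu$-volumes of unions of boxes, uniformly in the varying box geometry and in both cases $v=1,2$. Once that is in place, the contradiction is the familiar principle that an $L^1$-bounded family with bounded smoothed total variation cannot concentrate its mass on sets of vanishing measure.
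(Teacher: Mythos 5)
Your proposal is correct and follows essentially the same route as the paper: show that for $n \in \N \setminus \cNstar$ one has both $\nu(B_n) \to 0$ and $TV_{\phi,\tilde r_n}(\mathbf 1_{B_n}) = O(\nu(B_n))$ (via (\ref{1127b}), (\ref{1015c}) and the bounds relating $\overline{\Vol}_{n,v}$, $\Vol_{n,v}$ and $\nu$), then apply the compactness part of Lemma~\ref{LemGam} to the normalised indicators to reach a contradiction. The only real difference is cosmetic: the paper cites a structural characterisation of the $L^1$-limit (Lemma~6 of \cite{consistency}) and contradicts $0<\nu(B)<1$ with $\nu(B_n)\to 0$, whereas you argue directly that $u_n = \nu(B_n)^{-1}\mathbf 1_{B_n}$ cannot be uniformly integrable when $\nu(B_n)\to 0$, which is a clean, self-contained way to close the argument.
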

\begin{proof}
Suppose $\N \setminus \cNstar$ is infinite. Let $n \in \N \setminus \cNstar$, so
$\Vol_{n,v}(\cY_n)<  \gamma_n \Vol_{n,v}(\X_n) $.
Set $V_n := \Vol_{n,v}(\X_n)$.
By (\ref{1217b}), 
 (\ref{1217c}) 
and (\ref{1217a}), 
$$
\Bal_{n,v,b}(\cY_n) \leq \Vol_{n,v}(\Y_n)/V_n \leq (1+ \delta_n) \bvol_{n,v}
(\cY_n)/V_n.
$$
Therefore using
(\ref{1121f})
and
(\ref{1127b}),
 we have for $b=1,2$ that
\bea
%\frac{
\CHE_{v,b}(G_\phi(\X_n,r_n))
%}{ n^2 r_n^{d+1}}
 \geq
%(n^2 r_n^{d+1})^{-1}
 \frac{
(1+ \delta_n)^{-1} 
\Cut_{n,\phi} (\cY_n)}{
\bvol_{n,v} (\cY_n)/V_n}
\geq
\frac{ 
%(n^2 r_n^{d+1})^{-1}
(1+ \delta_n)^{-1}
 Z_{n}}{ \bvol_{n,v}( \cY'_n \cap B_n)/ V_n  }.
\label{1017c}
\eea
%By a similar argument to (\ref{1015c}) we have that $x'_{n,1}$ is at least the
%right hand side of (\ref{1015c}), while
By (\ref{Cher1}), if $v=1$ then 
$\Vol_{n,1}(\cY'_{n} \cap B_n) \leq 2 n 
%(1+ \gamma_n) 
\nu (B_n)$ for $n$ large;
if $v=2$ then using (\ref{1216a}) and
(\ref{RGGedgeLLN})
we can find a constant $C'''$ such that
for all large enough $n$ we have
$$
\Vol_{n,2}(\cY'_{n} \cap B_n)/V_n \leq
\frac{C'' |\Y'_n \cap B_n|}{n \int \rho(x)^2 dx \int \phi( \mod y \mod )dy}
\leq
 C''' (1+ \gamma_n) \nu (B_n).
$$
  Hence by (\ref{1015c}) and (\ref{1017c}),
for every $(v,b) \in \{1,2\}^2$
there exists a constant $c>0$ such that for all large enough $n$,
\bean
\frac{ \CHE_{v,b}(G_\phi(\X_n,r_n))}{
n^2 r_n^{d+1}}
\geq \frac{ 
%(1-2 \gamma_n)^3 (1-  \tgamma_n) (1- C'\gamma_n)^{d+1}
c    TV_{\phi,\tr_n}({\bf 1}_{B_n}) }{2 \nu(B_n)}.
%\label{1103b}
\eean
Since the functions $ {\bf 1}_{B_n}/\nu(B_n)$ are  
 $L^1(\nu)$-bounded, uniformly in $n$,
this shows  
by (\ref{1013b}) and compactness (part (iii) of Lemma \ref{LemGam}) that
 the functions ${\bf 1}_{B_n}/\nu(B_n)$ converge in $L^1(\nu)$ along
a subsequence to
a limiting function of the form ${\bf 1}_B/\nu(B)$ with
 $B \in \cB(D)$ and $0 < \nu(B)< 1$; see Lemma 6 of \cite{consistency}.
%[** Here we need the strong form of part (iii) of Lemma \ref{LemGam}.]
%Also we may be using the fact that 
%$\Vol_{\nu,2}(B_n) = \theta(\Vol_{\nu,1}(B_n))$.

%In fact $\nu(B) < 1/2$. Indeed,
However, by (\ref{1013e})
and (in the case $v=2$) (\ref{1216a}) 
we have that
$n^{-1} |\cY'_n| \to 0$ as $n \to \infty$ through $\N \setminus \cNstar$.
Then by (\ref{Cher2}), as $n \to \infty $ through $\N \setminus \cNstar$ we have
$$
\nu(B_n) \leq (1- \gamma_n)^{-1} n^{-1} |\Y'_n| \to 0,
$$
which contradicts the conclusion above that
${\bf 1}_{B_n}/\nu(B_n)$ converges in $L^1$ to a limit
of the form ${\bf 1}_B$, for some  Borel $B \subset D$ with
  $0 < \nu(B) < 1$. Therefore $\N \setminus \cNstar$ must be finite.
\end{proof}

\begin{lemm}
\label{lem0106}
Let $\cY'_n$ and $B_n$ be as in Lemma \ref{Apartheid}. 
Set $\cY'_{n,B}:= \cY'_n \cap B_n$. Then
\bea
%\limsup_{n \to \infty} 
%\left(
 \frac{ TV_{\phi, \tr_n} ({\bf 1}_{B_n})}{
\Bal_{n,v,b}(\cY'_{n,B}) } 
%\right)
\leq
( 2 +o(1) )
% \limsup_{n \to \infty}
 \left(\frac{\CHE_{v,b}(G_\phi(\cX_n,r_n))}{n^2 r_n^{d+1}} \right),
\label{0107b}
\eea
and
\bea
\limsup_{n \to \infty} 
\left( \frac{ TV_{\phi, \tr_n} ({\bf 1}_{B_n})}{
\Bal_{n,v,b}(\cY'_{n,B}) } \right)
\leq  \sigma_{\phi} \CHE_{v,b}(D,\rho) < \infty.
\label{0107c}
\eea
\end{lemm}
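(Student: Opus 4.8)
The plan is the following. By Lemma \ref{Npplem} the set $\N\setminus\cNstar$ is a.s.\ finite, so it is enough to treat $n\in\cNstar$, for which $t_n:=\Vol_{n,v}(\cY_n)/\Vol_{n,v}(\cX_n)\in[\gamma_n,1/2]$. For such $n$ I would first chain $(\ref{1121f})$ with $(\ref{1127a})$ (which gives $\Cut_{n,\phi}(\cY_n)\geq\Cut'_{n,\phi}(\cY'_n)\geq Z_n$, the last step because $Z_n$ is a partial sum of $\Cut'_{n,\phi}(\cY'_n)$), with the estimate $(\ref{1015c})$, and with $\tr_n=(1-C'\gamma_n)r_n$ (so that $n^2\tr_n^{d+1}=(1-o(1))n^2r_n^{d+1}$, while also $\tgamma_n,\gamma_n\to0$, so that all prefactors in $(\ref{1015c})$ are $1-o(1)$), to obtain
\be
\frac{\CHE_{v,b}(G_\phi(\cX_n,r_n))}{n^2r_n^{d+1}}
=\frac{\Cut_{n,\phi}(\cY_n)}{n^2r_n^{d+1}\,\Bal_{n,v,b}(\cY_n)}
\geq\frac{1-o(1)}{2}\cdot\frac{TV_{\phi,\tr_n}({\bf 1}_{B_n})}{\Bal_{n,v,b}(\cY_n)}.
\label{planchain}
\ee
Thus $(\ref{0107b})$ reduces to the claim that $\Bal_{n,v,b}(\cY'_{n,B})\geq(1-o(1))\Bal_{n,v,b}(\cY_n)$ as $n\to\infty$ through $\cNstar$; and $(\ref{0107c})$ then follows from $(\ref{0107b})$, Proposition \ref{propupper}, and the standing assumption $0<\CHE_{v,b}(D,\rho)<\infty$.

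The key point is to prove that
\be
\Vol_{n,v}(\cY'_{n,B})=(1+o(1))\,\Vol_{n,v}(\cY_n)\qquad(n\to\infty,\ n\in\cNstar).
\label{planvol}
\ee
For this I would let $B_n^0$ be the union of the boxes black with respect to $\cY_n$. Since the greyscale-removal procedure of Lemma \ref{Apartheid} recolours vertices only in boxes that are grey with respect to $\cY_n$, the boxes making up $B_n^0$ are untouched and remain black with respect to $\cY'_n$, so $B_n^0\subseteq B_n$ and $\cY_n\cap B_n^0=\cY'_n\cap B_n^0\subseteq\cY'_{n,B}\subseteq\cY'_n$. By monotonicity of $\Vol_{n,v}$ in its set argument, and using $(\ref{1013e})$ (which gives $|\cY'_n\triangle\cY_n|=o(n\gamma_n^2)=o(|\cY_n|)$, since $|\cY_n|$ is at least a constant times $\gamma_n n$ for $n\in\cNstar$, invoking $(\ref{1216a})$ when $v=2$; hence $\Vol_{n,v}(\cY'_n)=(1+o(1))\Vol_{n,v}(\cY_n)$), it suffices to show $\Vol_{n,v}(\cY_n\setminus B_n^0)=o(\Vol_{n,v}(\cY_n))$. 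Split $\cY_n\setminus B_n^0$ into the black vertices lying in grey boxes and those lying in white boxes, both taken with respect to $\cY_n$. The grey part has cardinality $o(n\gamma_n^2)$ by $(\ref{1129a})$ and $(\ref{gammacond})$, hence is negligible since $\Vol_{n,v}(\cY_n)\gg\gamma_n^2\,\Vol_{n,v}(\cX_n)$ (invoking $(\ref{1216a})$ when $v=2$). For the white part, let $k$ be the number of black vertices sitting in white boxes. Each such vertex $x$ lies in a box $Q_{i,n}$ which, being white with respect to $\cY_n$, contains at least $(1-2\gamma_n)n\nu(Q_{i,n})\geq c_0\,n\gamma_n^d r_n^d$ white vertices, all at distance at most $r_n$ from $x$ once $n$ is large, by $(\ref{1121a})$, each therefore joined to $x$ by an edge of weight at least $\phi(0)/2$, by $(\ref{1216b})$; since these contributions run over pairwise distinct edges, $\Cut_{n,\phi}(\cY_n)\geq c_1\,k\,n\gamma_n^d r_n^d$ for a constant $c_1>0$. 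On the other hand $(\ref{1013c})$, together with the elementary bound $\Bal_{n,v,b}(\cY_n)\leq C|\cY_n|/n$ (trivial for $v=1$, and from $(\ref{1216a})$ for $v=2$), gives $\Cut_{n,\phi}(\cY_n)\leq C'\,n r_n^{d+1}|\cY_n|$ for large $n$. Comparing the two bounds, $k\leq(C'/c_1)(r_n/\gamma_n^d)|\cY_n|=o(|\cY_n|)$, since $r_n/\gamma_n^d\to0$ by $(\ref{gammacond})$ (indeed $\gamma_n^d\geq\gamma_n^{d+4}\gg r_n$). Translating cardinalities into $\Vol_{n,v}$ via $(\ref{1216a})$ when $v=2$ then yields $(\ref{planvol})$.

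To conclude, set $s_n:=\Vol_{n,v}(\cY'_{n,B})/\Vol_{n,v}(\cX_n)$, so that $(\ref{planvol})$ says $s_n=(1+o(1))t_n$ with $t_n\leq1/2$. An elementary case analysis (distinguishing $s_n\leq1/2$ from $s_n>1/2$) then shows $\Bal_{n,v,1}(\cY'_{n,B})=\min(s_n,1-s_n)\geq(1-o(1))\min(t_n,1-t_n)=(1-o(1))\Bal_{n,v,1}(\cY_n)$, and likewise $\Bal_{n,v,2}(\cY'_{n,B})=s_n(1-s_n)\geq(1-o(1))t_n(1-t_n)=(1-o(1))\Bal_{n,v,2}(\cY_n)$ (using $t_n/(1-t_n)\leq1$). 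Inserting this into $(\ref{planchain})$ gives $(\ref{0107b})$, and hence $(\ref{0107c})$ as noted above. I expect the white-box estimate $k=o(|\cY_n|)$ to be the main obstacle: a priori $|\cY_n|$ could be as small as $\Theta(\gamma_n n)$, so a crude bound on $k$ would not suffice, and it is precisely the second condition in $(\ref{gammacond})$ (forcing $\gamma_n^d\gg r_n$) that makes the argument go through.
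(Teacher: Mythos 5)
Your proof is correct, and it reaches the conclusion by a genuinely different route from the paper's.

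The paper proceeds from (\ref{1015b}) by splitting both the numerator $\Cut'_{n,\phi}(\cY'_n)$ (into $Z_n$ plus the within-box weight $\partial_{n,\phi}^W(\cY'_n)$) and the volume $\Vol_{n,v}(\cY'_n)$ (into the contributions of $\cY'_{n,B}$ and $\cY'_{n,W}$), applies the mediant inequality $\frac{a+c}{b+d}\geq\min(\frac{a}{b},\frac{c}{d})$, and then argues via (\ref{1216b}) and (\ref{1013b}) that the $W$-ratio diverges like $\gamma_n^d r_n^{-1}$, forcing the minimum to be the $B$-ratio; this is carried out separately in (\ref{1121h}) for $b=1$ and (\ref{1218c}) for $b=2$, and is combined with the balance comparison (\ref{1218a}) between $\cY'_n$ and $\cY_n$. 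Your proof instead establishes the volume asymptotic $\Vol_{n,v}(\cY'_{n,B})=(1+o(1))\Vol_{n,v}(\cY_n)$ directly, by a within-box cut argument: the number $k$ of $\cY_n$-black vertices sitting in white boxes is controlled by comparing the lower bound $\Cut_{n,\phi}(\cY_n)\geq c_1 k\,n\gamma_n^d r_n^d$ (within-box edges from such vertices, using (\ref{1216b})) with the upper bound $\Cut_{n,\phi}(\cY_n)=O(nr_n^{d+1}|\cY_n|)$ coming from (\ref{1013c}) and $\Bal_{n,v,b}(\cY_n)=O(|\cY_n|/n)$, yielding $k=o(|\cY_n|)$ precisely because (\ref{gammacond}) forces $r_n\ll\gamma_n^d$; the grey-box contribution is $o(n\gamma_n^2)$ by (\ref{1129a}), and (\ref{0107b}) then follows from the single chain (\ref{planchain}) and the elementary deduction $\Bal_{n,v,b}(\cY'_{n,B})\geq(1-o(1))\Bal_{n,v,b}(\cY_n)$, uniformly over $(v,b)$. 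This buys a more unified treatment of the four $(v,b)$ cases (no separate case split for $b=1$ vs.\ $b=2$), at the cost of a slightly more explicit quantitative bookkeeping of the vertex counts. The underlying mechanism — within-box edges penalising misplaced vertices, matched against the a.s.\ cheap cut — is the same in both arguments.
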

\begin{proof}
Set $V_n := \Vol_{n,v}(\X_n)$ as before.
For all but finitely many $n \in \N$ we have by
Lemma \ref{lemCher} that  $n \geq N$, 
and by Lemma \ref{Npplem}
 that $n \in \cNstar$ (so that $ \Vol_{n,v}(\cY_n) /V_n \in [\gamma_n,1/2]$),  
and by Lemma  \ref{Apartheid} that  $g_n(\cY'_n) =0$ 
  so that for each $i \in S_n$ the
 box $Q_{i,n} $ is either black or white but not both
(with respect to $\cY'_n$), and
 the sets $B_n$ and $W_n:= D \setminus B_n$ are non-empty.
Consider such $n$.

We claim next that whichever value of $ (v,b) \in \{1,2\}^2$ we
are considering, we have
for large enough $n $ that
 \bea
\Bal_{n,v,b}(\cY'_n)/\Bal_{n,v,b}(\cY_n) \geq 1- \gamma_n.
\label{1218a}
\eea
If $v=1$ this follows from 
 (\ref{1013e}) and the fact that $n \in \cN^*$.
If $v=2$, to see (\ref{1218a}) we use also the fact
that by (\ref{1013e}) and (\ref{1216a}) we have
$$
|\Vol_{n,2,b}(\Y'_n) - \Vol_{n,2,b}(\Y_n)|\ll n^2 r_n^d \gamma_n^2
$$
while $\Vol_{n,2}(\cY_n) \geq c \gamma_n n^2 r_n^d$ for some constant
$c >0$, by
the fact that $n \in \cNstar$, and
 (\ref{RGGedgeLLN}).

Hence for large enough $n$, 
by (\ref{1121f}) 
and (\ref{1127a})
we have
\bea
% \frac{
\CHE_{v,b}(G_\phi(\cX_n,r_n))
%}{n^2 r_n^{d+1}}
\geq
\frac{ (1- \gamma_n) 
%(n^2 r_n^{d+1})^{-1} 
\Cut'_{n,\phi}(\cY'_n)}{\Bal_{n,v,b}(\Y'_n)}.
\label{1015b}
\eea
 Let $\Y'_{n,B}:= \cY'_n \cap B_n$ and
$\Y'_{n,W} := \Y'_n \cap W_n$. 
 Let $\partial_{n,\phi}^W(\cY'_n)$ be the
total $\phi_n$-weight of within-box edges
 from $\Y'_{n,W}$ to $\X_n \setminus \Y'_n$.
Then
\bean
%(n^2 r_n^{d+1})^{-1}
\frac{ \Cut'_{n,\phi} (\cY'_n)}{\Vol_{n,v}(\cY'_n)} \geq
 \frac{
%( n^2r_n^{d+1})^{-1} n(\partial_n^B(\cY'_n) + \partial_n^W (\cY'_n))
Z_n + \partial_{n,\phi}^W (\cY'_n) }{\Vol_{n,v}(\cY'_{n,B})+ \Vol_{n,v}(\cY'_{n,W}) }
%\nonumber \\
\geq  \min \left(
 \frac{
%( nr_n^{d+1})^{-1} \partial_n^B(\cY'_n)
Z_n }{ \Vol_{n,v}(\cY'_{n,B}) },
\frac{
%(nr_n^{d+1})^{-1}
 \partial_{n,\phi}^W (\cY'_n)}{ \Vol_{n,v}(\cY'_{n,W}) }
\right).
%\label{1015a}
\eean
%Similarly, setting $\X_{n,B} := \X_n \cap B_n$ and
% $\X_{n,W} := \X_n \cap W_n$, we have
%$$
%\frac{ \Cut'_{n,\phi} (\cY'_n)}{\card (\cX_n \setminus \cY'_n)} \geq
%  \min \left(
% \frac{
%%( nr_n^{d+1})^{-1} \partial_{n,\phi}^B(\cY'_n)
%Z_n }{ \card(\X_{n,W} \setminus \cY'_{n,W}) },
%\frac{
%%(nr_n^{d+1})^{-1}
% \partial_n^B (\cY'_n)}{ \card(\X_{n,B} \setminus \cY'_{n,B}) }
%\right),
%$$
%and combining the last two inequalities yields
Hence
\bea
\frac{
( n^2 r_n^{d+1})^{-1}
 \Cut'_{n,\phi} (\cY'_n)}{\Bal_{n,v,1}(\cY'_n)} \geq
\min \left(
 \frac{
( n^2r_n^{d+1})^{-1} Z_n }{
% \min(\Vol_{n,v}(\cY'_{n,B}), \card(\X_{n,W} \setminus \cY'_{n,W} )) },
 \Vol_{n,v}(\cY'_{n,B})/V_n  },
%\right.  ~~~~~~~~~~
%\nonumber
%\\
%\left.
\frac{
(n^2r_n^{d+1})^{-1}
 \partial_{n,\phi}^W (\cY'_n)}{ \Vol_{n,v}(\cY'_{n,W})/V_n }
%, \frac{
%(nr_n^{d+1})^{-1}
% \partial_{n,\phi}^B (\cY'_n)}{ \card(\X_{n,B} \setminus \cY'_{n,B}) }
\right).
\label{1121h}
\eea
Each vertex $x \in \cY'_{n,W}$ has at least $\gamma_n^d(1-2\gamma_n) \rhomin n r_n^d$
within-box neighbours in $\cX_n \setminus \cY'_n$.
 Therefore there is a constant $c'> 0 $ such that
for $n$ large enough,
 the second ratio inside the minimum in the
right hand side of (\ref{1121h})
is at least $ c' \gamma_n^{d} (1-2 \gamma_n) \rhomin  r_n^{-1} \phi(0) $;
 we may take $c' =1/2$ for $v=1$  
 and otherwise use the fact that by (\ref{1216a}) and (\ref{RGGedgeLLN}) we have
$$
\Vol_{n,2}(\cY'_{n,W})/\Vol_{n,2}(\cX_n) \leq
{\rm const.} \times \Vol_{n,1}(\cY'_{n,W})/\Vol_{n,1}(\cX_n). 
$$
Hence this second ratio tends to infinity by (\ref{gammacond}).
By (\ref{1015b}) and (\ref{1013b}),  
if $b=1$ then the left hand side of (\ref{1121h}) is bounded, so
for large enough $n $ the minimum is achieved by the first of the two ratios in
the right hand side of (\ref{1121h}).

Also,
\bea
\frac{ \Cut'_{n,\phi} (\cY'_n)}{\left(\frac{\Vol_{n,v}(\cY'_n)}{V_n}\right)
 \left(1- 
\frac{\Vol_{n,v}(\cY'_n)}{V_n} \right) }\geq
\frac{Z_n + \partial_n^W(\cY'_n)}{ \left(
 \frac{\Vol_{n,v}(\Y'_{n,B})}{V_n} + 
\frac{\Vol_{n,v}(\Y'_{n,W})}{V_n}
\right)\left(1- \frac{\Vol_{n,v} (\Y'_n) }{V_n} \right) } 
\nonumber \\
\geq \min \left( 
\frac{Z_n}{
\left( \frac{\Vol_{n,v}(\Y'_{n,B})}{V_n}  
\right)\left(1- \frac{\Vol_{n,v} (\Y'_n) }{V_n} \right) } 
,
\frac{\partial_n^W(\cY'_n)}{\frac{\Vol_{n,v} (\Y'_{n,W})}{V_n}}
 \right),
\label{1218c}
\eea
and similarly to before, if $b=2$ then for large
enough $n$ the minimum is achieved by the first
 term. Thus using (\ref{1121h}) for $b=1$ and (\ref{1218c})
for $b=2$, in both cases
we have for large enough $n$  that
\bean
\frac{
%( n^2 r_n^{d+1})^{-1}
 \Cut'_{n,\phi} (\cY'_n)}{\Bal_{n,v,b}(\cY'_n)} \geq
\frac{ 
%( n^2 r_n^{d+1})^{-1} 
Z_n}{
\Bal_{n,v,b}(\cY'_{n,B})
}.
\eean
Therefore using (\ref{1015c}) followed by
(\ref{1015b}) we have that
\bean
 \frac{ TV_{\phi, \tr_n} ({\bf 1}_{B_n})}{
\Bal_{n,v,b}(\cY'_{n,B}) } 
%\right)
 \leq
 \frac{ TV_{\phi, \tr_n} ({\bf 1}_{B_n})
\Cut'_{n,\phi}(\cY'_n)
}{
Z_n  
\Bal_{n,v,b} (\cY'_n)} 
% \limsup_{n \to \infty}
%\left(
\leq
\frac{ (2+o(1)) (n^2 r_n^{d+1} )^{-1} \Cut'_{n,\phi}(\cY'_n)}{\Bal_{n,v,b}
(\cY'_n)} 
%\right) 
\\
\leq (2 +o(1)) 
%\limsup_{n \to \infty}
% \left(
%\frac{
(n^2 r_n^{d+1})^{-1} 
\CHE_{v,b}(G_\phi(\cX_n,r_n)) 
%\right)
.
\eean
This gives us (\ref{0107b}), and then (\ref{0107c})
follows from
 Proposition \ref{propupper}.
\end{proof} 
Using Lemma \ref{lemEgoroff} we obtain the following.
\begin{lemm}
\label{lemBnice}
Let $B_n$ and $\cY'_{n,B}$ be as in the preceding lemma.
Then:

(i) It is the case that $\limsup_{n \to \infty} \nu(B_n) < 1$.

(ii)
For every subsequence of $\N$ such that $\nu(B_n)$
is bounded away from zero along the subsequence,
we have along that subsequence that
\bea
\lim_{n \to \infty} \left( \frac{ 
\Bal_{n,v,b}(\cY'_{n,B})}{\Bal_{\nu,v,b}(B_n) } 
\right) 
=1 .
\label{0106d}
\eea
\end{lemm}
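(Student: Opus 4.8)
The plan is to obtain both parts as consequences of Lemma~\ref{lemEgoroff}, applied with $I_n$ the set of indices $i \in S_n$ for which $Q_{i,n}$ is a black box with respect to $\cY'_n$, and with $\U_n := \cY'_{n,B} = \cY'_n \cap B_n$, so that $B_n = \bigcup_{i \in I_n} Q_{i,n}$. The first task is to check that the hypotheses of that lemma hold. Throughout, work on the almost sure event on which the constructions and conclusions of Lemmas~\ref{lemCher}, \ref{Apartheid} and~\ref{lemEgoroff} all hold. For $n \geq n_4$ we have $g_n(\cY'_n) = 0$, so every box is black or white with respect to $\cY'_n$; and for a black index $i$ the defining inequality $|\cY'_n \cap Q_{i,n}| \geq (1-2\gamma_n) n \nu(Q_{i,n})$ combined with $|\cY'_n \cap Q_{i,n}| \leq |\X_n \cap Q_{i,n}| \leq (1+\gamma_n) n \nu(Q_{i,n})$ from~(\ref{Cher1}) is exactly the requirement~(\ref{0108d}), while $\cY'_{n,B} \subset \X_n \cap B_n$ is immediate. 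I would also record two elementary facts used repeatedly below. First, since $\Vol_{n,v}$ and $\Vol_{\nu,v}$ are additive over disjoint sets (for $v=2$, $\Vol_{n,2}$ is a sum of weighted vertex degrees and $\Vol_{\nu,2}$ is an integral), one checks from the definitions that $\Bal_{n,v,b}(\cY'_{n,B}) = f_b(t_n)$ and $\Bal_{\nu,v,b}(B_n) = f_b(s_n)$, where $t_n := \Vol_{n,v}(\cY'_{n,B})/\Vol_{n,v}(\X_n)$, $s_n := \Vol_{\nu,v}(B_n)/\Vol_{\nu,v}(D)$, and $f_1(t) = \min(t,1-t)$, $f_2(t) = t(1-t)$ are continuous on $[0,1]$ and strictly positive on $(0,1)$. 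Second, since $\rhomin \leq \rho \leq \rhomax$ with $\rhomin > 0$, there are constants $0 < c \leq C < \infty$, depending only on $D$ and $\rho$, with $c\,\nu(B_n) \leq s_n \leq C\,\nu(B_n)$ and $c(1-\nu(B_n)) \leq 1-s_n \leq C(1-\nu(B_n))$ for every $n$ (trivially with $c=C=1$ when $v=1$).

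For part (i), I would argue by contradiction. Suppose $\limsup_n \nu(B_n) = 1$ and pass to an infinite $\cN' \subset \N$ along which $\nu(B_n) \to 1$; then $\inf_{n \in \cN'} \nu(B_n) > 0$ after discarding finitely many indices, so Lemma~\ref{lemEgoroff} yields $t_n / s_n \to 1$ along $\cN'$. Since $1 - s_n \leq C(1-\nu(B_n)) \to 0$, we get $s_n \to 1$ and hence $t_n \to 1$ along $\cN'$. But this contradicts the a priori bound $t_n \leq \Vol_{n,v}(\cY'_n)/\Vol_{n,v}(\X_n) \leq 1/2 + o(1)$: indeed $\cY_n$ was chosen with $\Vol_{n,v}(\cY_n) \leq \Vol_{n,v}(\X_n)/2$, while by~(\ref{1013e}) (using also~(\ref{1216a}) and~(\ref{RGGedgeLLN}) when $v=2$ to pass from vertex counts to $\Vol_{n,2}$) we have $\Vol_{n,v}(\cY'_n \triangle \cY_n) = o(\Vol_{n,v}(\X_n))$, so $\Vol_{n,v}(\cY'_n) \leq \Vol_{n,v}(\cY_n) + \Vol_{n,v}(\cY'_n \triangle \cY_n) \leq (1/2 + o(1))\Vol_{n,v}(\X_n)$, and $\cY'_{n,B} \subset \cY'_n$. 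Hence $\limsup_n \nu(B_n) < 1$.

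For part (ii), fix a subsequence $\cN$ with $\inf_{n \in \cN} \nu(B_n) > 0$. Then $s_n$ is bounded away from $0$ along $\cN$ (by $c\,\nu(B_n) \leq s_n$) and, by part (i), also bounded away from $1$ along $\cN$ (by $c(1-\nu(B_n)) \leq 1-s_n$ together with $\liminf_n(1-\nu(B_n)) > 0$). Lemma~\ref{lemEgoroff} gives $t_n/s_n \to 1$ along $\cN$, and since $0 \leq s_n \leq 1$ this forces $t_n - s_n \to 0$; hence $s_n$ and $t_n$ eventually lie in a common compact subinterval of $(0,1)$. As $f_b$ is uniformly continuous on $[0,1]$ and bounded below by a positive constant on that subinterval, $f_b(t_n)/f_b(s_n) \to 1$ along $\cN$, which is precisely~(\ref{0106d}).

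I expect the only genuinely substantive step to be the a priori bound $\Vol_{n,v}(\cY'_n) \leq (1/2 + o(1))\Vol_{n,v}(\X_n)$ used in part (i): this is what prevents the union of black boxes from swallowing asymptotically all of $D$, and it relies both on the normalisation $\Vol_{n,v}(\cY_n) \leq \Vol_{n,v}(\X_n)/2$ of the minimiser and on the smallness~(\ref{1013e}) of the greyscale-removal perturbation. Everything else is routine verification of the hypotheses of Lemma~\ref{lemEgoroff} and an appeal to the continuity of $f_b$.
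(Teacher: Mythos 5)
Your proof is correct and uses the same key ingredient as the paper — Lemma~\ref{lemEgoroff} applied to $\cY'_{n,B}$ — and your part~(i) matches the paper's argument (the paper also uses the bound $\Vol_{n,v}(\cY'_n) \leq (1/2+o(1))\Vol_{n,v}(\cX_n)$ coming from the normalisation of the minimiser and from~(\ref{1013e})). For part~(ii), the paper applies Lemma~\ref{lemEgoroff} a \emph{second} time, to the white-box side $W_n = D\setminus B_n$, and then combines two asymptotic identities (one for $\cY'_{n,B}$ vs.\ $B_n$, one for the complementary quantities vs.\ $W_n$) with a small bookkeeping estimate on the off-colour vertices to conclude. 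You sidestep this second application by packaging each balance term as a fixed function $f_b$ of a single proportion ($t_n$ for the discrete balance, $s_n$ for the continuum one, using that $\Vol_{n,v}$ is finitely additive on subsets of $\X_n$ and $\Vol_{\nu,v}$ is additive on $\cB(D)$), and observing that $t_n/s_n \to 1$ with $s_n$ bounded away from $0$ and $1$ already forces $t_n - s_n \to 0$, hence $(1-t_n)/(1-s_n)\to 1$ as well — which is exactly what the paper's second appeal to Lemma~\ref{lemEgoroff} delivers. This is a modest but genuine economy: it avoids the second $\Gamma$-type lemma invocation and the associated estimate of white vertices in black boxes, replacing them with an elementary continuity argument for $f_b$ on a compact subinterval of $(0,1)$.
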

\begin{proof}
(i)
It is enough to prove that for every infinite subsequence
$\cN$ of $\N$ with
$\inf_{n \in \cN} \nu(B_n) >0$ we have
   $\limsup \nu(B_n) <1 $ as $n \to \infty$
through $\cN$.
%Consider the case $v=2$.
By (\ref{1013e}) and (in the case $v=2$) (\ref{1216a}) we have
$$
\frac{\Vol_{n,v}(\cY'_n) - \Vol_{n,v}(\cY_n) }{ \Vol_{n,v}(\cX_n) } \to 0,
$$
and since we assume $\Vol_{n,v}(\cY_n) \leq \Vol_{n,v}(\cX_n) /2$,
by Lemma \ref{lemEgoroff}
we have
for large enough $n$  that
\bean
(2/3) \geq \frac{\Vol_{n,v}(\cY'_n)}{\Vol_{n,v}(\cX_n)}
 \geq \frac{\Vol_{n,v}(\cY'_{n,B})}{\Vol_{n,v}(\cX_n)}
= (1+o(1))  \left( \frac{ \int_{B_n} \rho(x)^v dx }{
\int_D \rho(x)^v dx} \right), 
\eean
which gives us part (i).

(ii)
%Again  consider the case $v=2$.
Let $\cN \subset \N$ be  infinite with
  $\inf_{n \in \cN}  \nu(B_n) >0$.
  By Lemma \ref{lemEgoroff} we have
\bea
\lim_{n \to \infty,n \in \cN} \left( 
\frac{\Vol_{n,v}(\cY'_{n,B}) \Vol_{\nu,v}(D) }{
\Vol_{n,v}(B_n) \Vol_{n,v}(\X_n)}
\right) =1.
\label{0108e}
\eea
Set $W_n := D \setminus B_n$ and $\cY'_{n,W}:= \cY'_n \cap W_n$.
By part (i), $\nu(W_n)$ is bounded away from zero and therefore
by  applying  Lemma \ref{lemEgoroff} again,
 we have
\bea
\lim_{n \to \infty,n \in \cN} \left( 
\frac{\Vol_{n,v}(\cY'_{n,W}) \Vol_{\nu,v}(D) }{
\Vol_{n,v}(W_n) \Vol_{n,v}(\X_n)}
\right) =1.
\label{0109a}
\eea
Moreover, since the number of white points in black boxes is
at most $n\gamma_n $, using also  (\ref{1216a})
(in the case $v=2$) we have
$$
%\lim_{n \to \infty,n \in \cN} \left( 
\frac{
\Vol_{n,v}(\X_n \setminus \cY'_{n,B}) - \Vol_{n,v}(\cY'_{n,W}) 
  }{
\Vol_{n,v}(\cX_n)
}
%\right) =0
= \frac{ \Vol_{n,v}( (\cX_n \setminus \cY'_n) \cap B_n)}{\Vol_{n,v}(\cX_n) }
\leq {\rm const.} \times \gamma_n 
$$ 
which tends to zero, 
and hence by (\ref{0109a}) we have
\bean
\lim_{n \to \infty,n \in \cN} \left( 
\frac{\Vol_{n,v}(\cX_n \setminus \cY'_{n,B}) \Vol_{\nu,v}(D) }{
\Vol_{n,v}(W_n) \Vol_{n,v}(\X_n)}
\right) =1.
\eean
By using this, along with (\ref{0108e}), we can obtain (\ref{0106d}).
\end{proof}

\begin{lemm}
\label{lemsubseq}
Let $B_n$ and $\cY'_{n,B}$ be as in the preceding lemma.
For any subsequence of $\N$ there 
 a further subsequence along which the
functions 
%${\bf 1}_{B_n}/\Bal_{\nu,v,b}({B_n})$
$u_n$
converge in $L^1(\nu)$ to ${\bf 1}_A/\Bal_{\nu,v,b}(A)$ 
for some $A \in \cB(D)$ with $0 < \nu(A) <1$.
\end{lemm}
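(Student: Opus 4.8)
The plan is to produce the set $A$ by applying the compactness part of Lemma \ref{LemGam} to the $L^1(\nu)$-normalisation of ${\bf 1}_{B_n}$ and then identifying the limit. Recall that $u_n := {\bf 1}_{B_n}/\Bal_{n,v,b}(\cY'_{n,B})$, so that by the homogeneity of $TV_{\phi,\tr_n}$ we have $TV_{\phi,\tr_n}(u_n) = TV_{\phi,\tr_n}({\bf 1}_{B_n})/\Bal_{n,v,b}(\cY'_{n,B})$, which stays bounded by (\ref{0107c}). The first step is to show $\Bal_{n,v,b}(\cY'_{n,B}) = \Theta(\nu(B_n))$: since $g_n(\cY'_n)=0$ for all large $n$ (Lemma \ref{Apartheid}), every black box $Q_{i,n}\subseteq B_n$ contains at least $(1-2\gamma_n)n\nu(Q_{i,n})$ points of $\cY'_n$ (by definition of a black box) and at most $(1+\gamma_n)n\nu(Q_{i,n})$ points of $\cX_n$ (by (\ref{Cher1})), so $|\cY'_{n,B}| = \Theta(n\nu(B_n))$; feeding this into (\ref{1216a}) and (\ref{RGGedgeLLN}) when $v=2$, and using that $\nu(D\setminus B_n)$ is bounded away from $0$ (Lemma \ref{lemBnice}(i)), a short case check over $(v,b)\in\{1,2\}^2$ produces constants $0<c_0\le C_0<\infty$ with $c_0\nu(B_n)\le\Bal_{n,v,b}(\cY'_{n,B})\le C_0\nu(B_n)$ for all large $n$. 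Consequently $w_n:={\bf 1}_{B_n}/\nu(B_n)$ satisfies $\|w_n\|_{L^1(\nu)}=1$ and $TV_{\phi,\tr_n}(w_n)\le c_0^{-1}TV_{\phi,\tr_n}(u_n)$ bounded, while $\|u_n\|_{L^1(\nu)}=\nu(B_n)/\Bal_{n,v,b}(\cY'_{n,B})\in[C_0^{-1},c_0^{-1}]$.

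Now fix an infinite $\cN\subseteq\N$. Since $0<\tr_n\le r_n\ll1$ for large $n$, Lemma \ref{LemGam}(iii) applied to $(w_n)$ yields an infinite $\cN'\subseteq\cN$ along which $w_n\to w$ in $L^1(\nu)$ for some $w$ with $\|w\|_{L^1(\nu)}=1$; passing if necessary to a further subsequence we may assume $w_n\to w$ $\nu$-a.e.\ as well. The key step is to deduce that $w$ is $\nu$-a.e.\ a positive multiple of an indicator and that $\nu(B_n)$ has a strictly positive limit. For $\nu$-a.e.\ $x$ with $w(x)>0$ one has $x\in B_n$ for all large $n\in\cN'$, so $w_n(x)=1/\nu(B_n)\to w(x)$; as the left-hand side does not depend on $x$, $w$ equals a single value $1/\beta_0\in(0,\infty)$ $\nu$-a.e.\ on $\{w>0\}$, and $\nu(B_n)\to\beta_0$. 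Thus $w={\bf 1}_A/\beta_0$ with $A:=\{w>0\}$, where $\beta_0>0$ because $w$ is finite a.e.\ and $w\ne0$, and $\beta_0=\lim_{n\in\cN'}\nu(B_n)<1$ by Lemma \ref{lemBnice}(i); since $\|w\|_{L^1(\nu)}=\nu(A)/\beta_0=1$ we get $0<\nu(A)=\beta_0<1$. (This is exactly the step at which \cite{consistency} invokes their Lemma 6; it is also where the argument would collapse if $\nu(B_n)$ could tend to $0$.) It further follows, from ${\bf 1}_{B_n}=\nu(B_n)w_n\to\beta_0w={\bf 1}_A$ in $L^1(\nu)$, that $\nu(B_n\triangle A)\to0$ along $\cN'$.

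Finally I would upgrade this to the claimed convergence of $u_n$. Since $\nu(B_n)$ is bounded away from $0$ along $\cN'$, Lemma \ref{lemBnice}(ii) gives $\Bal_{n,v,b}(\cY'_{n,B})/\Bal_{\nu,v,b}(B_n)\to1$; and because $\rho$ is bounded above and below on $D$, $\nu(B_n\triangle A)\to0$ forces $|B_n\triangle A|\to0$, whence $\Vol_{\nu,v}(B_n)\to\Vol_{\nu,v}(A)$ and $\Vol_{\nu,v}(D\setminus B_n)\to\Vol_{\nu,v}(D\setminus A)$, so $\Bal_{\nu,v,b}(B_n)\to\Bal_{\nu,v,b}(A)$, which is strictly positive as $0<\nu(A)<1$. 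Hence $\Bal_{n,v,b}(\cY'_{n,B})\to\Bal_{\nu,v,b}(A)\in(0,\infty)$ along $\cN'$, and combining this with ${\bf 1}_{B_n}\to{\bf 1}_A$ in $L^1(\nu)$ yields $u_n={\bf 1}_{B_n}/\Bal_{n,v,b}(\cY'_{n,B})\to{\bf 1}_A/\Bal_{\nu,v,b}(A)$ in $L^1(\nu)$ along $\cN'$, with $0<\nu(A)<1$, as required. I expect the main obstacle to be keeping the normalised functions from degenerating in the limit: one must exclude $\nu(B_n)\to0$, and that is precisely what the comparison $\Bal_{n,v,b}(\cY'_{n,B})=\Theta(\nu(B_n))$ from the first step (resting on the per-box population bounds and part (i) of Lemma \ref{lemBnice}) delivers.
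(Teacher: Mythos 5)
Your proof is correct and follows essentially the same route as the paper's: apply the compactness part of Lemma \ref{LemGam} to a normalised version of ${\bf 1}_{B_n}$, with the $TV$ bound supplied by (\ref{0107c}) and the $L^1$ bound supplied by a balance/volume comparison, then identify the subsequential limit as a scaled indicator. Two small points of divergence are worth flagging. First, the paper defines $u_n := {\bf 1}_{B_n}/\Bal_{\nu,v,b}(B_n)$, whereas you take $u_n := {\bf 1}_{B_n}/\Bal_{n,v,b}(\cY'_{n,B})$; this is harmless, since once $\nu(B_n)$ is bounded away from zero along the subsequence Lemma \ref{lemBnice}(ii) makes the two normalisers asymptotically interchangeable, and you handle the conversion explicitly at the end. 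Second, where the paper reduces matters to the one-sided inequality (\ref{0106a}) and then simply cites Lemma 6 of \cite{consistency} to identify the $L^1$-limit as (a multiple of) an indicator, you instead establish a two-sided estimate $\Bal_{n,v,b}(\cY'_{n,B}) = \Theta(\nu(B_n))$ and reproduce the a.e.-convergence argument in full (pass to an a.e.-convergent further subsequence, observe that on $\{w>0\}$ one has $w_n(x) = 1/\nu(B_n)$ for all large $n$, deduce $w$ is a positive constant on $\{w>0\}$ and $\nu(B_n)$ converges). That self-contained version is correct and arguably cleaner to read, at the price of a bit more bookkeeping than the paper's citation.
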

\begin{proof}
We claim that it suffices to prove that
\bea
\limsup_{n \to \infty} \left( \frac{ 
\Bal_{n,v,b}(\cY'_{n,B})}{\Bal_{\nu,v,b}(B_n) } 
\right) < \infty.
\label{0106a}
\eea
Indeed,  suppose 
 (\ref{0106a}) holds and for 
$n \in \N$
 define the
function $u_n :=
{\bf 1}_{B_n}/\Bal_{\nu,v,b}({B_n})$.  Then by (\ref{0106a}) and (\ref{0107c})
the sequence $TV_{\phi,\tr_n}(u_n)$ is bounded, so we can 
apply Lemma \ref{LemGam}  (iii), to deduce that for any subsequence of $\N$
 there exists a further subsequence along which
the functions ${\bf 1}_{B_n}/\Bal_{\nu,v,b}(B_n)$ converge
 in $L^1(\nu)$ to a limiting function
which must necessarily be of the form  ${\bf 1}_A/\Bal_{\nu,v,b}(A)$ with
$A \in \cB(D)$ and $0< \nu(A)<1$ (see Lemma 6 of \cite{consistency}).

It remains to prove (\ref{0106a}).
First suppose $v=1$. By (\ref{Cher1}) we have
for large enough $n$ that
\bea
|\cY'_{n,B}| \leq 2 n \nu(B_n).
\label{0106b}
\eea
Also, by (\ref{Cher1}) and Lemma \ref{lemBnice},
for large enough $n$ we have
$$
|\cX_n \setminus \cY'_{n,B}| \leq 2n \nu(W_n) + \gamma_n n \nu(B_n) 
\leq 3n \nu(W_n),
$$
and combined with (\ref{0106b}) this gives us
(\ref{0106a}) in the case $v=1$ (either for $b=1$ or $b=2$).

Now suppose $v=2$. Set $V_n := \Vol_{n,2}(\cX_n)$. Using
(\ref{0106b}) and (\ref{1216a}) we have that   
\bea
V_n^{-1} \Vol_{n,2} (\cY'_{n,B}) \leq {\rm const.} \times \nu(B_n).
\label{0106c}
\eea
Since $\nu(B_n)$ is bounded away from 1 by Lemma \ref{lemBnice} (i),
using (\ref{1216a}) and (\ref{Cher1}) 
we have
$$
\limsup \left( \frac{ \Vol_{n,2}(\cX_n \setminus \cY'_{n} ) }{
\nu(W_n) V_n } \right) < \infty
$$
and combined with (\ref{0106c}) this gives us (\ref{0106a}) for
$v=2$.
\end{proof}

\begin{proof}[Proof of Theorem \ref{mainthm}]
By Lemma \ref{lemsubseq}, 
for any subsequence of 
$\N$
 there exists a further subsequence along which
the functions ${\bf 1}_{B_n}/\Bal_{\nu,v,b}(B_n)$ converge
 in $L^1(\nu)$ to a limiting function
%which must necessarily be
 of the form  ${\bf 1}_A/\Bal_{\nu,v,b}(A)$ with
$A \in \cB(D)$ and $0< \nu(A)<1$. 
%(see Lemma 6 of \cite{consistency}).
Then $\nu(B_n)$ is bounded away from zero.
%and hence by Lemma \ref{lemBnice}.

 Then by part (i) of Lemma \ref{LemGam},
and Lemma \ref{lemBnice}, we have along this subsequence that
\bean
 \sigma_\phi TV ( {\bf 1}_{A}/ \Bal_{\nu,v,b}(A))
\leq
  \liminf_{n \to \infty}  TV_{\phi,\tr_n}({\bf 1}_{B_n}/\Bal_{\nu,v,b}
(B_n))
 \nonumber \\
\leq  \liminf_{n \to \infty}
TV_{\phi,\tr_n}({\bf 1}_{B_n} / \Bal_{n,v,b}(\cY'_{n,B}) ).
\eean
Therefore
by (\ref{0107b}),
%where the last inequality is from
followed by (\ref{1013b}),
 we have along this subsequence that
\bea
 (\sigma_\phi/2)TV ( {\bf 1}_{A}/ \Bal_{\nu,v,b}(A))
\leq \liminf_{n \to \infty} \left(
  \frac{\CHE_{v,b}(G_\phi(\X_n,r_n))}{n^2 r_n^{d+1}} \right)
\nonumber
 \\
\leq \limsup_{n \to \infty} \left(
  \frac{\CHE_{v,b}(G_\phi(\X_n,r_n))}{n^2 r_n^{d+1}} \right)
\leq  \CHE_{v,b}(D,\rho) \sigma_\phi/2.
\label{1017b}
\eea
By the definition (\ref{Chee1}) the 
%last two
 inequalities in (\ref{1017b}) are all  equalities and the set $A$ is a
minimiser in (\ref{Chee1}).
This gives us the asserted convergence (\ref{1011a}).
%Then, moreover, the fact that $A$ is a minimiser and the
% inequality (\ref{1017b}) (along a subsequence of any subsequence) show that
%(\ref{1026b}) holds.
\end{proof}

\begin{proof}
[Proof of Theorem \ref{Weaktheo}]
To prove this we re-examine the preceding proof.
For each $n \in \N$,
let $\cY_n \subset \cX_n$ be a minimiser as
in the definition (\ref{Chee0}) of $\CHE_{v,b}(G_\phi(\cX_n,r_n))$
with $\Vol_{n,v}(\cY_n) / \Vol_{n,v}(\cX_n) \leq 1/2 ,$ 
as before.

Let $\cY'_n$ and $B_n$ be as in the previous proof.
As shown there, for every subsequence there is a further subsequence
along which ${\bf 1}_{B_n} \to {\bf 1}_A$ in $L^1(\nu)$ for some optimising
set $A$.

Let $\mu_n := \sum_{y \in \cY_n} n^{-1} \delta_{y} $ and
let $\mu'_n := \sum_{y \in \cY'_n} n^{-1} \delta_{y} $.
To demonstrate (\ref{1018d}),
 we need
to show that the sequence $(\mu_n)$ of measures converges weakly to the restriction of $\nu$ to  $A$.
By the Portmanteau theorem \cite{Bill}, it is enough to show that
for any uniformly continuous function $f$ on $D$ we have
$\mu_n(f) \to \nu(f {\bf 1}_A)$. Since any such $f$ is bounded, we have by
(\ref{1013e}) 
%and (\ref{1013c})
 that $n^{-1} |\mu'_n(f) - \mu_n(f)| \to 0$.

%For $\cY'_n$ we have a black region $B_n$ and a white region $W_n$.
On $W_n := D \setminus B_n$ the density of points relative
to the measure $n \nu$ is at most $\gamma_n$; that is,
$\mu'_n(W_n) \leq \gamma_n \nu(D )$
which tends to zero.

Since $f$ is uniformly continuous, given $\eps >0$ we can find $n_0$ such that
for $n \geq n_0$ we have for all $i \in S_n$ that $\overline{f}_{i,n} := \sup_{Q_{i,n}} f $
and $\underline{f}_{i,n} := \inf_{Q_{i,n}} f$ satisfy $\overline{f}_{i,n} - \underline{f}_{i,n} < \eps$.
Then for $n \geq n_0$, setting $\fmax = \sup_{x \in D}f(x)$, we have
\bean
\mu'_n(f)
\leq \left( \sum_{\{i: Q_{i,n} \subset B_n\}} (1+\gamma_n)  \nu(Q_{i,n} ) \overline{f}_{i,n} \right)
+  \fmax \mu'_n(W_n)
\\
\leq (1+\gamma_n) \int_{A}(f(x)  + \eps) \nu(dx) + o(1),
\eean
and therefore
\bea
\limsup_{n \to \infty} \mu'_n(f)
\leq \int_{B}(f(x)  + \eps) \nu(dx).
\label{1018c}
\eea
Also,
\bean
\mu'_n(f)
\geq  \sum_{\{i: Q_{i,n} \subset B_n\}} (1-2\gamma_n)  \nu(Q_{i,n} ) \underline{f}_{i,n}
\geq (1-2\gamma_n) \int_{B_n}(f(x)  - \eps) \nu(dx)
\eean
so that
\bea
\liminf_{n \to \infty} \mu'_n(f)
\geq \int_{A}(f(x)  - \eps) \nu(dx).
\label{0502a}
\eea
Combining this with (\ref{1018c}) gives us  (\ref{1018d}).
\end{proof}

\begin{proof}[Proof of Corollary \ref{corowk}]
Assume that the hypotheses of Theorem \ref{mainthm} apply,
and also that the minimising set
$A$   in the definition (\ref{Chee1}) of $\CHE_{v,b}(D,\rho)$
is unique, up to complementation and adding or removing sets of
$(d-1)$-dimensional measure zero.
%Suppose the hypotheses of Theorem \ref{mainthm} hold.
%It remains to establish the last part of  Theorem \ref{Weaktheo}. 
We shall use the following. Set $\nu_n: = \sum_{y \in \X_n } n^{-1} \delta_y$.
Given any uniformly continuous function $f$ on $D$, 
%and $\eps >0$, 
similarly to (\ref{1018c}) and (\ref{0502a}) 
it can be shown that
%there exists $n_0 \in \N$ such that for $n \geq n_0$ we have
%\bean
%\nu_n(f) \leq (1 + \gamma_n) \sum_{i \in S_n} \nu(Q_{i,n}) \overline{f}_{i,n}
%\leq (1+ \gamma_n) (\nu(f) + \eps);
%%$$
%%$$
%\\
%\nu_n(f) \geq (1 - 2 \gamma_n) \sum_{i \in S_n} \nu(Q_{i,n}) 
%\underline{f}_{i,n}
%\leq (1 - \gamma_n) (\nu(f) - \eps),
%%$$
%\eean
%and therefore
\bea
\nu_n(f) \to \nu(f).
\label{0502b}
\eea
We also use the fact that the topology of 
 weak convergence of probability measures on 
$D$ is metrizable by the Prohorov metric (here denoted $d_\pi$) 
on the space of such measures. 
See \cite[page 72]{Bill}, where a definition
of this metric can also be found.

%Assume now that the minimising set
%$A$   in the definition (\ref{Chee1}) of $\CHE_{v,b}(D,\rho)$
%is unique, up to complementation and adding or removing sets of
%$(d-1)$-dimensional measure zero.
Let ${\cal Y}_n$ be a sequence of minimisers 
%as before.
in the definition (\ref{Chee0}) of $\CHE_{v,b}(G_\phi(\cX_n,r_n))$.
% Write $d_\pi$ for the Prohorov metric on probability
%measures on $D$.
 Then we claim that
\bea
\min( d_\pi( \mu_n, \nu|_A ), d_\pi(\mu_n,\nu|_{A^c} )) \to 0.
\label{0426d}
\eea
%If not, then there is an infinite sequence $\cN$ and an $\eps >0$
%such that
%\bea
%\min( d_\pi( \mu_n, \nu|_A ), d_\pi(\mu_n,\nu|_{A^c} )) > \eps, ~~~ n \in \cN.
%\label{0426c}
%\eea
% But then By
It is straightforward to deduce this from
  the part of Theorem \ref{Weaktheo} already proved,
 along with our uniqueness assumption regarding $A$, 
%two possibilities must hold along some further infinite subsequence
% $\cN' \subset \cN$:
%\bea
%\mu_n \to \nu|_A, ~~~ \mbox{ or } ~~~
%\mu_n \to \nu|_{A^c}.
%%, ~~~
%%\mu_n^* \to \nu|_A, ~~~
%%\mu_n^* \to \nu|_{A^c},
%\label{0426b}
%\eea
noting also that if we add or remove a Lebesgue-null  set
to/from $A$, the measure $\nu|_A$ is unchanged.
%However, both of these two possibilities
%are explicitly ruled out by (\ref{0426c}).
%Thus we have a contradiction so (\ref{0426d}) holds as claimed.
Now
take
$$
j(n) = \begin{cases} 1 \mbox{ if } d(\mu_n,\nu|_A) \leq d(\mu_n,\nu|_{A^c}) 
\\ 
0 \mbox{ otherwise}.
\end{cases}
$$

On the sequence of $n$ for which $j(n) =1$ (if this sequence is
infinite), by (\ref{0426d}) we have $d_\pi(\mu_n, \nu|_{A}) \to 0$ so 
$\mu_n $ converges weakly to $ \nu|_{A}$.

On the sequence of $n$ for which $j(n) =0$ (if this sequence is
infinite), by (\ref{0426d}) we have $d_\pi(\mu_n, \nu|_{A^c}) \to 0$ so
 $\mu_n $ converges weakly to $\nu|_{A^c}$.
Then using (\ref{0502b}),
we have for any uniformly continuous function $f$ on $D$
that $(\nu_n - \mu_n)(f) \to \nu|_{A}(f)$, so $\nu_n -\mu_n$ converges
weakly to $\nu|_A$.

Putting the last two paragraphs together gives us the desired
conclusion. 
\end{proof}

\section{The bisection problem}
\label{secbisec}
\allco

In this section we prove  Theorem \ref{thmBISBHH}.  The result is
immediate from Lemmas \ref{lembisub} and \ref{lembislb}.
We assume throughout this section that the assumptions
of Theorem \ref{thmBISBHH} apply.
\begin{lemm}
\label{lembisub}
It is the case that
\bea
\limsup_{n \to \infty} \left(
 \frac{ \MBIS(G_\phi(\cX_n,r_n)) }{n^2 r_n^{d+1} } \right)
 \leq (\sigma_{\phi}/2)
\MBIS_\nu(D),
~~~ a.s.
\label{eqbisub}
\eea
\end{lemm}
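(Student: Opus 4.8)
The plan is to establish the upper bound \eqref{eqbisub} by exhibiting, for each suitable set $A \subset D$ with $\nu(A) = 1/2$, a near-optimal bisection of $\cX_n$ whose cut weight is asymptotically $(\sigma_\phi/2) n^2 r_n^{d+1} TV(\mathbf{1}_A)$, and then taking the infimum over such $A$. The natural candidate is $\cY_n := \cX_n \cap A$, adjusted by moving a few points across the partition so that $|\cY_n|$ becomes exactly $\lfloor n/2 \rfloor$. First I would fix $\eps > 0$ and, by the definition \eqref{MBISDdef} of $\MBIS_\nu(D)$, choose $A \in \cB(D)$ with $\nu(A) = 1/2$ and $TV(\mathbf{1}_A) \le \MBIS_\nu(D) + \eps$. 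Writing $\cY_n^0 := \cX_n \cap A$, the strong law of large numbers gives $|\cY_n^0|/n \to \nu(A) = 1/2$ a.s., so $\big| |\cY_n^0| - \lfloor n/2 \rfloor \big| = o(n)$ a.s.; indeed by a Chernoff or CLT-type bound it is $O(\sqrt{n \log n})$ a.s., which is all we need.

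Next I would repair the imbalance: form $\cY_n$ from $\cY_n^0$ by moving $\big| |\cY_n^0| - \lfloor n/2 \rfloor \big|$ points between $\cY_n^0$ and its complement (whichever direction is needed), chosen arbitrarily, so that $|\cY_n| = \lfloor n/2 \rfloor$. Each such move changes $\Cut_{n,\phi}$ by at most the total weight of edges at a single vertex, which by a first-moment or Chernoff argument is $O(n r_n^d)$ uniformly over vertices, a.s.\ for large $n$ (this is essentially the content of \eqref{1216a} in Lemma \ref{vollem}, applied to a single vertex under weight function $\ophin$). Hence
\bea
|\Cut_{n,\phi}(\cY_n) - \Cut_{n,\phi}(\cY_n^0)| = O\big( \sqrt{n \log n} \cdot n r_n^d \big) = o(n^2 r_n^{d+1}),
\nonumber
\eea
where the last step uses $n r_n^d \gg \log n$, equivalently $\sqrt{\log n} \ll \sqrt{n r_n^d}$, so that $\sqrt{n \log n}\, n r_n^d = n^2 r_n^{d+1} \cdot \sqrt{(\log n)/(n r_n^d)} = o(n^2 r_n^{d+1})$. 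Therefore it suffices to control $\Cut_{n,\phi}(\cY_n^0) = \Cut_{n,\phi}(\cX_n \cap A)$, and for this I would invoke \eqref{1103a} from the proof of Lemma \ref{thm1}, which gives $(n^2 r_n^{d+1})^{-1} \Cut_{n,\phi}(\cX_n \cap A) \to (\sigma_\phi/2) TV(\mathbf{1}_A)$ a.s. Combining,
\bea
\limsup_{n \to \infty} \frac{\MBIS(G_\phi(\cX_n,r_n))}{n^2 r_n^{d+1}} \le \limsup_{n \to \infty} \frac{\Cut_{n,\phi}(\cY_n)}{n^2 r_n^{d+1}} = (\sigma_\phi/2) TV(\mathbf{1}_A) \le (\sigma_\phi/2)(\MBIS_\nu(D) + \eps),
\nonumber
\eea
and letting $\eps \downarrow 0$ yields \eqref{eqbisub}.

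There is one subtlety to address: in \eqref{1103a} the set $A$ is fixed, but here $A$ depends on $\eps$; however $\eps$ is also fixed before taking $n \to \infty$, so there is no issue — for each of a countable sequence $\eps = \eps_k \downarrow 0$ one picks $A = A_k$ and applies \eqref{1103a} to that single set, and the exceptional null set is a countable union. The one genuine point requiring a line of justification is the uniform-in-vertices bound on the weight at a single vertex, i.e.\ $\max_{x \in \cX_n} \sum_{y \in \cX_n \setminus\{x\}} \phi(|x-y|/r_n) = O(n r_n^d)$ a.s.; I expect this to be the main (though modest) obstacle, but it follows from the same Chernoff-bound machinery as \eqref{1216a} in Lemma \ref{vollem} — cover the ball $B(x; K r_n)$ for suitable $K$ (or, if $\phi$ has unbounded support, sum the geometric-type tail of $\phi$ over dyadic annuli) by boxes $Q_{i,n}$, bound the point count in each box via Lemma \ref{lemCher}, and sum. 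Note also that $A$ need only have $TV(\mathbf{1}_A) < \infty$ for \eqref{1103a} to apply, and such sets are available in the infimum \eqref{MBISDdef} since $\MBIS_\nu(D) < \infty$ (take, e.g., a half-space slicing $D$ so that each side has $\nu$-measure $1/2$, which has finite $TV$ by \eqref{0102a}).
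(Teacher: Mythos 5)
Your proposal follows essentially the same route as the paper's proof: take $\cY_n^0 = \cX_n\cap A$ for $A$ with $\nu(A)=1/2$, apply (\ref{1103a}) to get the cut asymptotics, use a Chernoff bound to show $|M_n|=|\cY_n^0|-\lfloor n/2\rfloor = O(\sqrt{n\log n})$ a.s., swap $O(\sqrt{n\log n})$ vertices while bounding the resulting cut change by $O(n r_n^d)$ per vertex via (\ref{1216a}), and then optimize over $A$ using (\ref{MBISDdef}). One small slip: the identity $\sqrt{n\log n}\,nr_n^d = n^2 r_n^{d+1}\sqrt{(\log n)/(n r_n^d)}$ is only valid when $d=2$ — the correct ratio is $(n^{-1}\log n)^{1/2}r_n^{-1} = ((nr_n^d/\log n)^{-1/2})r_n^{(d-2)/2}$, which still tends to zero for all $d\geq 2$ since $nr_n^d\gg\log n$, so your conclusion stands.
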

\begin{proof}
%[Proof of Theorem \ref{thmBISBHH}] We start with the upper bound.
Let $A \in \cB(D) $ with $\nu(A) =1/2$.
Set $\cY_n = \cX_n \cap A$.
Then by (\ref{1103a}),
\bea
(n^2 r_n^{d+1})^{-1} \Cut_{n,\phi}(\cY_n) \to (\sigma_\phi/2) TV({\bf 1}_A). 
% = \MBIS_\nu (D).
\label{1018f}
\eea
Also $n^{-1} |\cY_n| \to 1/2$ by the strong law of large numbers,
but of course this does not tell us that $|\cY_n| = \lfloor n/2 \rfloor$.
Set $M_n:= |\cY_n| - \lfloor n/2 \rfloor$.
Using the Chernoff bounds (\ref{CherUB}) and (\ref{CherLB}),
Taylor's Theorem (as in the proof of Lemma \ref{lemCher})
and the Borel-Cantelli lemma, we have
almost surely that for large enough $n$, 
\bea
|M_n | \leq 3 (n \log n)^{1/2}
\label{1218e}
\eea

As in the preceding section, we shall say that points in
$\cY_n$ are black and points in $\cX_n \setminus \cY_n$ are
white.
%Our strategy will be to adjust the set $\cY_n$ by changing  $M_n$ points
%from black to white (if $M_n >  0$) or
%changing $-M_n $ points from white to black (if $M_n <0$) without
%affecting the cut too much.
%
%
%
%Thus by (\ref{1218e}), for large enough $n$, 
If $M_n <0$ let us
pick $|M_n|$  points in $\cX_n \setminus \cY_n$, 
%(i.e., white points), 
and add
them to $\cY_n$
(i.e., change their colour from white to  black).
If $M_n >0$,
% we can (and do)
pick $M_n$   points in $\cY_n$, and remove them from $\cY_n$ (i.e., change
their colour from black to white).
%black.
 In
both cases let $\cY'_n$ be the resulting set of black points.
Then $|\cY'_n| = \lfloor n/2 \rfloor$.

By  (\ref{1216a}),
the total weight  of cut edges created or destroyed  by changing
from $\cY$ to $\cY'$  is at most
a constant times $nr_n^d |M_n|$.
Therefore using (\ref{1218e}) we have 
%the number
%of edges of $G(\cX_n,r_n)$ which cross the boundary of $R_n$, so is at
%most a constant
%times $n^{-(d-1)/(4d)} n^2 r_n^{d+1} $, so
\bean
%\lim_{n \to \infty}
 (n^2 r_n^{d+1} )^{-1} | \Cut_{n,\phi}(\cY'_n) - \Cut_{n,\phi}(\cY_n)|
= O(  (n^{-1} \log n)^{1/2} r_n^{-1} ) 
\\ 
= O(( n r_n^d /\log n)^{-1/2} r_n^{(d-2)/2}) 
\eean
which tends to zero by the assumptions $nr_n^d \gg \log n$ and
$d \geq 2$.
Combined with (\ref{1018f}) this shows that
\bean
\limsup_{n \to \infty} \left( \frac{\MBIS(G_\phi(\X_n,r_n))}{n^2 r_n^{d+1} } \right)
\leq
\limsup_{n \to \infty} \left(
 \frac{\Cut_{n,\phi}(\cY'_n)}{n^2 r_n^{d+1} }  \right) =
 %\MBIS_{\nu}(D) .
 (\sigma_\phi/2) TV({\bf 1}_A).
% \MBIS_{\nu}(D).
%\label{1018g}
\eean
Taking the infimum over all $A$ and using
 (\ref{MBISDdef}),  this gives us (\ref{eqbisub}).
\end{proof}

\begin{lemm}
\label{lembislb}
It is the case that
\bea
\liminf_{n \to \infty}
\left(
 \frac{ \MBIS(G_\phi(\cX_n,r_n)) }{n^2 r_n^{d+1} }
\right)
 \geq (\sigma_{\phi}/2)
\MBIS_\nu(D),
~~~ a.s.
\label{1218f}
\eea
\end{lemm}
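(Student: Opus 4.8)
The plan is to transcribe the lower-bound argument from the proof of Theorem~\ref{mainthm}; here it is in fact simpler, since the bisection constraint plays the role of the balance term and no splitting into the regimes $\cNstar$ and $\N\setminus\cNstar$ is needed. For each $n$ I would fix $\cY_n\subset\cX_n$ with $|\cY_n|=\lfloor n/2\rfloor$ attaining the minimum in (\ref{0107a}), so that $\Cut_{n,\phi}(\cY_n)=\MBIS(G_\phi(\cX_n,r_n))$; by Lemma~\ref{lembisub} we have, almost surely, $\limsup_n (n^2r_n^{d+1})^{-1}\Cut_{n,\phi}(\cY_n)<\infty$, and we work along a subsequence $\cM\subset\N$ realising the liminf in (\ref{1218f}).

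First I would carry out greyscale removal exactly as in Lemma~\ref{Apartheid}. Arguing as for (\ref{1129a}) with a diverging sequence $K_n=\Theta(\gamma_n^{-2d-2})$ and using the a.s.\ bound $\Cut_{n,\phi}(\cY_n)=O(n^2r_n^{d+1})$, the number of grey boxes with respect to $\cY_n$ must be $o(\gamma_n^{-2d-2}r_n^{1-d})$, since otherwise the within-box cut edges alone (each of weight at least $\phi(0)/2$ by (\ref{1216b})) would force $\Cut_{n,\phi}(\cY_n)\gg n^2r_n^{d+1}$, contradicting Lemma~\ref{lembisub}. Recolouring the vertices of each grey box monochromatically, one box at a time, so as never to increase $\Cut'_{n,\phi}$ (the local optimisation of Lemma~\ref{Apartheid}: the between-box contribution of a box is a linear function of its number of black vertices, hence minimised at an endpoint) produces $\cY'_n$ with $g_n(\cY'_n)=0$, with every black (resp.\ white) box for $\cY_n$ still black (resp.\ white) for $\cY'_n$, with
$$
\Cut_{n,\phi}(\cY_n)\ \ge\ \Cut'_{n,\phi}(\cY_n)\ \ge\ \Cut'_{n,\phi}(\cY'_n)
$$
(the first inequality by (\ref{1121g}) and (\ref{1121d})), and with $n^{-1}|\cY'_n\triangle\cY_n|\ll\gamma_n^2$ as in the derivation of (\ref{1013e}), using (\ref{gammacond}); in particular $|\cY'_n|=\lfloor n/2\rfloor+o(n)$. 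Let $B_n$ be the union of the black boxes for $\cY'_n$ and $W_n:=D\setminus B_n$. Since every box is now black or white, at most $3\gamma_n n$ black vertices lie outside $B_n$; combining this with Lemma~\ref{lemCher} applied box by box over $B_n$, the number of black vertices in $B_n$ is both $(1+o(1))\,n\nu(B_n)$ and $|\cY'_n|-O(\gamma_n n)=n/2+o(n)$, whence $\nu(B_n)\to1/2$.

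Finally, it remains to bound the cut and pass to the continuum. Let $Z_n$ be the contribution to $\Cut'_{n,\phi}(\cY'_n)$ from edges with one endpoint in $B_n$ and one in $W_n$; counting black vertices in black boxes and white vertices in white boxes and applying Lemma~\ref{lemphin} exactly as in the derivation of (\ref{1015c}) gives
$$
\Cut_{n,\phi}(\cY_n)\ \ge\ Z_n\ \ge\ (1-2\gamma_n)^2(1-\tgamma_n)\,n^2\tr_n^{d+1}\Bigl(\tfrac12\,TV_{\phi,\tr_n}({\bf 1}_{B_n})\Bigr).
$$
Since $\tr_n\sim r_n\ll1$ and $\Cut_{n,\phi}(\cY_n)=O(n^2r_n^{d+1})$ along $\cM$, this forces $TV_{\phi,\tr_n}({\bf 1}_{B_n})=O(1)$ along $\cM$, and the ${\bf 1}_{B_n}$ are bounded in $L^1(\nu)$; by the compactness part~(iii) of Lemma~\ref{LemGam}, along a further subsequence $\cM'\subset\cM$ we have ${\bf 1}_{B_n}\to{\bf 1}_A$ in $L^1(\nu)$ for some $A\in\cB(D)$ (the limit of indicators is an indicator; cf.\ Lemma~6 of \cite{consistency}), with $\nu(A)=\lim_{n\in\cM'}\nu(B_n)=1/2$. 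Feeding this into the displayed bound through the liminf inequality part~(i) of Lemma~\ref{LemGam} and using $\tr_n^{d+1}/r_n^{d+1}\to1$ yields
$$
\liminf_{n\to\infty}\frac{\Cut_{n,\phi}(\cY_n)}{n^2r_n^{d+1}}\ \ge\ \frac{\sigma_\phi}{2}\,TV({\bf 1}_A)\ \ge\ \frac{\sigma_\phi}{2}\,\MBIS_\nu(D),
$$
the last inequality because $\nu(A)=1/2$ and by (\ref{MBISDdef}); since $\cM'$ still realises the original liminf, this is (\ref{1218f}). The step I expect to need the most care is the elementary bookkeeping showing that greyscale removal preserves the approximate bisection, i.e.\ that $\nu(B_n)\to1/2$; everything else is a direct application of the machinery already assembled for Theorem~\ref{mainthm}.
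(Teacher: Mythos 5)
Your proof is correct and follows essentially the same route as the paper's: apply the greyscale removal of Lemma~\ref{Apartheid} to the optimal bisection, verify $\nu(B_n)\to 1/2$ (you make this step more explicit than the paper does), and then combine (\ref{1015c}) with the compactness and liminf parts of Lemma~\ref{LemGam} and the definition (\ref{MBISDdef}). One small slip worth flagging: if $g_n(\cY_n)$ were of the order $\gamma_n^{-2d-2}r_n^{1-d}$, the within-box cut would be $\Theta(n^2r_n^{d+1})$, not $\gg n^2r_n^{d+1}$, so what the argument actually delivers is $g_n(\cY_n)=O(\gamma_n^{-2d-2}r_n^{1-d})$ rather than $o(\cdot)$ --- but this is exactly what the paper uses, and it suffices for (\ref{1013e}) via (\ref{gammacond}).
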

\begin{proof}
We argue  similarly to the proof in Section \ref{seclo}.
% Let $\eps >0$.  Suppose
%there exists an infinte subsequence $\cN \subset \N$, and
%that  
For each $n \in \N$ let  $\cY_n$ be a bisection
 of $\X_n$ (i.e. a subset with
 $\lfloor n/2 \rfloor$ elements) that achieves
the minimum in the definition (\ref{0107a}).
% such that
%\bea
%\frac{\Cut_{n,\phi}(\cY_n)}{n^2 r_n^{d+1}} <
% (\sigma_\phi/2)\MBIS_\nu(D) - \eps.
%\label{1018h}
%\eea
Define the boxes $Q_{i,n}$ as in Section \ref{seclo},
 and define black, grey and white
boxes as we did there.
By Lemma \ref{Apartheid}, there is a  set $\cY'_n$ satisfying
(\ref{1013e}) and inducing no grey boxes, such that
 $$
\MBIS(G_\phi(\cX_n,r_n)) =
\Cut_{n,\phi}(\cY_n) \geq 
\Cut'_{n,\phi}(\cY'_n) \geq Z_n, 
$$
where $Z_n$ denotes the contribution to $ \Cut'_{n,\phi}(\cY'_n)$
from edges with exactly one endpoint in $B_n$, and 
 $B_n$ denotes the union of the black boxes induced by $\cY'_n$.
%and let $W_n:= D \setminus B_n$. With $Z_n$ defined as before, we have
Then by
% (\ref{1018h}) and
 (\ref{1015c}),
\bea
%(\sigma_\phi/2) \MBIS_{\nu}(D) - \eps > (1+o(1))  
% \left( \frac{\tr_n}{r_n} \right)^{d+1} (1/2) TV_{\phi,\tr_n}({\bf 1}_{B_n}).
%
\limsup_{n \to \infty} \left( \frac{
\MBIS(G_\phi(\cX_n,r_n))}{ n^2 r_n^{d+1}} \right) \geq
(1/2)
\limsup_{n \to \infty} TV_{\phi,\tr_n}({\bf 1}_{B_n}),
\label{0107d}
\eea
and the left side of (\ref{0107d}) is finite by Lemma \ref{lembisub}.
By the compactness property (Lemma \ref{LemGam} (iii)), 
for any infinite subsequence $\cN \subset \N$, we may 
find an infinite  subsequence $\cN' \subset \cN$ such
that as $n \to \infty$ through $\cN'$, 
the fucntions
${\bf 1}_{B_n} $ converge in $L^1(\nu)$ to a limit, necessarily of the
form ${\bf 1}_B$ for some $B \in \cB(D)$.

Using (\ref{1013e}) and
the fact that the original $\cY_n$ was a bisection, we have that
$n^{-1} |\Y'_n| \to 1/2$. Then using
(\ref{Cher1}), (\ref{Cher2}) and the fact
 that we take $ \gamma_n \to 0$ so
the proportion of black vertices in white boxes or white vertices in black
boxes vanishes, we have that
$\nu(B_n) \to 1/2$. 
   Therefore $\nu(B)=1/2$. 

By the definition in (\ref{MBISDdef}),  and
 the liminf lower bound from Lemma \ref{LemGam} (i), we have
as $n \to \infty$ through $\cN'$ that
$$
(\sigma_\phi/2) \MBIS_{\nu}(D) 
\leq
(\sigma_\phi/2) TV ({\bf 1}_B)  \leq \liminf_{n \to \infty} (1/2) 
TV_{\phi,\tr_n} ({\bf 1}_{B_n}), 
$$
and since (\ref{0107d}) still holds with $\limsup$ replaced
by $\liminf$ on both sides, we thus obtain (\ref{1218f}).
%
%Hence the assumption at (\ref{1018h}) must fail, and
% hence
%$$
%\liminf_{n \to \infty} 
%\frac{\Cut_n(\cY'_n)}{n^2 r_n^{d+1} } \geq \MBIS_{\nu}(D) .
%$$
%Combined with (\ref{1018g}) this gives us 
%(\ref{1218f}) follows.
\end{proof}

{\bf Acknowledgement.}
We thank 
 Nicol\'as Garc\'ia Trillos and Ery Arias Castro for
answering some questions regarding their work on
this subject.

%Tobias M\"uller's research was partially supported
%by NWO grants 639.032.529 and 612.001.409. 
%Mathew Penrose's
%research was supported by a STAR visitor grant, by 
%NWO visitor grant 040.11.532, and by the Department
%of Mathematics at Utrecht University.

The research leading to this paper  was  partially carried out
during an extended visit of the second author to Utrecht University.
He thanks the Department of Mathematics at Utrecht University for
its hospitality.

{}


\begin{thebibliography}{}

\bibitem{APP}
Arias-Castro, E., Pelletier, B. and Pudlo, P. (2012).
The normalized graph cut and Cheeger constant: from
discrete to continuous. {\em Adv. Appl. Probab.}
{\bf 44}, 907--937.

\bibitem{Benjamini}
Benjamini, I. and Mossel, E.
(2003).
On the mixing time of a simple random walk
on the super critical percolation cluster.
{\em Probab. Theory Relat. Fields}
{\bf 125}, 408--420.

\bibitem{BHH}
Beardwood, J.,  Halton, J.H. and Hammersley, J. M.  (1959).
The shortest path through many points.
{\em Proc. Cambridge Philos. Soc.} {\bf 55}, 299--327.

\bibitem{Bhatt}
Bhatt, S.N. and  Leighton, F.T. (1984).
A framework for solving VLSI graph layout problems.
{\em J. Comput. System Sci.} {\bf 28}, 300--343.

\bibitem{Bill}
Billingsley, P. (1999).
{\em Convergence of Probability Measures.}
2nd ed., Willey.

\bibitem{Braides}
Braides, A. (1998).
{\em Approximation of Free-Discontinuity Problems.}
Springer, Berlin.

\bibitem{Buser}
Buser, P. (1982).
A note on the isoperimetric constant.
{\em Ann. Sci. \'Ecole Norm. Sup. (4)} {\bf 15},
%, no. 2, 
213--230. 

\bibitem{Carlsson}
Carlsson, G. (2009).
Topology and data.
{\em Bull. Amer. Math. Soc. (N.S.)} {\bf 46}, 255--308. 

\bibitem{CCN}
Caselles, V., Chambolle, A. and Novaga, M.  (2010) 
Some remarks on uniqueness and regularity of Cheeger sets.
{\em Rend. Semin. Mat. Univ. Padova} {\bf 123}, 191--201.


%\bibitem{Chalker}
%Chalker, T.K., Godbole, A.P., Hitczenko, P., Radcliff, J. and
%Ruehr, O.G. (1999). On the size of a random sphere of influence graph.
%{\em Adv. Appl. Probab.} {\bf 31}, 596-609.

\bibitem{Cheeger}
Cheeger, J. (1970). 
A lower bound for the smallest eigenvalue of the Laplacian.
{\em Problems in analysis (Papers dedicated to Salomon Bochner, 1969)},
 pp. 195-199. Princeton Univ. Press, Princeton, N.J.

\bibitem{Chung}
Chung, F.R.K. (1997)
{\em Spectral Graph Theory}.
%CBMS Regional Conference Series in Mathematics, 92.
% Published for the Conference Board of the Mathematical Sciences, Washington,
% DC; by the
 American Mathematical Society, Providence, RI.

\bibitem{Cuevas}
Cuevas, A., Fraiman, R.  and Rodríguez-Casal, A.  (2007).
A nonparametric approach to the estimation of lengths and surface areas. 
{\em Ann. Statist.} {\bf 35}, 1031---051. 

\bibitem{Diaz}
D\'{i}az, J., Penrose, M.D., Petit, J. and Serna, M. (2001).
Approximating layout problems on random geometric graphs.
{\em J. Algorithms} {\bf 39}, 78--116.

\bibitem{Diekmann}
Diekmann, R., Monien, B., Preis, R. (1995).
Using helpful sets to improve graph bisections.
 {\em Interconnection networks and mapping and scheduling parallel
 computations} (D.F. Hsu, A.L. Rosenberg and D. Sotteau, eds.)
 57--73,
% (New Brunswick, NJ, 1994),
%DIMACS Ser. Discrete Math. Theoret. Comput. Sci., 21, 
Amer. Math. Soc., Providence, RI, 

\bibitem{Edelsbrunner}
Edelsbrunner, H. and Harer, J.L. (2010).
{Computational topology.  An introduction.} 
American Mathematical Society, Providence, RI.

\bibitem{empirical}
García Trillos, N.  and Slepcev, D.  (2015).
On the rate of convergence of empirical measures in 
$\infty$-transportation distance.
{\em Canad. J. Math.} {\bf 67}, 1358--1383. 

%\bibitem{BHJ}
\bibitem{cloud}
Garc\'ia Trillos, N. and Slep\v{c}ev, D.  (2016).
 Continuum limit of total variation on point
clouds. {\em Arch. Ration. Mech. Anal.} {\bf 220}, 193--241.

\bibitem{estperim}
Garc\'ia Trillos, N., Slep\v{c}ev, D. and von Brecht, J.  (2017).
Estimating perimeter using graph cuts.
{\em Adv. in Appl. Probab.} {\bf  49}, 1067--1090.

\bibitem{consistency}
Garc\'ia Trillos, N., Slep\v{c}ev, D., von Brecht, J., Laurent, T.
and Bresson, X.  (2016).
Consistency of Cheeger and ratio graph cuts.
%Preprint, to appear in
 %{\em Journal of Machine Learning Research} {\bf 17}, 1-46.
 {\em J. Mach. Learn. Res.} {\bf 17} (181), 1--46.

\bibitem{Henrot}
Henrot, A. and Pierre, M. (2005)
{\em Variation et Optimisation de Formes.
Une Analyse G\'eom\'etrique.} 
% Mathématiques & Applications (Berlin) [Mathematics & Applications], 48.
 Springer, Berlin.

\bibitem{Kahle}
Kahle, M.  (2014). 
Topology of random simplicial complexes: a survey.
{\em Algebraic topology: applications and new directions}, 201-221,
{\em Contemp. Math.}, {\bf 620}, Amer. Math. Soc., Providence, RI,

\bibitem{Kiwi}
 Kiwi, M. and Mitsche, D. (2018)
Spectral gap of random hyperbolic graphs and related parameters.
{\em Ann. Appl. Probab.} {\bf 28},  941--989.

\bibitem{Leighton}
 Leighton, T., Shor, P.  (1989)
 Tight bounds for minimax grid matching with applications to the average case
 analysis of algorithms. {\em Combinatorica} {\bf 9}, 161--187. 

\bibitem{RGG}
Penrose, M. (2003) {\em Random Geometric Graphs.}
Oxford University Press.

\bibitem{LongMST}
Penrose, M.D. (1997).
The longest edge of the random minimal spanning tree. 
{\em Ann. Appl. Probab.} {\bf 7}, 340--361. 

\bibitem{VertOrd}
Penrose, M.D. (2000) Vertex ordering and partitioning
problems for random spatial graphs.
{\em Ann. Appl. Probab.} {\bf 10}, 517--538.

\bibitem{ShorY}
Shor, P. W.  and Yukich, J. E.  (1991).
Minimax grid matching and empirical measures.
{\em Ann. Probab.} {\bf 19}, 1338--1348.

\bibitem{vonL}
von Luxburg, U.
(2007) A tutorial on spectral clustering.
 {\em Stat. Comput.} {\bf 17} 395--416.

%\bibitem{Will}
%Williams, D. (1991) {\em Probability with Martingales}.
%Cambridge University Press.

%
\end{thebibliography}
\end{document}